\documentclass[12pt,a4paper]{amsart}
\usepackage[utf8]{inputenc}
\usepackage{amsmath, amssymb, amsfonts, amsthm}
\usepackage{graphicx} 
\usepackage{xcolor}
\usepackage{hyperref}
\usepackage{geometry}
\usepackage{quiver}
\usepackage[shortlabels]{enumitem}

\theoremstyle{plain}
\newtheorem*{thm*}{Theorem}
\newtheorem{thm}{Theorem}[section]
\newtheorem*{lemma*}{Lemma}
\newtheorem{lemma}[thm]{Lemma}
\newtheorem{pro}[thm]{Proposition}
\newtheorem*{pro*}{Proposition}
\newtheorem{cor}[thm]{Corollary}
\newtheorem*{cor*}{Corollary}
\newtheorem*{con*}{Question}
\newtheorem{con}[thm]{Question}

\theoremstyle{definition}
\newtheorem{df}[thm]{Definition}
\newtheorem*{df*}{Definition}

\theoremstyle{remark}
\newtheorem{rem}[thm]{Remark}
\newtheorem*{rem*}{Remark}
\newtheorem{ex}[thm]{Example}
\newtheorem*{ex*}{Example}

\newcommand{\multi}[1]{{\underline{\eta}\phantom{}_{#1}}} 
\newcommand{\mzeta}{{\underline{\zeta}}} 
\newcommand{\cvar}{{c_\bullet}} 
\newcommand{\svar}{{s_\bullet}} 
\newcommand{\tvar}{{t_\bullet}} 
\newcommand{\tvarr}{{\underline{t}}} 
\newcommand{\xvar}{{\underline{x}}} 
\newcommand{\avar}{{\underline{a}}} 
\newcommand{\rvar}{{\underline{r}}} 
\newcommand{\ThT}[1]{{\operatorname{Th}}^T_{#1}} 
\newcommand{\Thom}[1]{{\operatorname{Th}}_{#1}} 
\newcommand{\KazT}[1]{S_{#1}} 
\newcommand{\Slocus}[2]{\Sigma^{S}_{#1}(#2)} 
\newcommand{\Tlocus}[2]{\Sigma^{T}_{#1}(#2)} 
\newcommand{\Qlocus}[2]{\Sigma_{#1}(#2)} 
\DeclareMathOperator{\codim}{codim}
\DeclareMathOperator{\scodim}{scodim}
\DeclareMathOperator{\tcodim}{tcodim}

\newcommand{\Strato}[2]{\Hilb_{#1}(#2)}
\newcommand{\Strat}[2]{\overline{\Hilb_{#1}(#2)}}
\newcommand{\StratF}[2]{\mathcal{H}\hspace{-0.25ex}\mathit{ilb\/}_{#1}(#2)}%

\newcommand{\Aut}{\operatorname{Aut}}

\newcommand{\coh}{\operatorname{H}}
\newcommand{\id}{\operatorname{id}}

\newcommand{\Hilb}{{\operatorname{Hilb}}}
\newcommand{\supp}{\operatorname{supp}}
\newcommand{\Spec}{\operatorname{Spec}}
\newcommand{\Der}{\operatorname{Der}}
\newcommand{\Hom}{\operatorname{Hom}}
\newcommand{\III}{I\!I\!I}

\newcommand{\PP}{\mathcal{P}}

\newcommand\K{\mathcal{K}}
\newcommand\A{\mathcal{A}}
\newcommand\E{\mathcal{E}}
\def\O{\mathcal{O}}

\newcommand{\TT}{\mathbb{T}}

\newcommand{\NN}{\mathbb{N}}
\newcommand{\QQ}{\mathbb{Q}}
\newcommand{\CC}{\mathbb{C}}
\newcommand{\Aa}{\mathbb{A}}

\newcommand{\m}{\mathfrak{m}}

\newcommand{\kk}{\Bbbk}

\newcommand\proto[2]{\theta_{#1}^{#2}}

\newcommand{\T}{\Spec A}
\newcommand{\G}{G}
\newcommand{\B}{\mathcal{B}}
\newcommand{\GL}{\operatorname{GL}}
\newcommand{\Iso}{\operatorname{Isom}_S}
\newcommand{\onto}{\twoheadrightarrow}

\title{Degenerations of multisingularities and Artin algebras}

\author{Jakub Koncki}
\address{Institute of Mathematics, University of Warsaw, Warsaw, Poland}

\author{Rich\'ard Rim\'anyi}
\address{Department of Mathematics, UNC Chapel Hill, Chapel Hill, NC, USA}
\curraddr{HUN-REN R\'enyi Institute of Mathematics, Budapest, Hungary}

\begin{document}

\begin{abstract}
We study the degeneration hierarchy of commutative, associative, finite-dimensional complex Artin algebras. Instead of studying degenerations in the Hilbert scheme, we introduce a singularity-theoretic notion of degeneration based on the correspondence between singularities of stable map germs and local algebras. This leads to a natural partially ordered set, the {\em stable hierarchy}, in which one algebra degenerates to another if nearby singularities realize the latter. 

Our first main result is that, in a wide range of dimensions, this hierarchy can be determined purely from symmetry data, namely from the automorphism groups of the algebras or singularities. The key tool is the theory of certain equivariant characteristic classes called Thom polynomials of multisingularities, established by Kazarian. Suitable substitutions into these polynomials completely characterize the hierarchy. As a consequence, the computation of degeneration posets becomes algorithmic in nature. 

In our second main result, we prove that our singularity-theoretic hierarchy extends the algebraic hierarchy obtained from deformation theory. While deformation theory requires the dimension (rank) to be fixed, our hierarchy generalizes this framework by comparing algebras of varying dimensions.

\end{abstract}

\maketitle

\section{Introduction}

Consider commutative, associative, finite-dimensional complex Artin algebras. In small dimensions there are finitely many isomorphism types, while starting in dimension $7$ continuous families (moduli) appear.

In this paper we study the degeneration hierarchy of such algebras using methods from { singularity theory} and {equivariant geometry}.

Our first key idea is to define the hierarchy via singularity theory, rather than via degenerations in the Hilbert scheme. The main tool is the relationship between sin\-gu\-la\-ri\-ties and local algebras, or equivalently, between multisingularities and Artin algebras, which we recall in Section~\ref{sec:singularity}. Informally, we declare that an algebra $Q$ degenerates to an algebra $R$ if, for any stable (multi)singularity with local algebra $R$, there exist arbitrarily nearby points with (multi)singularities whose local algebra is $Q$ (see Figure~\ref{fig:Whitney_umbrella}). The resulting partially ordered set will be called the stable hierarchy.

\begin{figure}
\[
    \begin{tikzpicture}[scale=.45, baseline=10]
        \draw[thick] (-5,0) to (5,0);
        \draw[thick] (-6,6) parabola bend (-5,0) (-4,4);
        \draw[thick] (4,6) to[out=-88,in=97] (4.2,4.3);
        \draw[thick] (5,0) parabola bend (5,0) (6,4);
        \draw[thick, dotted] (5,0) parabola bend (5,0) (4.2,4.3);
        \draw[thick] (-6,6) -- (6,4);
        \draw[thick] (-4,4)-- (4,6);
        \draw[thick] (0,0) -- (0,5);
          \draw[thick, blue,->] (-7,3) to[out=-20,in=200]
          (-0.1,2.8);
          \draw[blue] (-9.8,3) node {$\CC\oplus \CC$ points};
          \draw[blue] (0,-1) node {$\CC[t]/(t^2)$ point};
           \draw[blue] (0,0) node {$\bullet$};
    \end{tikzpicture}
    \]
    \caption{The so-called Whitney umbrella singularity $(x,y)\mapsto (x^2,xy,y)$ is stable. It shows that the algebra $Q=\CC \oplus \CC$ degenerates to the algebra $R=\CC[t]/(t^2)$.}
\label{fig:Whitney_umbrella}
\end{figure}

The second key idea is that this hierarchy can be determined purely from symmetry data, namely from the automorphism groups of singularities and algebras. Within the framework of equivariant geometry, this information allows one to compute certain cohomological characteristic classes---Thom polynomials of multisingularities. The general theory of such polynomials was pioneered by Ronga, Herbert, Kleiman, Piene, and it was established by Kazarian \cite{KazaMulti}; for recent advances see  \cite{BSzmult,  TOmult, JKRRmult}. We prove that, in a wide range of dimensions, the vanishing of appropriate substitutions into these polynomials completely determines the hierarchy.

As a consequence, computing poset structures such as those in Figures~\ref{fig:galaxy}, \ref{fig:Hierarchy_of_mono_6}, and \ref{fig:splits} becomes, in principle, a matter of computational power. This constitutes our first main result. 

While the foundational and computational techniques in Thom polynomial theory are widely studied 
by experts 
of global singularity theory, 
proving `folklore' statements with precision (like Proposition~\ref{cor:Der}), as well as coding the algorithms to fast computer algebra systems are other key contributions of our work.

\medskip
Our singularity theory version of algebra-hierarchy is closely related to degenerations. In Section~\ref{sec:comparison} we prove our second main result stating that, when restricted to algebras of fixed dimension (rank), the stable hierarchy coincides with the classical degeneration partial order arising from deformation theory. For instance, Figure \ref{fig:Hierarchy_of_mono_6} depicts degenerations of local algebras of dimension 6. This comparison makes it possible to apply techniques from singularity theory in deformation theory and vice versa. In particular, degenerations of algebras of dimension at most 6 can be determined algorithmically using Thom polynomials. 

To prove the relation between the singularity theory version of hierarchy and degenerations of algebras, we use a stratification of the Hilbert scheme of points by isomorphism type of algebra. This stratification and its properties are 
known to experts, yet we were unable to find a reference in the literature. For completeness, we include an Appendix~\ref{s:deformation} with a definition of a stratum in terms of a representable functor, together with basic properties of the stratification. 

\medskip
\noindent\emph{A note to experts.} Equivariant geometry is a powerful tool for studying the hierarchy of strata in a stratification. One may ask why we do not apply these methods directly to the Hilbert scheme, the natural parameter space of Artin algebras. The key advantage of the singularity-theoretic approach is that the ambient space is a vector space, whose equivariant cohomology ring is a polynomial ring. This stands in sharp contrast to the notoriously complicated local structure of Hilbert schemes, e.g. \cite{JJPathologies,Szach, JJPath2,Oszer}. The price to pay for this simplification is that we must work with \emph{stable} singularities, and hence possibly with high degree polynomials in many variables.

\medskip

\medskip


Our study is also motivated by these connections: (1) the notion of a \emph{concise tensor} is closely related to Artin algebras, and the corresponding hierarchy problem is relevant in geometric complexity theory  \cite{JJtensors}; (2) \emph{lattice homology}, a powerful tool in the study of 3-manifolds, knots, numerical semigroups, and monomial ideals, has recently been applied to Artin algebras, where deformation theory plays a central role \cite{nemethi}.

\bigskip

Throughout the paper, the base field is $\CC$, and we use cohomology with rational coefficients. For maps and germs we use the terminology “source” and “target” in place of “domain” and “codomain.” The \emph{relative dimension} of a map or germ from dimension $m$ to $n$ is defined as $n-m$, and is denoted by $l$.

\bigskip
\noindent\textbf{Acknowledgments.} This work would not have been possible without the generous support,  insight, and constant help of Joachim Jelisiejew. 
The first author was supported by National Science Centre (Poland) grant Sonata 2025/59/D/ST1/00012.
The second author was supported by the U.S. National Science Foundation
under Grant No. 2152309. Any opinions, findings, and conclusions or recommendations expressed in this
material are those of the author(s) and do not necessarily reflect the views of the NSF. We are grateful to Piotr Oszer and Andrzej Weber for discussions on the topic.

\section{Local algebras of small dimension}
\label{sec:algebras}
Consider commutative, associative, finite dimensional, local $\CC$-algebras. Examples include 
\[
A_n=\CC[x]/(x^{n+1}),
\qquad
I_{ab}=\CC[x,y]/(x^a+y^b,xy),
\qquad
\III_{ab}=\CC[x,y]/(x^a,xy,y^b).
\]
One approach to classify such algebras is according to their dimensions, see e.g. \cite{damon_algebras, Mazzola, poonen, yu, becker}. The outcome of these classification efforts (with occasional slight corrections and additions) is available on \cite{TPP}, up to dimension 7. Here is a brief summary. 

The complete list of algebras of dimension 1, 2, 3, 4 is
\[
A_0,
\qquad 
A_1, 
\qquad
A_2, \III_{22},
\qquad
A_3, I_{22}, \III_{23}, C_3:=\CC[x,y,z]/(x,y,z)^2.
\]
The complete list of 5-dimensional algebras is
\begin{center}
\begin{tabular}{ll}
$A_4, I_{23}, \III_{24}, \III_{33}$, &
$C_{41}:=\CC[x,y,z]/(xy,xz,yz,x^2-y^2,y^2-z^2)$,
\\
$B_4:=\CC[x,y]/(x^2,xy^2,y^3)$, &
$C_{42}:=\CC[x,y,z]/(x^2,y^2,z^2,xz,yz)$, \\
$D_4:=\CC[x,y,z,w]/(x,y,z,w)^2$, & $C_{43}:=\CC[x,y,z]/(x^2,y^2,z^3,xy,xz,yz)$.
\end{tabular}
\end{center}
In Figure \ref{Fig:List of 6dim algebras} we list the 25 isomorphism types of algebras of dimension 6.

Starting in dimension 7, the classification includes moduli. For example, in dimension~7 there are two  one-dimensional families of algebras (corresponding to ``general nets of conics'' and ``general pencils of quadrics''), and another 77 discrete types (or a few more if we distinguish some special ones in the families) \cite{TPP}.

A finite subset of algebras (each of dimension $\leq 9$) relevant in singularity theory are called Mather algebras, cf. Definition~\ref{def:Mather_singularity}.

\begin{rem}
In classification theorems one needs to go through various cases carefully, and it is natural to make omissions or mistakes. However, in Section~\ref{sec:classification_vs_identity}, we present an argument that can be interpreted as a {\em numerical verification} of our classification statements.
\end{rem}

\begin{rem}
Another approach to the classification of algebras---not needed in the present paper---is according to their so-called Thom-Boardman type. The classification of algebras with Thom-Boardman types $\Sigma^0$, $\Sigma^{1,1,\ldots, 1,0}$ and $\Sigma^{2,0}$ is discrete, and results for other types are in \cite{dPW}. 
\end{rem}

\section{Singularity Theory}
\label{sec:singularity}
The singularity theory of mappings studies maps between manifolds by examining them locally. Here we present a brief introduction; for a detailed treatment, see the monographs \cite{Golub_book,Varchenko_book, MNB}.
\subsection{Contact singularities}
Let $m$ and $l$ be natural numbers. We denote by $\E(m,m+l)$ the space of  germs $(\CC^m,0)\to (\CC^{m+l},0)$ in the analytic topology. This space carries a natural action of the group of germs of biholomorphic maps
$$\A=\operatorname{Diff}(\CC^m,0)\times\operatorname{Diff}(\CC^{m+l},0)\,.$$
Let $f:M^m\to N^{m+l}$ be an algebraic map between smooth varieties of dimension $m$ and $m+l$.
To a point $x\in M$ we associate a germ
$\eta_x\in\E(m,m+l)$ by choosing local charts around $x$ and $f(x)$. This germ is not uniquely defined as it depends on the choice of charts. Different choices produce germs that differ by the action of the group $\A$. This motivates the following definition.
\begin{df}
    An $\A$-singularity is an orbit of the group $\A$ in $\E(m,m+l)$.
\end{df}
The $\A$-singularities form a stratification of the germ space. It is often convenient to consider a coarser stratification. To a germ $f\in\E(m,m+l)$ we associate its local algebra
\begin{align} \label{w:local_alg}
    Q_f=\CC[[x_1,\dots,x_m]]/(f_1,\dots,f_{m+l})
\end{align}
and stratify the space $\E(m,m+l)$ according to the isomorphism type of the algebra $Q_f$.
\begin{df}
    The contact singularity corresponding to a local algebra $Q$ is the subset of $\E(m,m+l)$ defined by
    \begin{equation}\label{eq:eta_Q}
    \eta_Q=\eta=\{
    f\in \E(m,m+l)| Q_f\simeq Q
    \}\,.
    \end{equation}
\end{df}

\begin{figure}
\begin{center}
\begin{tabular}{llll}
$A_5$ & $=(x^6)$ &   $C_{51}$ & $= (x^2+y^2+z^2,xy,xz,yz) $  \\
& & $C_{52}$ & $= (x^2,y^2,z^2,xy+xz) $\\
$I_{24}$ & $=(x^2+y^4,xy)$ & $C_{53}$ & $= (x^2,xy+z^3,y^2,xz,yz) $\\
$I_{33}$ & $=(x^3+y^3,xy)$ & $C_{54}$ & $= (x^2+yz,xz,y^2,z^2) $\\
$\III_{25}$ & $=(x^2,xy,y^5)$ & $C_{55}$ & $= (xy,xz,y^2,z^2,x^3) $ \\
$\III_{34}$ & $=(x^3,xy,y^4)$ & $C_{56}$ & $= (y^2+x^3,z^2,xy,xz,yz) $\\
& & $C_{57}$ & $= (xy,yz,z^2,y^2-xz,x^3) $ \\
$B_{51}$ & $= (x^2,y^3)$  & $C_{58}$ & $= (x^2,xy,y^2,z^2) $ \\
$B_{52}$ & $= (x^2+y^3,xy^2,y^4) $ & $C_{59}$ & $= (x^2,xy,y^2,xz,yz,z^4) $ \\
$B_{53}$ & $= (x^2,xy^2,y^4) $ & $C_{5,10}$ & $= (x^2,xy,xz,yz,y^3,z^3) $ \\
$B_{54}$ & $= (x,y)^3=(x^3,x^2y,xy^2,y^3) $ & $C_{5,11}$ & $= (x^2,xy,xz,y^2,yz^2,z^3) $ 
\end{tabular}
\end{center}
\begin{center}
\begin{tabular}{ll}
$D_{51}$ & $= (x^2,y^2,z^2,w^2,xy-zw,xz,xw,yz,yw) $ \\
$D_{52}$ & $= (x^2,y^2+z^2,y^2+w^2,xy,xz,xw,yz,yw,zw)  $ \\
$D_{53}$ & $= (x^2,y^2,z^2,w^2,xy,xz,xw,yz,yw)  $ \\
$D_{54}$ & $= (x^2,y^2,z^2,xy,xz,xw,yz,yw,zw,w^3)  $ \\
$E_5$ & $= (x,y,z,w,v)^2=(x^2,y^2,z^2,v^2,w^2,xy,\dots,wv)$
\end{tabular}
\end{center}
\caption{Complete list of 6-dimensional algebras. For brevity we only indicate the ideal in the polynomial ring generated by the obvious generators. Cf. Figure~\ref{fig:Hierarchy_of_mono_6}.}
\label{Fig:List of 6dim algebras}
\end{figure}

 A contact singularity is an $\A$-invariant subset and hence a union of $\A$-singularities. Later, we restrict our attention to well-behaved germs, called stable germs. A contact singularity contains either no stable germs or an open dense $\A$-orbit of stable germs. Consequently, the study of stable $\A$-singularities is equivalent to the study of stable contact singularities.
 \begin{rem} \label{rem:K}
     Contact singularities can also be described as orbits of a certain group 
     action on the space of germs, for details see \cite[Sect. 4]{MNB}.
 \end{rem}
 It is standard practice to identify contact singularities corresponding to the same algebra $Q$ and number $l$ but to different values of $m$. This approach is motivated by a study of the suspension map
 $$\sigma:\E(m,m+l)\to \E(m+1,m+1+l)$$
 defined by $\sigma(f)=f\times \id$. The local algebras of a germ $f$ and its suspension $\sigma(f)$ coincide. In particular, the image of any contact singularity is contained in a contact singularity. Moreover, the suspension map $\sigma$ is transversal to all contact singularities, cf. \cite[Lem. 7.8]{FRannals}.

\begin{rem}
    In practice, we can replace the vector space $\E(m,m+l)$ of germs, and the group $\A$ with their $N$-jets ($N \gg 0$) to obtain $\E_N(m,m+l)$ and $\A_N$. In this way we obtain an algebraic group acting on a finite dimensional vector space. Our constructions and results do not depend on $N$ as long as $N$ is large enough, hence by abuse of notation we will not write the subscript~$N$.
\end{rem}
In this paper, we deal mostly with contact singularities. We omit the word contact in the notation, referring to them simply as singularities.
\subsection{Multisingularities}
Singularities are a tool to describe local properties of a map. They tell us how the map behaves near a point of the source. However, they are not enough to detect target-local structures, such as multiple points. To approach this problem, one studies multisingularities.
\begin{df}  
     A multisingularity $\multi{}$ is a finite multiset of singularities.  We will use intuitive notation, for example, if $\eta_1$ and $\eta_2$ denote singularities, then $(\eta_1^5,\eta_2^2)$ or simply $\eta_1^5\eta_2^2$ will denote the multiset containing these two singularities with multiplicities 5 and~2.
\end{df}
\begin{df}
    For a nonempty multisingularity $\multi{}=(\eta_1^{a_1},\dots,\eta_k^{a_k})$ define
    $$|\Aut(\multi{})|=a_1!\cdot a_2!\cdot\ldots\cdot a_k!\,.$$
\end{df}
    The empty set is a multisingularity. A singularity can be regarded as multisingularity of cardinality one.
\begin{rem}
    A multisingularity is a multiset, not an ordered sequence, even if we often write $\multi{}=(\eta_1,\dots,\eta_k)$ tacitly introducing an order.
\end{rem}

Contact singularities correspond to local algebras. Multisingularities can be identified with products of such algebras. In particular every finite dimensional algebra determines a multisingularity.
\begin{df}
    Let $Q$ be a finite dimensional algebra. Then $Q$ is isomorphic to a product of local algebras
    $$Q\simeq Q_1\times Q_2\times\dots\times Q_k\,.$$
    The multisingularity corresponding to the algebra $Q$ is the multiset $\multi{}=(\eta_1,\dots,\eta_k)$, where $\eta_i$ is the multisingularity corresponding to the local algebra $Q_i$.
\end{df}
A multisingularity corresponds to a point in the target of a map (cf. Definition~\ref{def:loci} below). There is also a notion of multisingularity corresponding to a point in the target of a map with a chosen preimage.
\begin{df}
    An S-multisingularity (source-multisingularity) is a multisingularity $\multi{}$ with a distinguished element, i.e. a pair $(\eta,\multi{})$ such that $\eta$ is a singularity, $\multi{}$ is a multisingularity and $\eta\in \multi{}$.
\end{df}
    We presented a contact version of the definition of a multisingularity. There is an alternative approach using multigerms. The space of multigerms
    $$\bigsqcup_{i=1}^k (\CC^m,0)\to(\CC^{m+l},0)$$
    admits a natural action of the group $\A'=\Sigma_k\ltimes\operatorname{Diff}(\CC^m,0)^k\times\operatorname{Diff}(\CC^{m+l},0)$, where $\Sigma_k$ denotes the permutation group. We say that two germs are $\A$-equivalent if they lie in the same orbit. A multisingularity may be interpreted as an $\A'$-invariant subset of multigerms with a given multiset of local algebras. 

\subsection{Stable germs and maps}
Standard constructions in singularity theory, such as universal counting formulas, are expected to only be valid for maps that are stable under perturbations. Intuitively, we require that any map $g$ closeby to $f$ is equivalent to it: $g= \phi \circ f\circ \psi^{-1}$ for diffeomophisms $\psi, \phi$ of the domain and co-domain. This condition is reasonable over the reals, but not over the complexes (think of, for example, maps from a compact manifold to $\CC^n$). Experience shows that for complex maps the `target-local version' of perturbation and stability are the right substitutes:

\begin{df}[Def. 3.4 in \cite{MNB}]
 An unfolding of a map germ $g:(\CC^m,S)\to (\CC^{m+l},0)$ is a germ $G:(\CC^{m}\times \CC^d , S \times \{0\}) \to (\CC^{m+l} \times \CC^d,0)$ of the form $(\tilde{g}(x,u),u)$ with $\tilde{g}(x,0)=g$. The unfolding $G=g \times \id_{\CC^d}$ is called trivial. If all unfoldings of a germ are equivalent (via the natural equivalence on unfoldings) to a trivial unfolding, then it is called stable.
\end{df}

\begin{df}
A finite, algebraic map $f:M^m \to N^{m+l}$ is called stable if all its induced germs $f: (M,f^{-1}(y)) \to (N,y)$ are 
stable. 
\end{df}
\begin{rem} \label{rem:closed}
    By definition, stable maps are finite, thus closed. In particular, for any $X\subset M$ and $Y\subset N$ we have
    $$
    f\Big(\overline{X}\Big)=\overline{f(X)}\,,\qquad f^{-1}\Big(\overline{Y}\Big)=\overline{f^{-1}(Y)}\,.
    $$
\end{rem}

\begin{df}
  Let $Q$ be a local algebra, $l$ a natural number and $\eta$ the corresponding singularity. We say that $Q$ (or $\eta$) occurs for $l$ if the singularity $\eta$ contains a stable germ, i.e. if for some $m\in \NN$ there exists a stable germ $f\in \E(m,m+l)$ whose local algebra is isomorphic to $Q$. \\
We say that multisingularity occurs for $l$ if all its elements occur for $l$.  
\end{df}

\begin{ex}
     The algebra $\CC[x,y,z]/(x,y,z)^2$ occurs for $l\ge 3$. 
\end{ex}
\begin{pro}
    Suppose that a multisingularity $\multi{}$ occurs for some $l_0$. Then it occurs for all $l\ge l_0$.
\end{pro}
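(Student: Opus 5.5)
Since a multisingularity occurs for $l$ exactly when each of its elements occurs for $l$, it suffices to treat a single local algebra $Q$. By induction it is enough to show: if $Q$ occurs for $l$, then $Q$ also occurs for $l+1$. So fix a stable germ $f\in\E(m,m+l)$ with $Q_f\simeq Q$. The plan is to produce a stable germ in $\E(m',m'+l+1)$ with the same local algebra, where $m'$ is allowed to differ from $m$ (suspension is available, but we are free to change $m$ as well).

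The tool I would use is Mather's characterization of stable germs through the miniversal unfolding of the contact class. A germ $g:(\CC^{m},0)\to(\CC^{m+l},0)$ is stable if and only if it is $\A$-equivalent to a versal unfolding of a minimal-dimension germ with the same local algebra. Concretely, the local algebra $Q$ occurs for $l$ if and only if
\[
\dim_\CC Q\cdot l\;\ge\;\dim_\CC \big(\mathfrak m\,\theta(\phi)/T\K_e\phi\big)-\text{(correction)},
\]
that is, if the number of unfolding parameters needed to make a versal $\K$-unfolding of a genus germ $\phi:(\CC^{a},0)\to(\CC^{a+l},0)$ with $Q_\phi\simeq Q$ can be absorbed into the source dimension. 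Rather than relying on a numerical criterion, I would argue directly. Take the genus germ $\phi$ with $a=\dim\mathfrak m/\mathfrak m^2$ of $Q$ and target $\CC^{a+l}$, and form $\phi'=(\phi,0):(\CC^a,0)\to(\CC^{a+l+1},0)$. Then $Q_{\phi'}=Q_\phi=Q$, since the extra component is $0$ and generates nothing.

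The key step is to compare the $\K_e$-codimensions (the normal spaces $\theta(\phi)/T\K_e\phi$). We have $\theta(\phi')=\theta(\phi)\oplus\O_a$, where $\O_a$ is the new coordinate direction. On that summand, $T\K_e\phi'$ contains $\phi^*\mathfrak m_{a+l+1}\cdot\O_a$, which equals the ideal generated by the components of $\phi$. So the normal space is $N\phi\oplus Q$: it grows by exactly $\dim Q$ compared to $N\phi$. Stability is obtained by unfolding $\phi$ with a basis of $\mathfrak m$ times the normal space, which is versal. Each added unfolding parameter raises the source and the target by one, so the relative dimension stays $l+1$. This gives a stable germ $F'\in\E(m',m'+l+1)$ whose local algebra is still $Q$, because versal unfoldings by parameters in $\mathfrak m$ do not change the fibre over $0$. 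Alternatively, one can start from the stable $f$ itself, form $(f,0)$, and unfold it in the new component by $u_1,\dots,u_{\dim Q}$ times a basis of $Q$ (the monomials $x^\alpha u_j$). Infinitesimal stability then follows from the stability of $f$ together with the fact that $Q$ is spanned by those monomials.

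The main obstacle is verifying infinitesimal stability, i.e. $\theta(F')=tF'(\theta_{\text{source}})+\omega F'(\theta_{\text{target}})$. I would check this through Mather's criterion modulo $\mathfrak m$ of the target: it suffices that $\theta(F')/(tF'+F'^*\mathfrak m\,\theta(F'))$ is spanned by $\omega F'$. On the old summand this reduces to the stability of $f$. On the new summand it reduces to the statement that $\{x^\alpha\}$, the basis of $Q$ used in the unfolding, spans $\O/F'^*\mathfrak m\cdot\O$ together with the constants, which holds by construction.
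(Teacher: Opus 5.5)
Your proposal is correct and follows the paper's route: the paper attributes the statement to the miniversal unfolding construction, concretely the stable unfolding $F(\xvar,\tvarr)=(f(\xvar),t_1h_1(\xvar)+\dots+t_kh_k(\xvar),\tvarr)$ of $(f,0)$, where the $h_i$ represent a basis of the maximal ideal of $Q$. This is your second construction; the extra parameter you attach to the constant $1\in Q$ is redundant but harmless. Your reduction via Mather's criterion modulo $F'^*\mathfrak m$ supplies the stability check that the paper only cites, whereas the displayed ``numerical criterion'' with its unspecified correction term is not a valid criterion and should be dropped, since your argument never uses it.
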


The reason behind the statement in the example and the proposition above is a construction called {\em miniversal unfolding} standard in singularity theory.

\subsection{Singularity loci} 
\label{sec:singular_loci}
In the whole paper we consider only algebraic finite maps between smooth varieties.
Let $f:M^m\to N^{m+l}$ be such a map. As mentioned earlier, for any $x\in M$ we denote by $\eta_x$ a germ in $\E(m,m+l)$ induced by $f$ at $x$. By a slight abuse of notation, we also  write $\eta_x$ for the contact singularity containing this germ. To a point $y\in N$ we associate the multisingularity $\multi{y}$ defined as the {\em multiset}
$$\multi{y}=\{\eta_x\,|\,x\in f^{-1}(y)\}\,.$$
\begin{df} \label{def:loci}
    Let $f:M\to N$ be a finite map between smooth varieties. Let $\eta$ be a singularity and $\multi{}$ a multisingularity containing $\eta$. We define
    \begin{itemize}
        \item  $\eta$-singularity locus in the source
		$$
		\Qlocus{\multi{}}{f}=\{x\in M|\eta_x=\eta\}\subseteq M\,.
		$$
		\item $\multi{}$-multisingularity locus in the target
		$$
		\Tlocus{\multi{}}{f}=\{y\in N|\multi{y}=\multi{}\}\subseteq N\,.
		$$
		\item $(\eta,\multi{})$-multisingularity locus in the source: 
		$$
		\Slocus{\eta,\multi{}}{f}=\{x\in M|\eta_x=\eta\,, \multi{f(x)}=\multi{}\}\subseteq M\,.
		$$
	\end{itemize}
\end{df}
We refer to all these subsets as singularity loci. The following statement is a direct consequence of the definition.
\begin{pro}\label{pro:loci}
    Let $f:M\to N$ be a finite map between smooth varieties. For a multisingularity $\multi{}$ and $\eta\in\multi{} $ we have
    \begin{align*}
        f\big(\Slocus{\eta,\multi{}}{f}\big)&=\Tlocus{\multi{}}{f}
        \,,&
        \Slocus{(\eta,\multi{})}{f}&=f^{-1}\big(\Tlocus{\multi{}}{f}\big)\cap \Qlocus{\eta}{f}\,, \\
        f\big(\Qlocus{\eta}{f}\big)&=\bigcup_{\eta\in\mzeta}\Tlocus{\mzeta}{f}
        \,,&
        \Qlocus{\eta}{f}&=\bigcup_{\eta\in\mzeta}\Slocus{\eta,\mzeta}{f}\,.
    \end{align*}
\end{pro}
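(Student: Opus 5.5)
The plan is to unwind Definition~\ref{def:loci} and verify each of the four identities by a double inclusion. Only two facts are used. First, $\eta_x$ depends only on $x$. Second, $\multi{f(x)}$ is the multiset $\{\eta_{x'} \mid x'\in f^{-1}(f(x))\}$, which contains $\eta_x$ because $x\in f^{-1}(f(x))$. Finiteness of $f$ is needed only so that these multisets are finite and therefore are multisingularities at all.

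I will start with the two identities on the right, which are purely set-theoretic rewritings. For the second identity: $x\in\Slocus{\eta,\multi{}}{f}$ means $\eta_x=\eta$ and $\multi{f(x)}=\multi{}$. This is equivalent to $x\in\Qlocus{\eta}{f}$ and $f(x)\in\Tlocus{\multi{}}{f}$, i.e.\ $x\in f^{-1}(\Tlocus{\multi{}}{f})\cap\Qlocus{\eta}{f}$.

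For the fourth identity: a point $x\in\Qlocus{\eta}{f}$ lies in $\Slocus{\eta,\mzeta}{f}$ with $\mzeta=\multi{f(x)}$, and this $\mzeta$ contains $\eta=\eta_x$. Conversely, every $\Slocus{\eta,\mzeta}{f}$ is contained in $\Qlocus{\eta}{f}$ by definition.

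Next come the identities on the left, which involve images.

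For the first identity, the inclusion $f(\Slocus{\eta,\multi{}}{f})\subseteq\Tlocus{\multi{}}{f}$ is immediate. For the reverse inclusion, take $y$ with $\multi{y}=\multi{}$. Since $\eta\in\multi{}=\{\eta_x\mid x\in f^{-1}(y)\}$, there is some $x\in f^{-1}(y)$ with $\eta_x=\eta$. Then $x\in\Slocus{\eta,\multi{}}{f}$ and $f(x)=y$.

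The third identity then follows formally: apply $f$ to the fourth identity, use that images commute with unions, and substitute the first identity with $\multi{}$ replaced by $\mzeta$.

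There is no real obstacle here. The only point requiring care is the surjectivity step in the first identity, where one must use that $\eta$ actually appears in the multiset $\multi{y}$, i.e.\ that it is realized by some preimage point. This is exactly where the hypothesis $\eta\in\multi{}$ (respectively $\eta\in\mzeta$ in the unions) enters. Without that hypothesis the left-hand images would be empty while the target loci need not be.
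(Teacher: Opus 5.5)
Your proof is correct and matches the paper, which gives no written proof and only says the proposition is a direct consequence of Definition~\ref{def:loci}. Your unwinding of the definitions is exactly that argument; the only non-formal step is using $\eta\in\multi{}$ to find a preimage point realizing $\eta$ in the first identity.
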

\begin{rem} \label{rem:germs}
The loci $\Tlocus{\multi{}}{f}$ form a stratification of the target variety $N$. Reasoning similar to \cite[p.~14]{MNB} shows that the germ of this stratification at a point $y\in N$ depends only on the germ $f:(M,f^{-1}(y))\to(N,y)$.
\end{rem}

In singularity theory, the local algebra is usually defined in terms of formal power series, cf. Formula \eqref{w:local_alg}. Instead of power series one may consider the polynomial ring localized at the maximal ideal $(x_1,\dots,x_m)$.
For germs of finite maps, the algebras obtained by these two constructions coincide. This is a consequence of the following standard algebraic fact.
\begin{pro}
	Let $\CC[x_1,\dots,x_m]$ be a polynomial ring and $\m=(x_1,\dots,x_m)$ the maximal ideal at $0$. Let $f_1,\dots,f_{n}\in \m$ be polynomials such that the algebra $\CC[x_1,\dots,x_m]$ $/$ $(f_1,\dots,f_n)$ is a finite dimensional vector space. Then
	$$
	\frac{\CC[x_1,\dots,x_m]_\m}{(f_1,\dots,f_n)} \simeq \frac{\CC[[x_1,\dots,x_m]]}{(f_1,\dots,f_n)}\,.
	$$
\end{pro}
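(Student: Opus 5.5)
The plan is to show that the ideal $I=(f_1,\dots,f_n)$ contains a power of $\m$, and then to identify both quotients with $\CC[x_1,\dots,x_m]/I'$ for a suitable $I'$ built from $I$ and that power of $\m$. Set $P=\CC[x_1,\dots,x_m]$. A caution first: finiteness of $P/I$ does not force $V(I)=\{0\}$, because other points may lie in the zero set. Still, $P/I$ is Artinian, so it decomposes as a finite product of local Artinian rings
\[
P/I\simeq \prod_{\mathfrak p} (P/I)_{\mathfrak p},
\]
indexed by the finitely many maximal ideals containing $I$. Since every $f_i\in\m$, the ideal $\m$ is among them, and the localization $P_\m/IP_\m$ is exactly the factor $(P/I)_\m$. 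This factor is Artinian local with maximal ideal $\m$, so $\m^N(P/I)_\m=0$ for some $N$. In other words, $\m^N P_\m\subset IP_\m$.

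Next I would compute the completion side. The power series ring $\CC[[x]]$ is the $\m$-adic completion of $P_\m$, and it is flat over $P_\m$. Tensoring the short exact sequence $0\to IP_\m\to P_\m\to P_\m/IP_\m\to 0$ with $\CC[[x]]$ therefore gives
\[
\CC[[x]]/I\CC[[x]]\simeq P_\m/IP_\m\otimes_{P_\m}\CC[[x]].
\]
For a finitely generated $P_\m$-module $M$, this tensor product is the completion $\widehat M$.

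It then remains to see that $M=P_\m/IP_\m$ is already complete. This holds because $\m^N M=0$, so the $\m$-adic topology on $M$ is discrete and $\widehat M=\varprojlim M/\m^kM=M$.

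A more elementary route to the same conclusion avoids citing flatness. From $\m^N\subset IP_\m$ one also gets $\m^N\CC[[x]]\subset I\CC[[x]]$, by applying the same containment to the images. Both quotients then equal $P/(I+\m^N)$, because $P_\m/\m^N P_\m\simeq P/\m^N\simeq\CC[[x]]/\m^N$.

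The one delicate step is this containment $\m^N\CC[[x]]\subset I\CC[[x]]$ in the power series ring. I plan to derive it by writing generators of $\m^N$ as $P_\m$-combinations of the $f_i$ and noting that elements of $P\setminus\m$ are units in $\CC[[x]]$. Compared with this, recognizing the localization as a direct factor of the Artinian ring is routine.
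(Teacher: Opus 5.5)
Your proof is correct. The paper gives no proof of this statement: it quotes it as a ``standard algebraic fact'' and uses it at once to identify local algebras of germs with stalks of scheme-theoretic fibres. So there is no competing argument to compare with, and your proposal supplies the missing justification.

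Both of your routes rest on the same key input, which is exactly right. The ring $P_\m/IP_\m=(P/I)_\m$ is Artinian local, hence $\m^N P_\m\subset IP_\m$ for some $N$. You were also right to flag that $V(I)$ may contain points other than $0$, so one must localize rather than claim $\m^N\subset I$ in $P$.

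The second, elementary route is the better one to write down, for three reasons:
\begin{itemize}
\item it is self-contained;
\item it needs only that elements of $P\setminus\m$ are units in $\CC[[x]]$, and that $\CC[[x]]/\m^N\CC[[x]]\cong P/\m^N\cong P_\m/\m^N P_\m$;
\item it shows directly that the natural map $P_\m/IP_\m\to\CC[[x]]/I\CC[[x]]$ is a ring isomorphism, both sides being $P/(I+\m^N)$. This is the compatibility the paper implicitly relies on afterwards.
\end{itemize}

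The first route invokes the heavier fact $M\otimes_{P_\m}\widehat{P_\m}\cong\widehat M$ for finitely generated modules over a Noetherian local ring. Also, flatness of $\CC[[x]]$ over $P_\m$ is not needed for the identification $\CC[[x]]/I\CC[[x]]\cong (P_\m/IP_\m)\otimes_{P_\m}\CC[[x]]$, which follows from right exactness of the tensor product alone.
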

\begin{cor} \label{cor:formal_local}
	Let $f:M\to N$ be a finite map between smooth varieties. Let $y\in N$ and $F_y\subset M$ be the scheme theoretical fiber of $f$ over $y$. \begin{itemize}
	    \item  For $x\in F_y$ the singularity $\eta_x$ corresponds to the stalk of the sheaf $\O_{F_{y}}$ at $x$. 
        \item The multisingularity $\multi{y}$ corresponds to the coordinate ring $\O_{F_{y}}(F_y)$.
	\end{itemize}
\end{cor}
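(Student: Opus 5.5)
The plan is to reduce both bullets of Corollary~\ref{cor:formal_local} to the preceding Proposition by working in local coordinates. First I fix $y\in N$ and a point $x\in F_y$. I choose an affine neighbourhood of $y$ with regular functions $y_1,\dots,y_{m+l}$ forming local coordinates centered at $y$, and an affine neighbourhood $U$ of $x$ with local parameters $x_1,\dots,x_m$ at $x$. Pulling back, the scheme-theoretic fiber $F_y$ is cut out in $U$ by the ideal $(f_1,\dots,f_{m+l})$, where $f_i=y_i\circ f$. Hence
\[
\O_{F_y,x}=\O_{M,x}/(f_1,\dots,f_{m+l}).
\]
Finiteness of $f$ makes $F_y$ a finite scheme, so this stalk is a finite-dimensional $\CC$-algebra.

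Next I pass to completions. Since $\O_{F_y,x}$ is Artinian local, it is $\m$-adically complete, so it equals its completion. That completion is $\widehat{\O}_{M,x}/(f_1,\dots,f_{m+l})$, and $\widehat{\O}_{M,x}\simeq\CC[[x_1,\dots,x_m]]$ because $M$ is smooth. Under this identification the $f_i$ become exactly the components of the germ $\eta_x$ written in the charts. This yields the algebra $Q_{\eta_x}$ of \eqref{w:local_alg} and proves the first bullet. The Proposition above is precisely this comparison in the case $M=\Aa^m$. In general I would reduce to it via an étale map $U\to\Aa^m$ given by the $x_i$, which induces an isomorphism on completions. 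In the analytic setting, I would alternatively note that the analytic local ring and the algebraic local ring have the same completion.

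Finally, for the second bullet, I use that $F_y$ is a finite $\CC$-scheme, hence affine and a disjoint union of its points. This gives
\[
\O_{F_y}(F_y)\simeq\prod_{x\in f^{-1}(y)}\O_{F_y,x}
\]
by the structure theorem for Artinian rings (or the Chinese remainder theorem). By the first bullet, the factors are the local algebras of the $\eta_x$, so this product corresponds to the multiset $\multi{y}$ by the definition of the multisingularity of a finite-dimensional algebra.

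The only delicate step is the passage between the algebraic stalk and the germ defined via charts, that is, making sure the analytic charts used to define $\eta_x$ give the same formal algebra. I expect to handle this by noting that any two choices of charts differ by formal coordinate changes, which induce isomorphisms of the quotient algebras, so it suffices to use algebraic local parameters.
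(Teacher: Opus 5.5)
Your proposal is correct and follows the route the paper intends: the paper states this corollary as an immediate consequence of the preceding Proposition (for an $\m$-primary ideal, the localized quotient agrees with the formal quotient). Your observation that the Artinian local ring $\O_{F_y,x}=\O_{M,x}/(f_1,\dots,f_{m+l})$ equals its completion $\widehat{\O}_{M,x}/(f_1,\dots,f_{m+l})$ is exactly that comparison, done directly on a smooth $M$, so the étale reduction to $\Aa^m$ is unnecessary (it would also need extra care, since the $f_i$ need not be polynomials in the étale coordinates); your second bullet via the product decomposition of the finite scheme $F_y$ is the intended argument.
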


\subsection{Prototypes}
For a nonempty multisingularity that occurs for a natural number $l$ there exists a universal stable germ belonging to it, see \cite[Sect. 9]{Varchenko_book}. 
\begin{df} \label{df:proto_universal}
    Let $l$ be a natural number and $\multi{} =(\eta_1,\dots,\eta_k)$ a multisingularity that occurs for $l$. There exists a stable germ
    $$
    \proto{\multi{}}{l}:\bigsqcup_{i=1}^k (\CC^m,0)\to (\CC^{m+l},0)\,,
    $$
    called prototype of $\multi{}$ for $l$, such that every stable germ $f:\bigsqcup (\CC^n,0)\to (\CC^{n+l},0)$ lying in the singularity $\multi{}$
    is $\A$-equivalent to $\proto{\multi{}}{l}\times\id_{\CC^{n-m}}$, cf. \cite[Thm. on p. 162]{Varchenko_book}.
\end{df}
\begin{ex}
    Let $\eta$ be the singularity corresponding to the algebra $\CC$. The prototype $\proto{\eta}{l}$ is the inclusion of the origin into $\CC^l$. \\
    Let $\zeta$ be the singularity corresponding to the algebra $\CC[x]/(x^2)$. The prototype $\proto{\zeta}{l}$ for $l=0$ and $l=1$ are of the form
    $$\proto{\zeta}{0}(x)=x^2\,,\qquad \proto{\zeta}{1}(x,y)=(x^2,xy,y) \,.$$
    See \cite[Sect. 2.4]{JKRRmult} and \cite[Sect. 4.3]{MNB} for more examples and a general algorithm of constructing prototypes.
\end{ex}
 When the multisingularity $\multi{}$ consists of a single element $\eta$, we call the map $\proto{\eta}{l}$ the prototype of the singularity $\eta$. It follows from \cite[Thm. 3.3]{MNB} that prototypes of singularities can be used to construct prototypes of multisingularities:

\begin{pro} \label{pro:proto_multi}
	Let $\multi{}=(\eta_1,\dots,\eta_k)$ be a multisingularity and $\proto{\eta_i}{l}: V_i \to W_i$ be the prototype of $\eta_i$. The prototype $\proto{\multi{}}{l}$ is given by
	$$\proto{\multi{}}{l}=\bigsqcup_{i=1}^k \proto{\eta_i}{l} \times \id_{\prod_{j\neq i} W_j}
    :\bigsqcup_{i=1}^k \big(V_i\times \prod_{j\neq i} W_j\big) \to  \prod_{i=1}^k W_i\,.$$
\end{pro}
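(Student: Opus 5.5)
We must show that the germ
$$F=\bigsqcup_{i=1}^k \proto{\eta_i}{l}\times \id_{\prod_{j\neq i}W_j}$$
is stable and lies in the multisingularity $\multi{}$. The uniqueness statement in Definition~\ref{df:proto_universal} then identifies it with $\proto{\multi{}}{l}$ up to $\A$-equivalence. The plan has two steps: first compute the multisingularity of $F$ at the origin, then verify stability using the criterion of \cite[Thm.~3.3]{MNB}.

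\emph{Step 1 (multisingularity).} The preimage of $0\in\prod_i W_i$ under $F$ consists of exactly $k$ points, namely the origins of the $k$ source components $V_i\times\prod_{j\neq i}W_j$. At the $i$-th point, the germ is the suspension of $\proto{\eta_i}{l}$ by the identity on $\prod_{j\ne i}W_j$. Suspension does not change the local algebra, since the extra coordinates are killed by the identity components. Hence the local algebra at that point is $Q_{\eta_i}$. Counting the points with their multiplicities, the multiset of singularities at $0$ is $(\eta_1,\dots,\eta_k)=\multi{}$. The relative dimension is $l$ on every component, because $\dim W_i-\dim V_i=l$.

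\emph{Step 2 (stability).} Each branch $\proto{\eta_i}{l}\times\id$ is a stable monogerm, being a trivial unfolding of a stable germ. Mather's multigerm criterion, in the form of \cite[Thm.~3.3]{MNB}, says that a multigerm whose branches are stable is stable if and only if the branches meet in general position. Concretely, the images of the differentials (the analytic strata $\tau(f_i)$, i.e.\ the tangent spaces of the analytic strata of the branches) must be transverse in the target. The analytic stratum of $\proto{\eta_i}{l}$ at $0$ is $\{0\}\subset W_i$, since the prototype is a miniversal stable germ with no trivial factor. Therefore the stratum of the $i$-th branch of $F$ is $\{0\}\times\prod_{j\neq i}W_j$, and its tangent space is $T_i=\bigoplus_{j\ne i}W_j$. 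The codimensions of the $T_i$ add up to $\sum_i\dim W_i=\dim\prod_i W_i$, and $\bigcap_i T_i=0$. So the $T_i$ are in general position, and $F$ is stable.

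The main obstacle is making Step 2 precise: one must confirm that the prototype of a single singularity has trivial analytic stratum. Equivalently, its $\A_e$-tangent space together with $\mathrm{tf}$ fills the normal space without any constant vector fields on $W_i$ coming from source translations. If the prototypes already split off a trivial factor this would fail, so I would argue it from minimality of $m$ in Definition~\ref{df:proto_universal}: a nonzero analytic stratum would give a splitting $\proto{\eta_i}{l}\simeq g\times\id_{\CC}$ with $g$ stable, contradicting minimality. Once this is settled, Step 1 together with the uniqueness part of Definition~\ref{df:proto_universal} (any stable germ in $\multi{}$ is $\A$-equivalent to a suspension of the prototype, and the dimensions match here) gives $\proto{\multi{}}{l}\simeq F$.
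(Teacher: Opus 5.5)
Your proposal is correct and takes the same route as the paper, which simply cites Mather's multigerm criterion \cite[Thm.~3.3]{MNB}. You fill in the details the paper leaves implicit: the local algebras of the suspended branches, the trivial analytic stratum of each mono-prototype (via minimality), the general position of the $T_i$, and the uniqueness of prototypes. The one point worth making explicit is that your computation $\bigcap_i T_i=0$ also shows $F$ has no trivial factor, since $\Tlocus{\multi{}}{F}=\{0\}$; this is what justifies your claim that ``the dimensions match''.
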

\begin{ex}
    Let $l=1$. Let $A_0^3$ be the multisingularity corresponding to the algebra $\CC\times\CC\times\CC$. The prototype 
\[
\proto{A_0^3}{1}\colon (\CC^2_{x_1,y_1},0) \sqcup (\CC^2_{x_2,y_2},0) \sqcup (\CC^2_{x_3,y_3},0) \to (\CC^3,0),
\]
is given by 
\[
  (x_1,y_1)\mapsto (0,x_1,y_1), \ 
  (x_2,y_2)\mapsto (x_2,0,y_2), \ 
  (x_3,y_3)\mapsto (x_3,y_3,0).   
\]
\end{ex}
The following standard fact is crucial for the computation of Thom polynomials, see Section \ref{s:Thom}.
\begin{pro} \label{pro:proto_0}
	Let $\proto{\multi{}}{l}:\bigsqcup\CC^m\to \CC^{m+l}$ be the prototype of a multisigularity $\multi{}$ for $l$. We have $\Tlocus{\multi{}}{\proto{\multi{}}{l}}=\{0\}$.
\end{pro}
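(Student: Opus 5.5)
We have to show that for the prototype $\proto{\multi{}}{l}$ the locus $\Tlocus{\multi{}}{\proto{\multi{}}{l}}$ is exactly $\{0\}$, understood at the level of germs. By Remark~\ref{rem:germs} the stratification by the loci $\Tlocus{\mzeta}{\cdot}$ near a point $y$ depends only on the germ of the map over $y$, so we may work with a small representative of the prototype. The inclusion $0\in\Tlocus{\multi{}}{\proto{\multi{}}{l}}$ is immediate: the fibre over $0$ consists of the $k$ origins, and by construction the germ at the $i$-th origin has local algebra $Q_{\eta_i}$. So the whole content is the reverse inclusion, that no point $y\neq 0$ near $0$ has multisingularity $\multi{}$.

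The approach is to exploit minimality of the prototype. Suppose $y\neq 0$ lies in $\Tlocus{\multi{}}{\proto{\multi{}}{l}}$. The germ of $\proto{\multi{}}{l}$ over $y$ is stable, because stability is an open condition and small representatives of stable germs are stable maps. It also lies in $\multi{}$, so by the universal property in Definition~\ref{df:proto_universal} it is $\A$-equivalent to $\proto{\multi{}}{l}\times\id_{\CC^{r}}$ for some $r\ge 0$. Since source and target dimensions agree with those of the prototype, we must have $r=0$: the germ over $y$ is equivalent to the prototype itself.

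The main obstacle is to derive a contradiction from this, and the natural tool is a codimension count. Suppose $\Tlocus{\multi{}}{}$ contained a point $y\ne0$ arbitrarily close to $0$. The locus $\Tlocus{\multi{}}{\proto{\multi{}}{l}}$ is constructible and $\A$-invariant, and its germ at $y$ is isomorphic to its germ at $0$. If $0$ is not isolated, the locus has positive dimension at $0$, hence also at $y$. Along a positive-dimensional stratum of a stable germ, the germ trivially splits off a $\CC^{d}$ factor with $d>0$: a stable germ along its stratum is a trivial unfolding, by the standard Mather argument on the vector fields tangent to the stratum. Then the germ at $0$ would be $\proto{}{}'\times\id_{\CC^d}$ with $\proto{}{}'$ a stable germ in $\multi{}$ of smaller dimension. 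This contradicts the minimality of $m$ in the prototype, since the universal property forces the prototype to have the smallest source dimension among stable germs in $\multi{}$.

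The step I expect to be delicate is precisely this triviality along the stratum. I would handle it via the miniversal unfolding description of the prototype. One writes $\proto{\multi{}}{l}$ as $(x,u)\mapsto(\tilde g(x,u),u)$, where the unfolding parameters $u$ range over a complement to the extended tangent space. Here the normal space to $\multi{}$ has dimension equal to the number of parameters, so $\Tlocus{\multi{}}{}$ has codimension equal to $\dim$ of the target, i.e. is $0$-dimensional. Equivalently, the target is a transversal slice to the multisingularity stratum, which meets it in a single point.
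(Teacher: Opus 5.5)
The paper gives no proof of this statement; it is quoted as a standard fact. The usual justification, part of which the paper records in Section~\ref{sec:symmetries_summary} ($\codim(\eta_Q\subset\E(m,m+l))=m_0$), is a codimension count. By Mather's (multi)jet transversality for stable germs, the source locus of $\eta$ for the prototype is smooth of codimension $m_0$ in $\CC^{m_0}$, hence a point.

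Your main route is genuinely different and is sound modulo one cited theorem. You use the minimality built into Definition~\ref{df:proto_universal}, and reduce to the theorem that a stable germ is a trivial unfolding along its analytic stratum. This handles multigerms uniformly, with no codimension bookkeeping. To apply the theorem you need the stratum to be smooth at $0$, which follows from the homogeneity in your second paragraph. Be aware that all the content lies in that triviality theorem, which you only name. It requires producing \emph{liftable} vector fields with prescribed value at $0$, not just fields tangent to the stratum, and it is at least as deep as the statement itself.

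Your last paragraph does not prove that triviality step. It is instead a direct proof of the statement by the codimension route, and the count as written is wrong.
\begin{itemize}
\item Write the genotype as $g\colon(\CC^a,0)\to(\CC^{a+l},0)$ and $u\in\CC^d$. The directions $\partial_{u_j}\tilde g|_{u=0}$ span a complement of $T\K_e g+\CC\{e_1,\dots,e_{a+l}\}$ in $\mathcal{O}_a^{a+l}$, not of $T\K_e g$. So there are only $d$ of them, while the target has dimension $a+l+d$.
\item What makes the locus zero-dimensional is that the whole target, with coordinates $(v,u)$, is the base of the $\K_e$-miniversal deformation $\tilde g(x,u)-v$ of $g$. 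Since $g$ has no linear terms, $T\K_e g\subset\mathfrak{m}_a\mathcal{O}_a^{a+l}$. Hence the constants $e_i$ are independent modulo $T\K_e g$, and $\dim \mathcal{O}_a^{a+l}/T\K_e g=a+l+d$.
\item You still need versality to give transversality, so that the locus is smooth of that codimension. This must come from Mather's theory directly, not from Proposition~\ref{pro:codim}, which the paper deduces from the present statement.
\item For multisingularities this route also needs multijets, or a short counting argument on top of Proposition~\ref{pro:proto_multi}. The reason is that a preimage of type $\eta_i$ could a priori lie in a component other than the $i$-th.
\end{itemize}
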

The existence of prototypes allows the local study of singularity loci.
\begin{pro} \label{pro:local}
	Let $\multi{}$ and $\mzeta$ be multisingularities that occur for $l\in\NN$. Let $\proto{\multi{}}{l}$ be the prototype of $\multi{}$ for $l$ and $f:M\to N$ a stable map of relative dimension $l$ . Suppose that $y\in \Tlocus{\multi{}}{f}$. Then
    $$y\in \overline{\Tlocus{\mzeta}{f}}\iff 0\in \overline{\Tlocus{\mzeta}{\proto{\multi{}}{l}}}\,.$$
\end{pro}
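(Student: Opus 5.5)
Since $f$ is stable, the germ $f\colon (M,f^{-1}(y))\to (N,y)$ is a stable multigerm of relative dimension $l$. Its multisingularity is $\multi{y}=\multi{}$ because $y\in\Tlocus{\multi{}}{f}$. By the defining property of the prototype (Definition~\ref{df:proto_universal}), this germ is $\A$-equivalent to $\proto{\multi{}}{l}\times\id_{\CC^{n}}$ for a suitable $n\ge 0$, where $m+n=\dim M$ and $m$ is the source dimension of the prototype. So there are biholomorphic germs $\psi$ of $(M,f^{-1}(y))$ and $\phi$ of $(N,y)\to(\CC^{m+l}\times\CC^n,0)$ with $\phi\circ f\circ\psi^{-1}=\proto{\multi{}}{l}\times\id_{\CC^n}$.

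The plan then has three steps.

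\textbf{Step 1 (locality).} The condition $y\in\overline{\Tlocus{\mzeta}{f}}$ depends only on the germ of the stratification $\{\Tlocus{\mzeta}{f}\}$ at $y$. By Remark~\ref{rem:germs}, this germ depends only on the germ of $f$ at $f^{-1}(y)$. More concretely, for $y'$ near $y$ the fiber $f^{-1}(y')$ lies in a small neighbourhood of the finite set $f^{-1}(y)$, because $f$ is finite and hence closed (Remark~\ref{rem:closed}). Therefore $\multi{y'}$ is computed from the multigerm alone.

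\textbf{Step 2 (invariance).} The multisingularity at a target point is an $\A$-invariant notion: local algebras are preserved under change of charts. Hence $\phi$ carries the germ of $\Tlocus{\mzeta}{f}$ at $y$ onto the germ of $\Tlocus{\mzeta}{\proto{\multi{}}{l}\times\id_{\CC^n}}$ at $0$. Since $\phi$ is a homeomorphism of germs, $y\in\overline{\Tlocus{\mzeta}{f}}$ if and only if $0\in\overline{\Tlocus{\mzeta}{\proto{\multi{}}{l}\times\id}}$.

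\textbf{Step 3 (removing the suspension).} The fiber of $g\times\id_{\CC^n}$ over $(w,t)$ is the fiber of $g$ over $w$, with the same local algebras; this is the same observation as for the suspension map $\sigma$. Hence
$$\Tlocus{\mzeta}{g\times\id_{\CC^n}}=\Tlocus{\mzeta}{g}\times\CC^n.$$
A point $(0,0)$ lies in the closure of $X\times\CC^n$ exactly when $0\in\overline X$. This gives the claimed equivalence.

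The main obstacle is Step 1. One must make sure that target-local closure questions really only see the multigerm at the entire fiber $f^{-1}(y)$, and not points of $M$ far from it. This is where finiteness/properness is used: if $y_k\to y$, then the preimages $f^{-1}(y_k)$ accumulate only at $f^{-1}(y)$.

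A second point to check is that the local algebra at a point near, but not equal to, the base point of the germ is well defined and $\A$-invariant. Corollary~\ref{cor:formal_local} handles this: it identifies the multisingularity with the coordinate ring of the scheme-theoretic fiber, which is intrinsic and preserved under biholomorphisms of source and target.
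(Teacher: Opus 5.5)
Your proposal is correct and follows the paper's own proof: the germ $f\colon(M,f^{-1}(y))\to(N,y)$ is $\A$-equivalent to $\proto{\multi{}}{l}\times\id$, the stratification germ at $y$ depends only on this multigerm (Remark~\ref{rem:germs}), and the trivial factor is then dropped. You spell out details the paper leaves implicit, namely finiteness controlling nearby fibers, $\A$-invariance of local algebras, and the identity $\Tlocus{\mzeta}{g\times\id}=\Tlocus{\mzeta}{g}\times\CC^n$.
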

\begin{proof}
	The germ $f:(M,f^{-1}(y))\to (N,y)$ is $\A$-equivalent to $\proto{\multi{}}{l}\times\id_{\CC^a}$. The stratification by singularity types is defined on the level of germs, cf. Remark \ref{rem:germs}. Thus, $f$ and $\proto{\multi{}}{l}\times\id_{\CC^a}$ induce the same germ of stratification at $y$ and
	$$y\in \overline{\Tlocus{\mzeta}{f}} \iff
	0 \in \overline{\Tlocus{\mzeta}{\proto{\multi{}}{l}\times\id_{\CC^a}}} \iff
	0 \in \overline{\Tlocus{\mzeta}{\proto{\multi{}}{l}}} \,.$$
\end{proof}
\begin{pro} \label{pro:codim}
    Let $\multi{}$ be a multisingularity that occurs for $l\in\NN$. For any stable map of relative dimension $l$ the stratum $\Tlocus{\multi{}}{f}$ is smooth. Its codimension is the dimension of the target space of the prototype $\proto{\multi{}}{l}$.
\end{pro}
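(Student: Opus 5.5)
The plan is to reduce to the prototype, using the local nature of the stratification, and then check that the stratum of the prototype near the origin is exactly the fixed locus of the trivial unfolding directions.

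\textbf{Step 1: localize.} Take $y\in\Tlocus{\multi{}}{f}$. By Definition~\ref{df:proto_universal}, the germ $f:(M,f^{-1}(y))\to(N,y)$ is $\A$-equivalent to $\proto{\multi{}}{l}\times\id_{\CC^a}$, where
\[
\proto{\multi{}}{l}:\bigsqcup(\CC^{m_0},0)\to(\CC^{m_0+l},0)
\]
and $a=\dim N-(m_0+l)$. By Remark~\ref{rem:germs}, the germ at $y$ of the stratification by multisingularity type is carried by this equivalence, a biholomorphism of the targets, to the corresponding germ for $\proto{\multi{}}{l}\times\id_{\CC^a}$ at $0$. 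Smoothness and codimension are preserved by biholomorphisms. So it suffices to show that, near $0$,
\[
\Tlocus{\multi{}}{\proto{\multi{}}{l}\times\id_{\CC^a}}=\{0\}\times\CC^a .
\]
This set is a smooth submanifold of codimension $m_0+l$, the dimension of the target of the prototype.

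\textbf{Step 2: the product structure.} For a point $(w,u)\in\CC^{m_0+l}\times\CC^a$, the fiber of $\proto{\multi{}}{l}\times\id$ over $(w,u)$ is the fiber of $\proto{\multi{}}{l}$ over $w$, times the point $u$. The scheme-theoretic fibers coincide, and so do the local algebras at each preimage point (compare the suspension discussion and Corollary~\ref{cor:formal_local}). Hence $\multi{(w,u)}=\multi{w}$, and
\[
\Tlocus{\multi{}}{\proto{\multi{}}{l}\times\id}=\Tlocus{\multi{}}{\proto{\multi{}}{l}}\times\CC^a .
\]

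\textbf{Step 3: the prototype's stratum.} By Proposition~\ref{pro:proto_0}, $\Tlocus{\multi{}}{\proto{\multi{}}{l}}=\{0\}$, at least as a germ at the origin, which is all that is needed locally. Combining this with Step 2 gives Step 1's claim.

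\textbf{Conclusion and main obstacle.} Since $y$ was arbitrary, $\Tlocus{\multi{}}{f}$ is locally a smooth submanifold of $N$, hence smooth, and its codimension is $m_0+l$ everywhere. The integer $m_0+l$ is intrinsic to $\multi{}$ and $l$: it is the target dimension of the prototype, fixed by uniqueness of the prototype up to equivalence. So the codimension is constant.

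The main subtlety I expect is Step 1. One must make sure that the $\A$-equivalence of multigerms transports the target stratification as germs of sets. This is the content of Remark~\ref{rem:germs}, and it uses that the multisingularity at a nearby target point $y'$ is read off from all preimages of $y'$. Finiteness of $f$ (Remark~\ref{rem:closed}) ensures that for $y'$ near $y$, all preimages of $y'$ lie near $f^{-1}(y)$. Once this is granted, the remaining steps are formal.
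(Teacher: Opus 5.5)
Your proof is correct and follows the paper's argument. You localize at $y$, replace $f$ by $\proto{\multi{}}{l}\times\id_{\CC^a}$ using the prototype equivalence, and read off the stratum germ $0\times\CC^a$ from Proposition~\ref{pro:proto_0}; you just spell out the product step and the role of finiteness behind Remark~\ref{rem:germs}, which the paper leaves implicit.
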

\begin{proof}
    Choose a point $y \in \Tlocus{\multi{}}{f}$. Smoothness at $y$ may be checked in a small analytic neighborhood of $y$. There the germ of $f$ and $\proto{\multi{}}{l}\times\id_{\CC^a}$ coincide. The germ of the stratum $\Tlocus{\multi{}}{f}$ at $y$ is of the form $0\times\CC^a$, cf. Proposition \ref{pro:proto_0}.
\end{proof}

\subsection{Invariants of singularities}
\begin{df}
Let $\multi{}$ be the multisingularity corresponding to the algebra $Q$ and $l\in\NN$.
\begin{itemize}
    \item The dimension of $\multi{}$ is defined as the dimension of $Q$.
    \item The S-codimension and T-codimension ($\scodim_l(\multi{})$ and $\tcodim_l(\multi{})$) of  $\multi{}$ are defined to be the dimensions of the source and target spaces of its prototype for~$l$. 
\end{itemize}
\end{df}
\begin{rem} \label{rem:codim}
    Let $f:M\to N$ be a stable map of  relative dimension $l$. Proposition \ref{pro:codim} shows that $\tcodim_l(\multi{})=\codim(\Tlocus{\multi{}}{f}\subset N)$.
\end{rem}
For a singularity, the notion $\scodim_l$ coincides with its codimension in $\E(m,m+l)$. By definition, we have $\tcodim_l(\multi{})=\scodim_l(\multi{})+l$ for any $\multi{}$, and 
\[
\scodim_l(\eta_1,\ldots,\eta_r)=\sum_{i=1}^r \scodim_l(\eta_i) + (r-1)l,
\qquad
\tcodim_l(\eta_1,\ldots,\eta_r)=\sum_{i=1}^r \tcodim_l(\eta_i).
\]
The dimension $\dim \multi{}$ does not depend on $l$, while the S-codimension and T-codimension do. Construction of prototypes imply the following formula, cf. \cite[p.5]{primer}.
\begin{align}\label{w:scodim}
    \scodim_l(\multi{})&=(\dim(\multi{})-1)\cdot l+b(\eta)
    \,,&
    \tcodim_l(\multi{})&=\dim(\multi{})\cdot l+b(\eta)\,,
\end{align}
For a constant $b(\eta)$ independent of $l$. In Corollary \ref{cor:Der} we provide an explicit formula for this constant.
\subsection{Mather dimensions}
For $l\geq 1$ we define the {\em Mather bound}
    \begin{equation}\label{eq:Mather bound}
    M(l)=\begin{cases} 6l+8 & \text{if } l=1,2,3, \\ 6l+7 & \text{if } l\geq 4. \end{cases}
    \end{equation}
It is a fact (\cite{mather3, mather4}) that there are only finitely many stable singularities with codimension at most $M(l)$ in $\E(m,m+l)$, for any $m$.
Moreover, for any $k\le M(l)$ the subset of $\E(m,m+l)$ corresponding to singularities of codimension at least $k+1$ is of codimension at least $k+1$. For large $m$ the list of singularities with codimension at most $M(l)$
is the same.
\begin{df} 
\label{def:Mather_singularity}
    A (multi)singularity $\multi{}$ is called a Mather (multi)singularity for $l$ if 
    $$\scodim_l(\multi{})\le M(l)\,.$$
    An algebra $Q$ is called Mather algebra if the corresponding multisingularity is Mather.
\end{df}
For $l=1$ there are 32 Mather singularities and 265 Mather multisingularities. For $l\ge 18$ the list of Mather algebras does not depend on $l$. These are precisely all algebras of dimension at most $6$, together with a few of dimension 
$7$, cf.~Formula~\eqref{w:scodim}. \\
For the complete list of local Mather algebras for all $l$, see \cite{TPP}.

Mather singularities are special for several reasons. One of them is that their prototypes are stabilized by a positive 
$\CC^*$-action. In the case of singularities, this was proved by a case-by-case analysis; see \cite[Thm.~7.6]{MNB}. The generalization to multisingularities is obtained using Proposition~\ref{pro:proto_multi}.
\begin{thm} \label{thm:quasihomogenous}
    Let $\multi{}$ be a Mather multisingularity for $l$. There exists a positive $\CC^*$-action stabilizing the prototype $\proto{\multi{}}{l}$.
\end{thm}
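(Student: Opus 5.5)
The plan is to reduce to single singularities, which the paper has already settled, and then combine the actions using Proposition~\ref{pro:proto_multi}. Let $\multi{}=(\eta_1,\dots,\eta_k)$ be a Mather multisingularity for $l$.

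\emph{Step 1: each $\eta_i$ is Mather.} By the additivity formula for $\scodim_l$,
\[
\scodim_l(\eta_i)\le\sum_j \scodim_l(\eta_j)+(k-1)l=\scodim_l(\multi{})\le M(l).
\]
So each $\eta_i$ is a Mather singularity, and \cite[Thm.~7.6]{MNB} gives a positive $\CC^*$-action on the prototype $\proto{\eta_i}{l}:V_i\to W_i$. Concretely, there are positive weights on $V_i$ and on $W_i$ such that $\proto{\eta_i}{l}$ is equivariant. Since the prototype is quasihomogeneous, these weights can be chosen as positive integers.

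\emph{Step 2: build the action on the multigerm.} Take the source $\bigsqcup_i \big(V_i\times\prod_{j\ne i}W_j\big)$ and target $\prod_i W_i$ from Proposition~\ref{pro:proto_multi}. Let $\CC^*$ act on the target diagonally, by the given weights on each factor $W_j$. On the $i$-th source component, let it act by the weights of $V_i$ on $V_i$ and by the weights of $W_j$ on each $W_j$ with $j\ne i$.

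\emph{Step 3: check equivariance.} The $i$-th component of the map is $\proto{\eta_i}{l}\times\id$. Since $\proto{\eta_i}{l}$ is equivariant and the identity factors carry matching weights on source and target, this component is equivariant. All weights are positive, so the action is positive on every source component and on the target. Hence the action stabilizes $\proto{\multi{}}{l}$.

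\emph{Main obstacle.} The only subtle point is consistency: the target carries a single action, but it must be compatible with all $k$ components at once. This works because each factor $W_j$ is acted on by the one fixed weight vector coming from $\proto{\eta_j}{l}$, and the $i$-th component uses that same vector on every $W_j$ with $j\ne i$.

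A repeated singularity ($\eta_i=\eta_j$ with $i\ne j$) causes no difficulty, since we simply use the same weights on both copies. If the definition of the prototype is only up to $\A$-equivalence, we transport the action along the equivalence, which preserves positivity of the weights.

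The substantive input is the case-by-case classification result \cite[Thm.~7.6]{MNB}, which we assume. Everything else is bookkeeping on weights.
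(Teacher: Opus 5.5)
Your proposal is correct and takes the paper's own route: the paper cites \cite[Thm.~7.6]{MNB} for monosingularities and extends it to multisingularities via Proposition~\ref{pro:proto_multi}, which is what your Steps 2--3 do. Your Step 1 (each $\eta_i$ is Mather, by additivity of $\scodim_l$) and the explicit diagonal choice of weights are details the paper leaves implicit, and you have supplied them correctly.
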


\section{Thom polynomials} \label{s:Thom}
For this section fix a positive integer $l\ge 1$.
\subsection{Characteristic classes}
\begin{df}
    For a stable map $f:M\to N$ of relative dimension $l$  let $T_f=f^*TN-TM$ be its virtual normal bundle.
    \begin{enumerate}
        \item The Chern classes $c_\bullet(f):=c_\bullet(T_f)$ are defined by the formula 
        $$1+c_1(f)+\dots=\frac{1+f^*c_1(TN)+\dots+f^*c_n(TN)}{1+c_1(TM)+\dots +c_m(TM)} \in   \coh^\bullet(M)\,.$$
        For a partition $\lambda=(\lambda_1\geq \ldots \geq \lambda_r)$ define $c_\lambda(f)=\prod_{i=1}^kc_{\lambda_i}(T_f)$.
        \item The Landweber-Novikov classes $s_\lambda(f):=s_\lambda(T_f)$ are 
        $$s_{\lambda}(f):=f_*c_\lambda(f) \in \coh^\bullet(N).$$ 
    \end{enumerate}
\end{df}
Let $\cvar=(c_1,c_2,\dots)$ denote a set of variables indexed by natural numbers and $\svar=(s_\lambda)$ a set of variables indexed by partitions.  We consider gradations on the polynomial rings $\QQ[\cvar]$ and $\QQ[\svar]$ given by 
$$\deg(c_t)=t\,, \qquad \deg(s_\lambda)=l+\sum\lambda_i\,.$$
For a stable map $f$ and a polynomial $P\in\QQ[\cvar]$ we define the element $P(f)\in \coh^\bullet(M)$ by the substitution $c_i\to c_i(f)$. Analogously for $W\in\QQ[\svar]$ we define $W(f)\in \coh^\bullet(N)$ by the substitution $s_\lambda\to s_\lambda(f)$
\subsection{Thom polynomials}
One of the key tools of singularity theory are Thom polynomials. They are universal objects encoding characteristic classes of singularity loci for all stable maps.
We present only a brief introduction to the theory, for an overview, see the surveys \cite{primer,OhmotoSurvey}.
The foundational result of the theory is
\begin{thm}
    For every singularity $\eta$ there exists a polynomial $\Thom{\eta} \in \QQ[\cvar]$, called the Thom polynomial of $\eta$, such that for every stable map $f:M\to N$ of relative dimension $l$  we have
	$$\Thom{\multi{}}(f)=[\overline{\Qlocus{\eta}{f}}] \in \coh^\bullet(M)\,. $$
    The notation $[-]$ denotes the fundamental class.
\end{thm}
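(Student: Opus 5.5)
The plan is to construct $\Thom{\eta}$ from the universal stable map. We work with $N$-jets, $N\gg 0$, so that $\E(m,m+l)$ is a finite-dimensional vector space $V$ on which the algebraic group $\A$ (replaced by its jet group) acts. The group $\A$ retracts onto its linear part $\GL(m)\times\GL(m+l)$, so $\coh^\bullet_{\A}(\mathrm{pt})=\coh^\bullet(B\GL(m)\times B\GL(m+l))$, which is the ring of symmetric functions in Chern roots $a_1,\dots,a_m$, $b_1,\dots,b_{m+l}$. The closure $\overline{\eta}\subset V$ of the contact singularity is an $\A$-invariant subvariety. We therefore set $\Thom{\eta}:=[\overline{\eta}]_{\A}\in\coh^\bullet_{\A}(V)\cong\coh^\bullet_{\A}(\mathrm{pt})$, which is the equivariant Poincaré dual.

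Next we show that this class depends only on the quotient $c(b)/c(a)$, i.e.\ on the virtual difference. For this we use the suspension map $\sigma$. Since $\sigma$ is transversal to contact singularities and $\sigma^{-1}(\overline{\eta})=\overline{\eta}$, the classes for $(m,m+l)$ and $(m+1,m+1+l)$ are related by restriction along the inclusion $\GL(m)\times\GL(m+l)\to\GL(m+1)\times\GL(m+1+l)$ that adds the same character to both sides. A standard argument (Damon/Fehér–Rimányi) then says that a class stable under all such restrictions is a polynomial in $c_i=c_i(b-a)$. This gives $\Thom{\eta}\in\QQ[\cvar]$, independent of $m$ once $m$ is large.

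For a stable map $f:M\to N$ we then consider the bundle $J\to M$ whose fiber over $x$ is the space of $N$-jets of germs $(T_xM,0)\to(T_{f(x)}N,0)$. This is the associated bundle of the principal bundle of frames, so its structure group reduces to $\GL(m)\times\GL(m+l)$ acting through $TM$ and $f^*TN$. The map $f$ gives a section $j^Nf:M\to J$ (after choosing local coordinates, which is well defined up to $\A$). The subvariety $\overline{\eta}$ globalizes to a subvariety $\overline{\eta}(J)$, and by construction $\overline{\Qlocus{\eta}{f}}=(j^Nf)^{-1}(\overline{\eta}(J))$. The universal property of equivariant classes then gives $(j^Nf)^*[\overline{\eta}(J)]=\Thom{\eta}(c(f^*TN)/c(TM))=\Thom{\eta}(f)$, provided the section is transversal to the stratification by singularity types.

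The main obstacle is this transversality step, which is where stability is used. Mather's characterization says that $f$ is stable iff $j^Nf$ is transverse to the contact orbits ($\K$-orbits), cf.\ Remark \ref{rem:K}. Since $\overline{\eta}$ is a finite union of $\K$-invariant pieces near stable points, transversality gives equality of fundamental classes with multiplicity one. One also needs $\codim\overline{\Qlocus{\eta}{f}}=\codim\overline{\eta}$, which follows from the same transversality. If $\overline{\eta}$ contains unstable (moduli) regions, those are avoided by $j^Nf$ for dimension reasons, since their codimension exceeds $\dim M$ for a stable $f$. They therefore do not affect the pullback.
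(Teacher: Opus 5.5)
The paper gives no proof of this statement. It quotes it as the foundational theorem of Thom polynomial theory and points to the surveys \cite{primer,OhmotoSurvey}. The fair comparison is therefore with the standard proof in the literature, and that is what you reconstruct. Existence comes from the equivariant class of $\overline{\eta}$ in the jet space, globalized via the jet bundle and pulled back along $j^Nf$; stability enters only through Mather's transversality criterion. Damon's suspension argument then shows that only $c(f^*TN-TM)$ appears. As a sketch this is sound. Contact invariance of $\eta$ enters through the transversality of $\sigma$, which gives the cancellation property: the class for $m+1$, with $a_{m+1}=b_{m+l+1}=\alpha$, equals the class for $m$, independently of $\alpha$. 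You should also say explicitly why one polynomial works for every source dimension. The expression in the $c_i(b-a)$ is unique once $m$ is large compared with the degree, so this compatibility yields a single polynomial valid for every source dimension, including small $m$. That matters because the statement quantifies over all stable maps of relative dimension $l$.

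Two points need tightening.

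\emph{The jet section.} $j^Nf$ is not canonically a section of the vector bundle with fiber the jets $(T_xM,0)\to(T_{f(x)}N,0)$. It is an algebraic section of the pullback of the jet bundle $J^N(M,N)$ along the graph of $f$, whose structure group is the jet group $\A_N$. Since $\overline{\eta}$ is $\A_N$-invariant, $\overline{\eta}(J)$ is defined there. Since $\A_N$ retracts onto $\GL(m)\times\GL(m+l)$, its class is your equivariant class evaluated on $(TM,f^*TN)$.

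\emph{The last sentence.} It is inaccurate as stated. Modular families of contact orbits inside $\overline{\eta}\setminus\eta$ can have codimension $\le\dim M$ and can be met by stable maps once $m$ is large. Orbits of codimension $>\dim M$ are missed because $j^Nf$ is transverse to each orbit individually, not because some union has large codimension. What the argument actually needs is the following.
\begin{enumerate}
\item Stratify $\overline{\eta}\setminus\eta$ into finitely many smooth $\K$-invariant locally closed pieces, for instance by iterating singular loci.
\item At a point of a piece, the $\K$-orbit lies in that piece. So transversality to orbits gives transversality to the piece, and $(j^Nf)^{-1}(\overline{\eta}\setminus\eta)$ has codimension $\ge\codim\eta+1$.
\item Every component of $(j^Nf)^{-1}(\overline{\eta}(J))$ has codimension $\le\codim\eta$, by the dimension bound for intersections in a smooth variety.
\end{enumerate}
Together these give $(j^Nf)^{-1}(\overline{\eta}(J))=\overline{\Qlocus{\eta}{f}}$, generically reduced, and empty when $\Qlocus{\eta}{f}$ is. This equality is a consequence of transversality, not something true ``by construction''. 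Only $(j^Nf)^{-1}(\eta(J))=\Qlocus{\eta}{f}$ holds by construction, for $N$ at least the dimension of the algebra of $\eta$.
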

This result can be generalized to the case of multisingularities. It was established by Kazarian~\cite{KazaMulti} and proved by Ohmoto~\cite{TOmult}.
\begin{thm}[{\cite[Thm. 4.17]{TOmult}}]
    For every multisingularity $\multi{}$ there exist polynomials $\KazT{\multi{}}$ and $\ThT{\multi{}} \in \QQ[s_\bullet]$, such that:
    \begin{enumerate}
    \item
    For every stable map $f:M\to N$ of relative dimension $l$  we have
	$$\ThT{\multi{}}(f)=[\overline{\Tlocus{\multi{}}{f}}]\cdot|\Aut(\multi{})| \in \coh^\bullet(N)\,. $$
    \item
    The polynomial $\KazT{\multi{}}$ is linear. 
    \item 
    The polynomials $\KazT{\multi{}}$ determine $\ThT{\multi{}}$. Let $X$ be a finite set of singularities. Denote by $X_T$ the set of nonempty multi\-sin\-gu\-la\-ri\-ties, consisting of singularities from $X$. To every $\eta\in X$ we associate a formal variable $t_\eta$ and to every $\multi{}\in X_T$ a monomial $t^\multi{}$. Then
    \begin{align*} 
	1+\sum_{\multi{}\in X_T}
	\frac{\ThT{\multi{}}}{|\Aut(\multi{})|}\cdot t^\multi{}=
	\exp \bigg(
	\sum_{\multi{}\in X_T}
	\frac{\KazT{\multi{}}}{|\Aut(\multi{})|}\cdot t^\multi{}
	\bigg) \in \QQ[\svar][[\tvar]]
	\,.
    \end{align*}
    \end{enumerate}
\end{thm}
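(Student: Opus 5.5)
The plan is to reduce everything to universal classes living on a classifying object for stable multigerms of relative dimension $l$, and then read off the three claims from how these classes behave under $f^*$ and $f_*$. Fix a finite set $X$ of singularities that occur for $l$. Work with $N$-jets, $N\gg0$, so that the structure group $\A$ becomes an algebraic group $\A_N$ acting on the finite-dimensional space $\E_N(m,m+l)$. Its maximal compact subgroup is homotopy equivalent to $U(m)\times U(m+l)$, so the equivariant cohomology is $\QQ[\cvar]$ in the virtual normal Chern classes. This already gives the single-singularity Thom polynomial $\Thom{\eta}$: it is the equivariant class of the closure of $\eta$, which is invariant and of codimension $\scodim_l(\eta)$. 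By the pullback of classes along the classifying map of the jet bundle of $f$, this class equals $[\overline{\Qlocus{\eta}{f}}]$ for stable $f$, using transversality, which is a consequence of stability.

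For multisingularities the first step is source classes of S-multisingularities. For $(\eta,\multi{})$ I would consider the $r$-fold multiple point space of $f$, that is, the closure of $\{(x_1,\dots,x_r)\in M^r : f(x_i)=f(x_j),\ x_i\neq x_j\}$. I would then show by induction on $r$ that
\[
[\overline{\Slocus{\eta,\multi{}}{f}}] = \sum_j P_j(\cvar(f))\cdot f^*W_j(\svar(f))
\]
with universal $P_j$ and $W_j$. The mechanism is the standard residual argument. The class of the locus of points sharing an image with a point of type $\zeta$ is computed by $f^*f_*$ applied to the class of the $\zeta$-locus. This overcounts the diagonal contributions, and these are corrected by classes supported on deeper singularity loci. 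The correction terms are again of the same form by induction on $\scodim$, and it is the prototype description (Proposition~\ref{pro:proto_multi}) that makes them universal. Pushing forward with $f_*(c_\lambda\cdot f^*W)=s_\lambda W$, and dividing by the multiplicity of $\eta$ in $\multi{}$, gives the polynomial $\ThT{\multi{}}\in\QQ[\svar]$ with the normalization $|\Aut(\multi{})|$. This establishes claim (1).

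The remaining claims come from an exponential formula. I would define $\KazT{\multi{}}$ through claim (3), taking the logarithm of the generating series, so that claim (3) holds by construction. Everything then rests on proving claim (2), linearity: each cumulant $\KazT{\multi{}}$ must be $f_*$ of a single $\cvar$-polynomial, with no products of $s$-classes. For this I would use a disjoint-union/product argument in the spirit of Kazarian. The generating function $F(f)=1+\sum \ThT{\multi{}}(f)\,t^{\multi{}}/|\Aut|$ is multiplicative: if $f=f_1\sqcup f_2$ is a generic union of two stable maps into the same target, then $F(f)=F(f_1)F(f_2)$. This holds because multisingularity loci of a transverse union are intersections of the loci of the pieces, and the automorphism factors match the multinomial count. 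On the other side, the $s$-classes are additive, $s_\lambda(f)=s_\lambda(f_1)+s_\lambda(f_2)$. A power series in the $s_\lambda$ that is multiplicative under an additive substitution of its variables must be the exponential of a linear form in them, provided enough test maps exist to make the $s_\lambda$ algebraically independent.

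The main obstacle is exactly this independence, together with making the union construction rigorous in the stable, finite, algebraic setting, since a disjoint union must remain stable after a generic translation. I would address it by working universally on the classifying space of stable multigerms (Kazarian's space) rather than with individual maps, where the $s_\lambda$ are independent generators. Alternatively, one can follow Ohmoto's route via the Chern--Schwartz--MacPherson classes of the multijet loci, whose additivity gives linearity directly.
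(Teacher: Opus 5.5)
The paper gives no proof of this statement. It imports it from Kazarian~\cite{KazaMulti} and Ohmoto~\cite{TOmult}.

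Judged on its own, the second half of your outline is sound, but there is a genuine gap in claim~(1). The passage from (1) to (2) and (3) uses the right mechanism. Suppose $1+\sum_{\multi{}}[\overline{\Tlocus{\multi{}}{f}}]\,t^{\multi{}}$ is multiplicative under transverse disjoint union, each $s_\lambda$ is additive, and the $s_\lambda$ are algebraically independent. Then the logarithm is additive and hence linear in the $s_\lambda$ (group-like implies primitive).

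The problem is existence, and your ``standard residual argument'' (Kleiman's iteration) does not go through in this generality:
\begin{itemize}
\item In $f^*f_*[\overline{\Qlocus{\zeta}{f}}]$ the locus $\overline{\Qlocus{\zeta}{f}}$ itself is an excess component, $l$ dimensions larger than expected.
\item Its contribution is governed by an excess bundle defined only over the open stratum.
\item Over deeper strata, notably at points of corank $\geq 2$, the residual scheme to the diagonal is no longer controlled by that bundle. This is why iteration formulas are established only under hypotheses such as corank at most one.
\end{itemize}
Your sentences ``the correction terms are again of the same form by induction on $\scodim$'' and ``the prototype description makes them universal'' restate the theorem rather than prove it. Proposition~\ref{pro:proto_multi} describes $f$ only near a single fibre. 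A correction supported on a deeper locus is the pushforward of some class living on that locus, and nothing local forces it to be globally of the form $\sum_j P_j(\cvar(f))\,f^*W_j(\svar(f))$.

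The missing ingredient is a global universal object from which all the loci are pulled back: Kazarian's classifying space for stable maps of relative dimension $l$. It is built from the classifying spaces of the symmetry groups of singularities, in the spirit of the spectral sequence behind Theorem~\ref{thm:MatherSS}. It carries an $H$-space structure coming from disjoint union. Its rational cohomology, in the relevant range of degrees, is the polynomial ring on the primitive classes $s_\lambda$. With this space, (1) follows by pulling back universal dual classes, and your multiplicativity argument then yields (2) and (3). Your last paragraph does invoke this space, but only to get independence of the $s_\lambda$. Once you use it, the residual induction is unnecessary; without it, (1) is unproved.

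Two smaller corrections:
\begin{itemize}
\item $\coh^\bullet_{U(m)\times U(m+l)}(pt)$ is not $\QQ[\cvar]$. That the class of a contact singularity lies in the subring generated by the quotient Chern classes is Damon's theorem, which you must invoke.
\item Additivity of Chern--Schwartz--MacPherson classes concerns decompositions of a fixed space. By itself it does not give linearity under disjoint union of maps.
\end{itemize}
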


\subsection{Interpolation Method} \label{sec:interpolation}
The theory of global singularities is inherently computational: explicit Thom polynomials often translate into concrete results in enumerative and algebraic geometry, as well as obstruction theory. One of the most efficient methods for computing Thom polynomials is the interpolation method, introduced in \cite{rrtp}. It is based on the observation that Thom polynomials of Mather singularities are uniquely determined by their values on the specific set of test maps---prototypes of singularities. Now we recall a generalization of this method to the case of multisingularities.
\begin{thm}
    Suppose that $\multi{}$ is a Mather multisingularity. The Thom polynomial $\ThT{\multi{}} \in \QQ[\svar]$ is uniquely determined by the following properties.
    \begin{enumerate}
        \item $\ThT{\multi{}}$ is homogeneous of degree $\tcodim_l(\multi{})$.
        \item Let $\proto{\multi{}}{l}:\bigsqcup V_i\to W$ be the prototype of $\multi{}$ and $\TT_\multi{}$ the maximal torus of its stabilizer group. Let $0\in W$ be the origin. We have
        $$\ThT{\multi{}}(\proto{\multi{}}{l})=[0]\cdot |\Aut(\multi{})| \in \coh^{\bullet}_{\TT_\multi{}}(pt)\,.$$
        \item Let $\mzeta \neq \multi{}$ be a multisingularity of codimension at most $\tcodim_l(\multi{})$ and $\proto{\mzeta}{l}$ its prototype. Let $\TT_\mzeta$ be the maximal torus stabilizing $\proto{\mzeta}{l}$. We have
        $$\ThT{\multi{}}(\proto{\mzeta}{l})=0 \in \coh^{\bullet}_{\TT_\mzeta}(pt)\,.$$
    \end{enumerate}
\end{thm}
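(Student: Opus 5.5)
The plan is the standard interpolation argument of \cite{rrtp}, adapted to multisingularities. There are two halves: first show that $\ThT{\multi{}}$ satisfies (1)--(3), then show that any two polynomials satisfying them coincide.

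For \emph{existence}, property (1) follows from Kazarian's theorem together with Remark~\ref{rem:codim}: for a stable map $f$, $\ThT{\multi{}}(f)$ equals $|\Aut(\multi{})|$ times the class of $\overline{\Tlocus{\multi{}}{f}}$, which has codimension $\tcodim_l(\multi{})$. Universality then forces homogeneity of that degree, since otherwise some test map would detect a nonzero component of a different degree. For (2) and (3) we apply Kazarian's theorem in the $\TT_\mzeta$-equivariant setting to the prototype $\proto{\mzeta}{l}:\bigsqcup V_i\to W$. This is legitimate because the equivariant prototype can be approximated by stable maps between finite-dimensional approximations of Borel constructions $E\TT\times_\TT V_i\to E\TT\times_\TT W$. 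Equivariant cohomology of the target is $\coh^\bullet_{\TT}(pt)$, and the fundamental class of $\overline{\Tlocus{\multi{}}{\proto{\mzeta}{l}}}$ is what we evaluate.
\begin{itemize}
\item If $\mzeta=\multi{}$, Proposition~\ref{pro:proto_0} gives $\Tlocus{\multi{}}{}=\{0\}$, which yields (2).
\item If $\mzeta\neq\multi{}$ and $\tcodim_l(\mzeta)\le\tcodim_l(\multi{})$, then $0\notin\overline{\Tlocus{\multi{}}{\proto{\mzeta}{l}}}$. Otherwise, by Proposition~\ref{pro:local} the stratum of $\mzeta$ would lie in the closure of the $\multi{}$ stratum. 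Since that stratum is smooth of codimension $\tcodim_l(\multi{})$ by Proposition~\ref{pro:codim}, $\mzeta$ would have strictly larger codimension, a contradiction. Hence the locus is a $\TT$-invariant closed subset avoiding the origin.
\item To conclude (3) we need the $\CC^*$-action of Theorem~\ref{thm:quasihomogenous}. A closed invariant subset of $W$ that is conical for a positive weight action and avoids $0$ must be empty, because every orbit closure contains $0$. So its class vanishes.
\end{itemize}

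For \emph{uniqueness}, let $D$ be the difference of two solutions. Then $D$ is homogeneous of degree $d=\tcodim_l(\multi{})$ and vanishes on all prototypes of codimension $\le d$, including $\multi{}$ itself. We must show $D=0$ as an element of $\QQ[\svar]$; I read the statement as uniqueness among universal polynomials, i.e.\ classes in the classifying space of stable maps. The key step is a Mayer--Vietoris/Gysin filtration argument on the ``classifying space'' of multisingularities, Kazarian's space $B$, stratified by multisingularities $\mzeta$. Its equivariant cohomology injects into $\QQ[\svar]$ in degrees below the first non-Mather codimension, and this is exactly where the Mather bound enters: strata of codimension $>M(l)$ (respectively the corresponding target bound) do not affect cohomology in degree $\le d$.

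The uniqueness proof itself is an induction over strata in increasing codimension. Suppose $D$ restricts to zero on the union $U_k$ of strata of codimension $<k$. Then $D$ comes, via the Gysin sequence, from classes supported on the codimension-$k$ strata. Its restriction to the normal slice of a stratum $\mzeta$ is the Euler class of the normal representation of $\TT_\mzeta$ times a coefficient, and that restriction is $D(\proto{\mzeta}{l})$. The Euler class is nonzero because the positive $\CC^*$-action (Theorem~\ref{thm:quasihomogenous}) gives nonzero weights; this is the ``Euler condition.'' Hence vanishing on the prototype kills the coefficient.

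Once $k>d$, degree reasons finish: a class of degree $d$ supported in codimension $>d$ vanishes. The main obstacle is making rigorous the claim that the stratified classifying space has cohomology computed by $\QQ[\svar]$ in the relevant range. This requires Kazarian's construction and the Mather finiteness and codimension statement, and identifying restrictions to strata with substitutions into prototypes. Here I would rely on \cite{KazaMulti,TOmult} and the treatment in \cite{rrtp}.
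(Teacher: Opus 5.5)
The paper gives no proof of this statement. It is recalled as the multisingularity version of the interpolation method of \cite{rrtp}, with the foundations left to \cite{KazaMulti,TOmult} (cf.\ \cite[Thm.~8.2]{JKRRmult}). Your outline is the standard argument behind those references. Existence comes from Kazarian's theorem applied to finite-dimensional Borel approximations of the $\TT_\mzeta$-equivariant prototypes. Uniqueness comes from the Gysin induction over Kazarian's classifying space, with the Euler condition supplied by positive weights. The argument is correct in outline, and you have correctly isolated the one genuinely external input: that $\QQ[\svar]$ is the rational cohomology of that classifying space.

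Three points should be tightened.

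First, the sentence saying the cohomology of the stratified space ``injects into $\QQ[\svar]$'' has the arrow backwards. What you actually use, and what your final degree argument says, is the opposite direction. Restriction from $\QQ[\svar]$ to the open union of strata of T-codimension at most $d$ is injective in cohomological degree $2d$, because the complement has codimension $>d$.

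Second, note explicitly that every $\mzeta$ with $\tcodim_l(\mzeta)\le\tcodim_l(\multi{})$ is itself Mather, since $\scodim_l(\mzeta)=\tcodim_l(\mzeta)-l\le\scodim_l(\multi{})\le M(l)$. This is what lets you invoke Theorem~\ref{thm:quasihomogenous} for $\proto{\mzeta}{l}$, both in (3) and for the Euler condition. For (3) alone the positive action is not even needed. If $0\notin Z$, then $[Z]$ restricts to zero on $W\setminus Z\ni 0$, and $\coh^\bullet_{\TT}(W)\to\coh^\bullet_{\TT}(\{0\})$ is an isomorphism.

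Third, in the Gysin step the coefficient lives in $\coh^\bullet(BG_\mzeta;\QQ)$ for the full stabilizer, which is disconnected when $\mzeta$ has repeated components. To conclude it vanishes from $D(\proto{\mzeta}{l})=0$ in $\coh^\bullet_{\TT_\mzeta}(pt)$, you need the injectivity of $\coh^\bullet(BG_\mzeta;\QQ)\to\coh^\bullet(B\TT_\mzeta;\QQ)$. This holds for compact Lie groups with rational coefficients.
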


In conditions (2) and (3)—as well as elsewhere in this paper—an important point is that the Thom polynomial is evaluated on a map (namely the prototype $\theta$) in equivariant cohomology, rather than in ordinary cohomology. Indeed, the ordinary cohomology of both the source and the target of $\theta$ is trivial, whereas the equivariant cohomology rings $\coh^\bullet_{\TT_\multi{}}(pt)$ are isomorphic to polynomial rings. Consequently, substituting the Landweber–Novikov classes of the prototype $\proto{\mzeta}{l}$ into such a polynomial is governed by the weights of the $\TT_\mzeta$-action on the source and target of $\proto{\mzeta}{l}$. These weights are listed on \cite{TPP}. 

In effect, conditions (2) and (3) are linear equations in the coefficients of the Thom polynomial $\ThT{\multi{}}$. Therefore, computing the Thom polynomial of a Mather multisingularity reduces to solving a system of linear equations; which can be efficiently handled by a computer algebra system. \\
There is a version of the interpolation method for the polynomials $\KazT{\multi{}}$ instead of $\ThT{\multi{}}$.
It is usually more convenient for computations, since the polynomials $\KazT{\multi{}}$ are linear. Moreover, it suffices to consider prototypes of singularities as test maps, rather than prototypes of multisingularities.
\begin{thm} \label{thm:interp2}
    Let $\mzeta$ be a Mather multisingularity and $X$ the set of singularities appearing in $\mzeta$. Denote by $X_T$ the set of nonempty multi\-sin\-gu\-la\-ri\-ties, consisting of singularities from $X$. Suppose that to every multisingularity $\multi{}\subset\mzeta$ we associated a linear polynomial $\KazT{\multi{}}$ and that the polynomials $A_\multi{}$ are defined by equation:
      \[
    1+\sum_{\multi{}\in X_T}
	\frac{A_{\multi{}}}{|\Aut(\multi{})|}\cdot t^\mzeta=
	\exp \bigg(
	\sum_{\multi{}\subset \mzeta}
	\frac{\KazT{\multi{}}}{|\Aut(\multi{})|}\cdot t^\multi{}
	\bigg) \in \QQ[\svar,\tvar]
	\,.
    \]
    Assume that:
    \begin{enumerate}
        \item The polynomial $\KazT{\multi{}}$ is homogeneous of degree $\tcodim_l(\multi{})$.
        \item Let $\eta\in X$ be a singularity, $\proto{\eta}{l}$ its prototype and $\TT_\eta$ the maximal torus of the stabilizer group. We have
        $$ \KazT{\eta} (\proto{\eta}{l})=[0] \in \coh^{\bullet}_{\TT_\eta}(pt)\,.$$
        \item Let $\multi{} \subset\mzeta$ and $\eta$ be a singularity (not necessarily in $X$) of codimension at most $\tcodim_l(\multi{})$, such that $\multi{}\neq \eta$. Let $\proto{\eta}{l}$ denote the prototype of $\eta$ and $\TT_\eta$ be the maximal torus of its stabilizer group. We have
        $$A_\multi{}(\proto{\eta}{l})=0 \in \coh^{\bullet}_{\TT_\eta}(pt)\,.$$
    \end{enumerate}
    Then $A_\multi{}=\ThT{\multi{}}$ for every $\multi{}\subset\mzeta$.
\end{thm}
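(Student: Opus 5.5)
The plan is to reduce Theorem~\ref{thm:interp2} to the interpolation theorem for $\ThT{\multi{}}$ stated just before it. I will show that the true Kazarian polynomials $\KazT{\multi{}}$ satisfy (1)--(3), and that (1)--(3) determine the linear polynomials uniquely. Uniqueness is the real content, since existence follows from Kazarian--Ohmoto. Concretely, suppose two families $\{\KazT{\multi{}}\}$ and $\{\KazT{\multi{}}'\}$ both satisfy the hypotheses. I will argue by induction on the multisingularities $\multi{}\subset\mzeta$, ordered by cardinality and then by $\tcodim_l$, that $\KazT{\multi{}}=\KazT{\multi{}}'$.

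The key structural observation is that the exponential formula expresses $A_{\multi{}}$ as
\[
A_{\multi{}}=\KazT{\multi{}}+P_{\multi{}}\big(\KazT{\mzeta'}:\mzeta'\subsetneq\multi{}\big),
\]
where $P_{\multi{}}$ is a polynomial in the Kazarian polynomials of proper sub-multisets. By induction, the difference $D=\KazT{\multi{}}-\KazT{\multi{}}'$ equals $A_{\multi{}}-A'_{\multi{}}$. It is therefore a linear polynomial in the $s_\lambda$, homogeneous of degree $\tcodim_l(\multi{})$, and by (3) it vanishes on every prototype $\proto{\eta}{l}$ of a singularity $\eta\neq\multi{}$ with $\tcodim_l(\eta)\le\tcodim_l(\multi{})$. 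When $\multi{}=\eta$ is a single singularity, (2) also gives $D(\proto{\eta}{l})=0$, since both families give $[0]$.

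It remains to show that a linear polynomial $D=\sum a_\lambda s_\lambda$ of this degree, vanishing on all prototypes of singularities of small enough codimension, must be zero. This is the main obstacle. The idea is that $D$ defines a characteristic class on the target supported on the singularity loci. A linear combination of Landweber--Novikov classes is $f_*$ of a polynomial $\sum a_\lambda c_\lambda(f)$ in the source. Evaluated on the prototype $\proto{\eta}{l}$ with its torus action, $s_\lambda(\proto{\eta}{l})$ equals the push-forward of $c_\lambda$ from the source point, namely $c_\lambda(T_\eta)\cdot e(W)/e(V)$. Thus vanishing on $\proto{\eta}{l}$ means the polynomial $p=\sum a_\lambda c_\lambda$ vanishes on the normal weights of $\eta$.

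I would then invoke the single-germ interpolation theorem of \cite{rrtp} (uniqueness for Thom polynomials in $\QQ[\cvar]$). It applies because we are in the Mather range, where Theorem~\ref{thm:quasihomogenous} gives positive $\CC^*$-actions and the restriction maps are injective enough. From this it follows that $p$ restricts to zero on every stratum of codimension at most $\deg p=\tcodim_l(\multi{})-l=\scodim_l(\multi{})$, so $p$ vanishes in $\coh^\bullet_{\A}(\E(m,m+l))$. Since that ring is the polynomial ring in the $c_i$ by the contractibility of $\E$, we get $p=0$, hence $D=0$. The delicate point is justifying that test germs cover all strata of codimension at most $\scodim_l(\multi{})$, i.e.\ that we stay within $M(l)$ so that no moduli occur. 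This holds because $\mzeta$ is Mather and sub-multisets have smaller codimension by Formula~\eqref{w:scodim}.

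Finally, since the true $\KazT{\multi{}}$ satisfy (1)--(3), uniqueness gives $\KazT{\multi{}}$ equal to the Kazarian polynomials. The generating-function identity in the Kazarian--Ohmoto theorem (restricted to $\multi{}\subset\mzeta$, which is legitimate because the exponential only mixes sub-multisets) then yields $A_{\multi{}}=\ThT{\multi{}}$. To verify the hypotheses for the true polynomials, condition (3) follows because $\ThT{\multi{}}(\proto{\eta}{l})$ computes $|\Aut(\multi{})|\,[\overline{\Tlocus{\multi{}}{\proto{\eta}{l}}}]$. By Proposition~\ref{pro:codim}, this closure has codimension $\tcodim_l(\multi{})\geq\tcodim_l(\eta)$, so it either avoids the origin or is forced equal to it, and the latter is excluded by Proposition~\ref{pro:proto_0} when $\multi{}\ne\eta$. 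Equivariant localization to the fixed point $0$ then gives zero.
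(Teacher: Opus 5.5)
The paper gives no proof of Theorem~\ref{thm:interp2}, so there is no argument to compare yours with step by step. It records the statement as folklore and points to the more general \cite[Thm.~8.2]{JKRRmult}, which covers the Segre--Schwartz--MacPherson deformation. Your proposal is correct, and it supplies a self-contained derivation from the classical monosingularity interpolation of \cite{rrtp}. The steps are:
\begin{enumerate}
\item Induction on the cardinality of $\multi{}\subset\mzeta$ isolates the difference $D=\KazT{\multi{}}-\KazT{\multi{}}'=\sum a_\lambda s_\lambda$.
\item On a monosingularity prototype $\proto{\eta}{l}\colon V\to W$ one has $s_\lambda(\proto{\eta}{l})=c_\lambda(\proto{\eta}{l})\,e(W)/e(V)$.
\item So the hypotheses say that the source polynomial $p=\sum a_\lambda c_\lambda$ restricts to zero on every contact orbit of codimension at most $\deg p=\scodim_l(\multi{})\le\scodim_l(\mzeta)\le M(l)$.
\item The Euler condition in the Mather range then forces $p=0$.
\end{enumerate}

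What your route buys is a precise explanation of why prototypes of \emph{mono}singularities suffice as test maps. Linearity of $\KazT{\multi{}}$ makes it a push-forward of a source class, and push-forward along a monosingularity prototype is multiplication by a fixed nonzero element. It also shows exactly where the Mather hypothesis enters. The cited generalization is heavier but also handles the deformed series, which your argument does not address.

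When you write it up, make the following points explicit.
\begin{itemize}
\item $e(W)\neq 0$ in $\coh^\bullet_{\TT_\eta}(pt)$, because the positive $\CC^*$-action sits in $\TT_\eta$ up to conjugacy and has nonzero weights. Hence $e(W)/e(V)$ is a non-zero-divisor.
\item The map $\QQ[\cvar]\to\coh^\bullet(B(\GL_m\times\GL_{m+l}))$ is injective only in degrees $\le m$. So take $m\gg0$ rather than saying this ring \emph{is} $\QQ[\cvar]$.
\item Restriction from the possibly disconnected compact stabilizer (e.g.\ $G_{I_{22}}$) to its maximal torus is rationally injective. The extra $U(m-a)$ factor acts trivially on the virtual normal representation. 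Together these show that the test $D(\proto{\eta}{l})=0$ really is the restriction of $p$ to the orbit.
\item Hypothesis (1) for the true $\KazT{\multi{}}$ follows inductively from the additivity of $\tcodim_l$.
\end{itemize}
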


This theorem was known, or at least widely believed, by experts in the field. In concrete applications a formal proof is not required. In such situations, it suffices to list the requirements (1), (2), and (3) and observe that they admit a unique solution; see, for example, \cite{rrmult, KazaMulti, RM4tuple, ohmotoSMTP, PallaresPenafort}. 
A generalization of this theorem is available in \cite[Thm. 8.2]{JKRRmult}. It involves a one-parameter deformation, called the Segre–Schwartz–MacPherson–Thom polynomials.

\begin{rem}
    As seen in the theorems above, symmetries of singularities play a key role in calculating Thom polynomials. The necessary data on the symmetries of all Mather singularities are given on \cite{TPP}, see also Section~\ref{sec:symmetries_summary} below. 
\end{rem}

\section{Symmetries as consistency check for algebra classification}

In this section, we discuss symmetry groups. They are used in two different ways in this paper. First, we saw in Section~\ref{sec:interpolation} that the interpolation method explicitly depends on symmetries of singularities. Now we show a second argument, namely how symmetries can be {used} to {\em test} whether a conjectured list of small dimensional algebras is complete or not (cf. Section~\ref{sec:algebras}).  

\subsection{Symmetries of singularities} \label{sec:symmetries_summary}
We begin by briefly summarizing results from \cite[Ch.1]{rrphd} on symmetries of Mather singularities and their associated algebras. This section relies more heavily on notions and standard facts from singularity theory than the rest of the paper; the monograph \cite{MNB} is a standard reference.

Let $l\geq 0$, and let $Q$ be a Mather algebra for $l$, minimally presented by $a$ generators and $b$ relations: $Q=\CC[x_1,\ldots,x_a]/(r_1,\ldots,r_b)$. The difference $l_0:=b-a$ is called the {\em defect} of the algebra, and we necessarily have $l\geq l_0$. We will be concerned with some groups (up to homotopy equivalence) and some germs. 
\begin{itemize}
    \item Let $G_Q$ denote the maximal compact subgroup of the algebra automorphism group of $Q$.
       \item The {\em $l$-genotype} associated to $Q$ is the germ
\[
\kappa_Q^l:\CC^a\to \CC^{a+l},
\qquad
(x_1,\ldots,x_a)\mapsto (r_1,\ldots,r_b,\underbrace{0,\ldots,0}_{l-l_0}).
\]
\item Let $m\geq a$. The {\em contact group} $\K=\K(m,m+l)$ acts on $\E(m,m+l)$, and for the orbit of the (trivial unfolding of the) genotype we have
\[ 
\K \cdot (\kappa_Q^l \times \id_{\CC^{m-a}})= \eta_{Q} \subset \E(m,m+l)
\]
from \eqref{eq:eta_Q}. 
 \item The so-called {\em miniversal unfolding} of the $l$-genotype is the $l$-prototype 
$
\proto{Q}{l}: \CC^{m_0} \to \CC^{m_0+l}
$.
\item Let $m \geq m_0$. The {\em right-left group} $\A=\A(m,m+l)$ acts on $\E(m,m+l)$, and define
\[
\A \cdot (\proto{Q}{l} \times \id_{\CC^{m-m_0}}) = \eta'_Q.
\]
\item We have $\eta'_Q \subset \eta_Q$, and the difference $\eta_Q -\eta'_Q$ has smaller dimension than $\eta_Q$. In fact, $\eta'_Q$ is the unique $\A$-orbit in $\eta_Q$ consisting of stable germs. We have
\[
\codim(\eta_Q \subset \E(m,m+l))=\codim(\eta'_Q \subset \E(m,m+l))=m_0,
\]
the source dimension of the $l$-prototype. 
\item 
The $\K$-stabilizer subgroup of the genotype $\kappa_Q^l$, up to homotopy equivalence, is $G_Q \times U(l-l_0)$.  The $\K$-stabilizer subgroup of an element of the $\K$-orbit $\eta_Q$, up to homotopy equivalence, is $G_Q \times U(l-l_0) \times U(m-a)$.
(Even though 
$\K$ is infinite dimensional, it is possible to define the maximal compact subgroup of the stabilizer, essentially by considering high enough jets instead of germs, see details in \cite[Ch.1]{rrphd}. This maximal compact symmetry group turns out to be homotopy equivalent, in the appropriate sense, to the named algebraic groups.)
\end{itemize}

\begin{ex}
    For $Q=\III_{23}=\CC[x,y]/(x^2,xy,y^3)$ we have $l_0=1$, and $G_Q=U(1)\times U(1)$ with the components scaling $x$ and $y$. 
    The genotype $\kappa^2$ for $l=2$ is the germ
    \[
    \kappa^2:(\CC^{2},0)\to (\CC^{4},0), \qquad
    (x,y)\mapsto (x^2,xy,y^3,0).
    \]
    The prototype $\proto{}{2}$ for $l=2$ is the miniversal unfolding germ $(\CC^{2+9},0)\to (\CC^{4+9},0)$,
\begin{multline*}
\proto{}{2}: (x,y,u_1,\ldots,u_9) \mapsto \\
(x^2+u_1x+u_2y+u_3y^2, xy, y^3+u_4x+u_5y+u_6y^2,u_7x+u_8y+u_9y^2,
    u_1,\ldots, u_9).
\end{multline*}
The maximal compact $\K$-symmetry group of $\kappa^2$ (as well as the maximal compact $\A$-symmetry group of $\proto{}{2}$) is $G_Q \times U(1)$. The reader may find it instructive to find the (in this case diagonal) actions of this group on the two source and target spaces. Hence, for $m\geq 2$ the maximal compact $\K$-stabilizer of a representative in $\eta_{\III_{23}}\subset \E(m,m+2)$ is
\[
\left( G_Q \times U(1) \right) \times U(m-2) = U(1)^3 \times U(m-2).
\]
The codimension of $\eta_{\III_{23}}$ in $\E(m,m+2)$ is 11, the source dimension of $\proto{}{2}$. 
\end{ex}

\subsection{Poincar\'e series identities for Mather singularities}
\label{sec:classification_vs_identity}

Let $l\geq 0$ be fixed, and let $\eta_1,\eta_2,\ldots,\eta_N$ be the list of Mather singularities that occur for the relative dimension~$l$. The corresponding local algebras are denoted by $Q_1,Q_2,\ldots,Q_N$, the corresponding $l_0$ values (defects) are denoted $l_{0i}$, and the corresponding $m_{0}$ values are denoted $m_{0i}$. 
Recall that $m_{0i}$ be the codimension of $\eta_i\subset \E(m,m+l)$ for large enough $m$. 

For an algebraic group $G$ define
\[
\PP_G = \sum_{j=0}^{\infty} \dim \coh^{2j}(BG;\QQ)q^j.
\]

Our main theorem in this section follows from a version of 
Kazarian's spectral sequence; the spectral sequence of a filtration applied to the Borel construction on the representation. Classical references include \cite{AB} (implicitly), \cite{KazaMulti}, and more recent related expositions and examples are in  \cite[\S10]{LFRRobstructions}, \cite[\S1]{rrCOHA}, \cite[\S5]{RRSimon}.

\begin{thm} \label{thm:MatherSS}
Up to (and including) cohomological degree $2M(l)$ we have
\begin{equation}
\label{eq:SSidentity}
\PP_{U(\infty)}=
\sum_{i=1}^N 
q^{m_{0i}} \PP_{G_{Q_i}}\PP_{U(l-l_{0i})},
\end{equation}
that is, the difference of the two sides is $o(q^{M(l)})$.
\end{thm}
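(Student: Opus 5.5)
The plan is to compute the $\K$-equivariant cohomology of the contractible space $V=\E(m,m+l)$ (jets of high order, $m\gg 0$) in two ways. On one hand, $V$ is a vector space, so $\coh^\bullet_{\K}(V)\cong\coh^\bullet(B\K)$. The group $\K$ deformation retracts onto its maximal compact subgroup $U(m)\times U(m+l)$, since the unipotent parts of the jet groups are contractible. So its Poincaré series is $\PP_{U(m)}\PP_{U(m+l)}$. On the other hand, we filter $V$ by closed $\K$-invariant subsets $V=F_0\supset F_1\supset\cdots$, where $F_k$ is the union of contact orbits of codimension $\ge k$ together with the non-stable and non-Mather germs. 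By the Mather facts recalled before Definition~\ref{def:Mather_singularity}, for $k\le M(l)$ the part of $V$ not covered by the $\eta_i$ with $m_{0i}<k$ has codimension at least $k$. Hence the strata $F_k\setminus F_{k+1}$ with $k\le M(l)$ are disjoint unions of orbits $\eta_i$ with $m_{0i}=k$, and the remainder $F_{M(l)+1}$ has codimension $\ge M(l)+1$.

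Next we apply the Gysin/Thom long exact sequences of the pairs $(F_k\setminus F_{k+2},F_{k+1}\setminus F_{k+2})$. Equivalently, we use the spectral sequence of the filtration on the Borel construction $E\K\times_\K V$. For each orbit $\eta_i$ the $E_1$ contribution is $\coh^{\bullet-2m_{0i}}_\K(\eta_i)=\coh^{\bullet-2m_{0i}}(B\operatorname{Stab})$. By Section~\ref{sec:symmetries_summary}, the stabilizer is homotopy equivalent to $G_{Q_i}\times U(l-l_{0i})\times U(m-a_i)$. The closed remainder $F_{M(l)+1}$ contributes only in degrees $>2M(l)$, by the usual purity/excision argument that removing a codimension-$c$ invariant subset does not change equivariant cohomology below degree $2c-1$.

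The key step is degeneration of the spectral sequence, which is where I expect the main obstacle. I would use the Atiyah–Bott criterion: it suffices that the equivariant Euler class of the normal bundle of each orbit $\eta_i$ is a non-zero-divisor in $\coh^\bullet_\K(\eta_i)$. This follows from Theorem~\ref{thm:quasihomogenous}. The positive $\CC^*$ in the symmetry group acts on the normal slice (the prototype's unfolding directions) with all weights nonzero, so the Euler class restricted to $\coh^\bullet(B\CC^*)$ is a nonzero multiple of a power of the generator. Hence multiplication by it is injective on $\coh^\bullet(B(\text{maximal torus}))$, and therefore on the Weyl invariants. A subtle point is that $G_{Q_i}$ need not be connected. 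Over $\QQ$ we pass to the identity component's torus and take invariants, and injectivity is preserved. With all differentials zero, $\PP$ is additive over strata up to degree $2M(l)$:
\[
\PP_{U(m)}\PP_{U(m+l)}=\sum_i q^{m_{0i}}\PP_{G_{Q_i}}\PP_{U(l-l_{0i})}\PP_{U(m-a_i)}+o(q^{M(l)}).
\]

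Finally we let $m\to\infty$. Then $\PP_{U(m)}$, $\PP_{U(m-a_i)}$ and $\PP_{U(m+l)}$ all agree with $\PP_{U(\infty)}$ up to degree $q^{m-a_i}$, well beyond $M(l)$. Dividing both sides by $\PP_{U(\infty)}$, an invertible power series with constant term $1$, yields \eqref{eq:SSidentity} modulo $o(q^{M(l)})$.
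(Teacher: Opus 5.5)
Your proof is correct. Its skeleton is the paper's: the spectral sequence of the codimension filtration of $\E(m,m+l)$ in $\K$-equivariant cohomology, with contributions $\coh^{\bullet-2m_{0i}}\bigl(B(G_{Q_i}\times U(l-l_{0i})\times U(m-a_i))\bigr)$, compared with $\coh^\bullet(B(GL_m(\CC)\times GL_{m+l}(\CC)))$ below degree $2M(l)+1$, and then $m\to\infty$.

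The one genuine difference is how you get degeneration. The paper uses parity. For every compact Lie group $G$, connected or not, $\coh^\bullet(BG;\QQ)=\coh^\bullet(BG^0;\QQ)^{\pi_0 G}$ is concentrated in even degrees, and the shifts $2m_{0i}$ are even. So the whole first page sits in even total degree, and all differentials vanish for degree reasons. This uses nothing about the singularities, in particular not Theorem~\ref{thm:quasihomogenous}. The step you flagged as the main obstacle is therefore a one-liner.

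Your Atiyah--Bott route is heavier but proves more. Each Gysin map $\coh^{\bullet-2m_{0i}}_{\K}(\eta_i)\to\coh^\bullet_{\K}(V\setminus F_{m_{0i}+1})$ is injective, i.e. the stratification is equivariantly perfect. This is the same mechanism behind the interpolation method: the restriction of $[\overline{\eta_i}]$ to $\eta_i$ is the Euler class of the normal space, a non-zero-divisor.

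Two small corrections to your write-up.
\begin{enumerate}
\item The normal space to the contact orbit $\eta_{Q_i}$ at (the suspension of) the genotype is not just the space of unfolding directions. As a representation of the positive $\CC^*$, it is the whole source $\CC^{m_{0i}}$ of the prototype. Besides the unfolding directions, of weight $w(u)$, it contains the $a_i$ directions $\partial\kappa/\partial x_j$, of weight $w(x_j)$. All these weights are positive, so your Euler-class conclusion stands.
\item In the contact setting, what goes into $F_k$ are the contact classes not among the $\eta_i$, not ``non-stable germs''. Each $\eta_i$ contains non-stable germs, and $F_k$ must be $\K$-invariant. Your next sentence shows this is what you meant.
\end{enumerate}
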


Observe that the left hand side is independent of $l$: it is the generating function of the number of partitions.  

\begin{ex} \label{ex:ss_example}
    For $l=0$ we have $N=20$. The first few algebras are $A_0$, $A_1$, $A_2$, $A_3$, $A_4$, $I_{22}$, $A_5$, $I_{23}$, of codimensions 0,1,2,3,4,4,5,5, respectively. Their defects are all $0$. We have $G_{A_0}=1$, $G_{I_{22}}=(U(1)\times U(1))\rtimes S_2$, and the other symmetry groups are $U(1)$. Figure~\ref{fig:l=0 ss} illustrates the nature of the identity \eqref{eq:SSidentity}.
\begin{figure}
 \begin{center}
    \begin{tabular}{r|rrrrrrrr} 
    
    $A_0$ & 1 &    &  &  &  &  & & \\
    $A_1$ &   & $q$  & $q^2$   & $q^3$ & $q^4$ & $q^5$ &  $q^6$ & $\ldots$\\
    $A_2$ &   &     & $q^2$     & $q^3$   & $q^4$ & $q^5$ &  $q^6$ & $\ldots$ \\
    $A_3$ &   &     &       & $q^3$     & $q^4$   & $q^5$ & $q^6$ &$\ldots$\\
    $A_4$ &   &     &       &       & $q^4$     & $q^5$   & $q^6$ &$\ldots$\\
    $A_5$ &   &     &       &       &       & $q^5$     & $q^6$ &$\ldots$\\
 $I_{22}$ &   &     &       &       &  $q^4$&  $q^5$  & $2q^6$ &$\ldots$\\
$I_{23}$ &   &     &       &       &  &  $q^5$  & $q^6$ &$\ldots$\\
$\vdots$   &  &     &       &       &  &   & $\ddots$\\
  \hline
  generating function  \\ 
 of partitions  & 1 & $q$ & $2q^2$ & $3q^3$ & $5q^4$ & $7q^5$ & $11q^6$ &$\ldots$\\ 
    \end{tabular}
\end{center}
    \caption{Illustration of Theorem~\ref{thm:MatherSS} for $l=0$. The rows show the contribution of Mather singularities to the RHS. Hence, the rows must add up to the generating function of partitions. Since all singularities are listed of codim$\leq 5$, the displayed part of the table (correctly) `predicts' that there must be $11-8=3$ singularities (or corresponding algebras) of codimension~6.}
    \label{fig:l=0 ss}
\end{figure}
\end{ex}

\begin{proof}
Let $m\gg 0$, and define $\E'=\cup_{i=1}^N \eta_i \subset \E(m,m+l)$. According to Mather theory the complement $\E(m,m+l) -\E'$ has codimension greater than $M(l)$ in $\E(m,m+l)$. The group $\K(m,m+l)$ is homotopy equivalent to $\A(m,m+l)$, and further to $GL_m(\CC)\times GL_{m+l}(\CC)$. Hence, we have that up to (and including) cohomological degree $2M(l)$ we have
\begin{multline} \label{eq:something}
    \coh^\bullet_{\K}(\E')=\coh^\bullet_{\K}(\E(m,m+l))= \coh^\bullet_{GL_m(\CC)\times GL_{m+l}(\CC)}(\E(m,m+l)) \\= \coh^\bullet(B(GL_m(\CC)\times GL_{m+l}(\CC))).
\end{multline}
Now we will calculate $\coh^\bullet_{\K}(\E')$ using data of $\E'$'s parts $\eta_i$. The numbers $l_0$ and $m_0$ associated with $\eta_i$ will be called $l_{0i}$ and $m_{0i}$. Consider 
\[
{\mathfrak F}_p= \bigcup_{\codim(\eta_i ) \leq p} \eta_i,
\]
and the spectral sequence $E^*_{**}$ in $\K$-equivariant cohomology associated with the filtration 
\[
\emptyset \subset {\mathfrak F}_0 \subset {\mathfrak F}_1 \subset \ldots 
=\E'.
\]
The spectral sequence  converges to $\coh^\bullet_{\K}(\E')$.
Its $E^2$ page is given by
\[
E^2_{2p,*}=\bigoplus_{\codim(\eta_i) = p} \coh^\bullet(BG_i),
\]
where $G_i$ is the $\K$-stabilizer subgroup (or its homotopically equivalent maximal compact subgroup) of a representative in $\eta_i$. For those groups the classifying spaces have no odd cohomology, and hence the spectral sequence is supported on $E^*_{even,even}$, and therefore it degenerates at $E^2$. Then, using \eqref{eq:something} and the statements in Section~\ref{sec:symmetries_summary}, up to cohomological degree $2M(l)$, we obtain
\[
\PP_{GL_m(\CC) \times GL_{m+l}(\CC)}
=
\sum_{i=1}^N
q^{m_{0i}} \PP_{G_i}
=
\sum_{i=1}^N
q^{m_{0i}} \PP_{G_{Q_i}} \PP_{U(l-l_{0i})} \PP_{U(m-a)}.
\]
As $m\to \infty$ one $\PP_{GL_{\infty}(\CC)}=\PP_{U(\infty)}$ factor cancels and we obtain, up to cohomological degree $2M(l)$, that 
\[
\PP_{GL_\infty(\CC)}
=
\sum_{i=1}^N
q^{m_{0i}} \PP_{G_{Q_i}} \PP_{U(l-l_{0i})},
\]
what we wanted to prove.   
\end{proof}

The role of Theorem~\ref{thm:MatherSS} in the classification of algebras/singularities is clear: it serves as a `consistency check'. Any error in the classification of small-dimensional algebras would likely violate identity~\eqref{eq:SSidentity}. For example, omitting an algebra with corresponding singularity $\eta$ causes the identity to fail at the coefficient of $q^c$, where $c=\codim(\eta)$ (cf.~Figure~\ref{fig:l=0 ss}). Moreover, \eqref{eq:SSidentity} provides not a single such test, but infinitely many—one for each~$l$.

The website \cite{TPP} contains the list of Mather singularities $Q_i$, together with data on the groups $G_{Q_i}$. That data is consistent with Theorem~\ref{thm:MatherSS}. We verified this fact by computer for $l=0,1,\ldots,100$, and with an extra algebraic argument (not detailed here) for $l>100$.

\section{Stable hierarchy}
    \subsection{Definition and structure of the partial order}
    Fix an integer $l\ge 1$.
    Let $Q$ be a local algebra that occurs for $l$ and $\eta$ the corresponding singularity. 
    The classical partial order
    on singularities is induced by containment in the vector space of germs, see e.g. \cite[Sect. 15]{Varchenko_book}, \cite{rrtp,FeherPatakfalvi}. We refer to this order as the stable hierarchy for $l$ and denote it by $\le_l$.
    \begin{df}\label{df:hier_sing}
        Let $Q$ and $R$ be local algebra that occurs for $l$. Let $\eta$ and $\zeta$ be the corresponding singularities. We write $Q\le_l R$ (or $\eta \le_l \zeta$) if one of the following equivalent conditions hold:
        \begin{enumerate}
            \item The singularity $\eta$ is contained in the closure of $\zeta$ in the germ space $\E(m,m+l)$ for large enough $m$.
            \item For every stable map of relative dimension $l$  we have $\Qlocus{Q}{f}\subset \overline{\Qlocus{R}{f}}$.
        \end{enumerate}
    \end{df}
    We
    generalize this partial order to the case of multisingularities. Here, there is no reasonable substitute for the ambient space $\E(m,m+l)$. The standard tool of singularity theory, the multijet bundle, is not well suited to our purposes, since it fixes the number of components
    of a multisingularity. Thus, it does not allow e.g. a comparison between $A_0^2$ and $A_1$. Instead, we study the singularity loci for stable maps. 
    \begin{df} \label{pro:defT}
	Let $\multi{}$ and $\mzeta$ be multisingularities that occur for $l$.  We write $\multi{} \le_l \mzeta$ if one of the following equivalent conditions hold:
	\begin{enumerate}
		\item For any stable map $f$ of relative dimension $l$  we have
		$\Tlocus{\multi{}}{f} \subset \overline{\Tlocus{\mzeta}{f}}\,.$
		\item There exists a stable map $f$ of relative dimension $l$  such that
		$\Tlocus{\multi{}}{f} \cap \overline{\Tlocus{\mzeta}{f}}\neq \varnothing\,.$
		\item Let $\proto{\multi{}}{l}$ be the prototype of $\multi{}$ for $l$. We have $0\in \overline{\Tlocus{\mzeta}{\proto{\multi{}}{l}}}$.
	\end{enumerate}
    \end{df}
    The equivalence of conditions (1), (2), and (3) in the above definition is an immediate consequence of Proposition \ref{pro:local}.
\begin{rem}
    The equivalence of (1) and (2) in Definition \ref{pro:defT} means that for any stable map the stratification by singularity types satisfies the frontier condition.
\end{rem}
We use the same symbol $\le_l$ to denote the stable hierarchy of singularities and multisingularities. In Proposition \ref{pro:two_orders} we will show that this notation is consistent: the order for singularities is simply the restriction of the order on multisingularities. Before proving this, we show that the stable hierarchy $\le_l$ on multisingularities is determined by combinatorics and ``elementary splits'' of the form $\eta\le_l\mzeta$ where $\eta$ is a {\em mono}singularity. 
\begin{pro} \label{pro:splits}
	Let $\multi{}=(\eta_1,\dots,\eta_k)$ and $\mzeta$ be multisingularities that occur for $l$. We have $\multi{} \le_l \mzeta$ if and only if there exist multisingularities $\mzeta_1,\dots,\mzeta_{k}$ such that:
    \begin{enumerate}
        \item The multisingularities $\mzeta_i$ sum up to $\mzeta$, i.e. $\mzeta=\mzeta_1+\dots+\mzeta_{k}$.
        \item For any index $i$ we have $\eta_i \le_l \mzeta_i$\,.
    \end{enumerate}
    \end{pro}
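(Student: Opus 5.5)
The natural approach is to work with the product structure of prototypes from Proposition~\ref{pro:proto_multi} and use criterion (3) of Definition~\ref{pro:defT}. Write $\proto{\eta_i}{l}:V_i\to W_i$, so that
\[
\proto{\multi{}}{l}=\bigsqcup_i \proto{\eta_i}{l}\times\id_{\prod_{j\ne i}W_j}\colon \bigsqcup_i \Big(V_i\times\prod_{j\neq i}W_j\Big)\to W=\prod_i W_i .
\]
The key observation is that for a point $w=(w_1,\dots,w_k)\in W$ the fiber of $\proto{\multi{}}{l}$ over $w$ is the disjoint union, over $i$, of the fibers of $\proto{\eta_i}{l}$ over $w_i$. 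This can be checked scheme-theoretically via Corollary~\ref{cor:formal_local}: the fiber of the $i$-th component is $\proto{\eta_i}{l}^{-1}(w_i)\times\{(w_j)_{j\ne i}\}$. Consequently the multisingularity at $w$ is the sum
\[
\multi{w}=\sum_i \multi{w_i}\bigl(\proto{\eta_i}{l}\bigr).
\]
Equivalently,
\[
\Tlocus{\mzeta}{\proto{\multi{}}{l}}=\bigcup_{\mzeta_1+\dots+\mzeta_k=\mzeta}\ \prod_i \Tlocus{\mzeta_i}{\proto{\eta_i}{l}},
\]
a finite union over decompositions of the multiset $\mzeta$. Here, whenever $\mzeta_i$ is empty, $\Tlocus{\emptyset}{}$ denotes the complement of the image.

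For the implication ``$\Leftarrow$'', suppose $\mzeta=\sum\mzeta_i$ with $\eta_i\le_l\mzeta_i$. By criterion (3) applied to each $\eta_i$, we have $0\in\overline{\Tlocus{\mzeta_i}{\proto{\eta_i}{l}}}\subset W_i$. The closure of a product is the product of closures, so
\[
0\in\overline{\prod_i\Tlocus{\mzeta_i}{\proto{\eta_i}{l}}}\subset\overline{\Tlocus{\mzeta}{\proto{\multi{}}{l}}},
\]
which gives $\multi{}\le_l\mzeta$ by (3).

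For the implication ``$\Rightarrow$'', suppose $0\in\overline{\Tlocus{\mzeta}{\proto{\multi{}}{l}}}$. Since the union above is finite, $0$ lies in the closure of one of its terms, $\prod_i\Tlocus{\mzeta_i}{\proto{\eta_i}{l}}$. Projecting to each factor, $0\in\overline{\Tlocus{\mzeta_i}{\proto{\eta_i}{l}}}$ for every $i$, which is exactly $\eta_i\le_l\mzeta_i$ by (3).

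The main obstacle is the bookkeeping around the decomposition formula. First, the finiteness of the union needs justification: it holds because $\mzeta$ is a finite multiset, so there are only finitely many ways to write it as a sum of $k$ sub-multisets. Second, and more delicate, is the case of empty $\mzeta_i$. The statement apparently allows it, but $\eta_i\le_l\emptyset$ would mean that $0$ lies in the closure of the complement of the image of $\proto{\eta_i}{l}$. This always holds for $l\ge1$, since the image is a proper closed subset, so empty parts are harmless. Still, I would state this convention explicitly and check that the stable hierarchy is meant to include the empty multisingularity, or otherwise argue that empty $\mzeta_i$ cannot be needed.
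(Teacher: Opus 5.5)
Your proposal is correct and follows the paper's proof: the product form of the prototype (Proposition~\ref{pro:proto_multi}) expresses $\Tlocus{\mzeta}{\proto{\multi{}}{l}}$ as a finite union, over decompositions $\mzeta=\mzeta_1+\dots+\mzeta_k$, of products $\prod_i\Tlocus{\mzeta_i}{\proto{\eta_i}{l}}$. Criterion (3), together with the fact that the closure of a finite union is the union of the closures, then gives both directions. Your worry about empty $\mzeta_i$ is settled by the paper's conventions: the empty set counts as a multisingularity, and for $l\ge 1$ it is the maximal element of $\le_l$, which is exactly what the proof of Corollary~\ref{cor:dense_sing} uses.
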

    \begin{proof}
    Let $\proto{\eta_i}{l}:V_i\to W_i$ be the prototype of $\eta_i$ and $W=\prod_{i=1}^k W_i$. Proposition \ref{pro:proto_multi} implies that the prototype of the multisingularity $\multi{}$ is of the form
	$$\proto{\multi{}}{l} 
    :\bigsqcup_{i=1}^k \big(V_i\times \prod_{j\neq i} W_j\big) \to  W\,.$$
    The stratification of $W$ by singularity types induced by $\proto{\multi{}}{l}$ is the intersection of stratifications induced by $\proto{\eta_i}{l} \times \id$. In particular
    \begin{align} \label{1}
		\Tlocus{\mzeta}{\proto{\multi{}}{l}}=
        \bigcup\limits_{\mzeta=\mzeta_1+\dots+\mzeta_{k}}\bigcap_{i=1}^k\Tlocus{\mzeta_i}{\proto{\eta_i}{l}\times \id}
        =
        \bigcup\limits_{\mzeta=\mzeta_1+\dots+\mzeta_{k}}\prod_{i=1}^k \Tlocus{\mzeta_i}{\proto{\eta_i}{l}}
        \,.
	\end{align}   
    We have $\multi{} \le_l \mzeta$ if and only if the origin $0\in W$ lies in the closure of the stratum $\Tlocus{\mzeta}{\proto{\multi{}}{l}}$. The sum in Formula \eqref{1} is finite, so
    the closure of the union equals the union of the closures, thus
    $$
    \multi{} \le_l \mzeta \iff
    0 \in\bigcup\limits_{\mzeta=\mzeta_1+\dots+\mzeta_{k}}\prod_{i=1}^k \overline{\Tlocus{\mzeta_i}{\proto{\eta_i}{l}}}\,.
    $$
    This means exactly that there is a decomposition $\mzeta=\mzeta_1+\dots+\mzeta_{k}$ such that for every $i$ the origin $0\in W_i$ lies in the closure of
    $\Tlocus{\mzeta_i}{\proto{\eta_i}{l}}$.
\end{proof}
\begin{cor} \label{cor:dense_sing}
    Let $f$ be a stable map of relative dimension $l$  and $\eta$ a singularity that occurs for $l$. The locus $\Tlocus{\eta}{f}$ is dense in $f(\Qlocus{\eta}{f})$.
\end{cor}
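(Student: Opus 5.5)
By Proposition~\ref{pro:loci}, $f(\Qlocus{\eta}{f})=\bigcup_{\eta\in\mzeta}\Tlocus{\mzeta}{f}$. The statement therefore amounts to showing that every point $y$ in this union lies in $\overline{\Tlocus{\eta}{f}}$, where $\eta$ is viewed as a multisingularity of cardinality one. So I fix a multisingularity $\mzeta$ that occurs for $l$ and contains $\eta$, say $\mzeta=(\eta,\zeta_2,\dots,\zeta_k)$, together with a point $y\in\Tlocus{\mzeta}{f}$. The goal is to prove $\mzeta\le_l \eta$. Condition (1) of Definition~\ref{pro:defT} then gives $\Tlocus{\mzeta}{f}\subset\overline{\Tlocus{\eta}{f}}$ for every stable $f$, and taking the union over $\mzeta$ gives density.

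To prove $\mzeta\le_l\eta$ I will use Proposition~\ref{pro:splits}. I need a decomposition $\eta=\mzeta_1+\dots+\mzeta_k$ with $\mzeta_i$ multisingularities such that each element of $\mzeta$ is $\le_l$ the corresponding $\mzeta_i$. The natural choice is $\mzeta_1=\eta$ and $\mzeta_i=\varnothing$ for $i\ge 2$; the empty set is explicitly allowed as a multisingularity. This reduces the proof to two facts:
\begin{enumerate}
\item $\eta\le_l\eta$, which is reflexivity.
\item $\zeta_i\le_l\varnothing$ for every singularity $\zeta_i$ occurring for $l$.
\end{enumerate}

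Fact (1) follows from condition (3) of Definition~\ref{pro:defT} together with Proposition~\ref{pro:proto_0}. Indeed, $\Tlocus{\eta}{\proto{\eta}{l}}=\{0\}$, so $0$ lies in its closure.

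Fact (2) is the main point. The locus $\Tlocus{\varnothing}{\proto{\zeta_i}{l}}$ is the set of target points with empty preimage, that is, the complement of the image of the prototype. The prototype $\proto{\zeta_i}{l}\colon\CC^{m}\to\CC^{m+l}$ is finite with $l\ge1$. Its image is therefore a closed analytic subset of dimension $m<m+l$, and hence nowhere dense. Consequently the complement is dense near $0$, and $0$ lies in its closure. This gives $\zeta_i\le_l\varnothing$ via condition (3).

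The only step that needs care is the interpretation of $\varnothing$ in Proposition~\ref{pro:splits}: the empty multisingularity must be admissible both as a summand and as a stratum. The paper allows this, since it declares the empty set to be a multisingularity. This is also exactly where the hypothesis $l\ge 1$ is used.
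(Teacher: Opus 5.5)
Your proof is correct and matches the paper's argument: the empty multisingularity is maximal for $\le_l$ because the image has positive codimension when $l\ge 1$; Proposition~\ref{pro:splits} with the decomposition $\eta=\eta+\varnothing+\dots+\varnothing$ then gives $\mzeta\le_l\eta$ for every $\mzeta\ni\eta$; and Proposition~\ref{pro:loci} finishes. The only difference is that you check maximality of $\varnothing$ and reflexivity on the prototype via condition~(3), while the paper argues directly with the image of $f$.
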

\begin{proof}
    We have $l\ge 1$, therefore the image of the map $f$ is of codimension at least $1$. This implies that the empty multisingularity is the maximal element of the order $\le_l$. Thus, Proposition \ref{pro:splits} implies that
    $
    \eta\in\multi{} \Rightarrow \multi{}\le_l\eta\,.
    $
    This means that for every $\eta\in\multi{}$ we have
    $$\Tlocus{\multi{}}{f}\subset \overline{\Tlocus{\eta}{f}}$$
    By Proposition \ref{pro:loci} the loci $\Tlocus{\multi{}}{f}$ sum up to $f(\Qlocus{\eta}{f})$.
\end{proof}
We are now ready to show that the restriction of the stable hierarchy from Definition \ref{pro:defT} to singularities is the partial order from Definition \ref{df:hier_sing}.
\begin{pro} \label{pro:two_orders}
    Let $\eta$ and $\zeta$ be singularities that occur for $l$ and $f$ a stable map of relative dimension $l$ . Suppose that  $\Tlocus{\eta}{f}\neq \varnothing$, then
    $$
    \Qlocus{\eta}{f}\subset \overline{\Qlocus{\zeta}{f}} \iff \Tlocus{\eta}{f}\subset \overline{\Tlocus{\zeta}{f}}\,.
    $$
\end{pro}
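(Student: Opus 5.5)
We prove the two implications separately, using Proposition~\ref{pro:loci}, Remark~\ref{rem:closed} (finite maps are closed) and Corollary~\ref{cor:dense_sing}.

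\emph{($\Rightarrow$)} Assume $\Qlocus{\eta}{f}\subset\overline{\Qlocus{\zeta}{f}}$. Apply $f$ and use that $f$ commutes with closures:
$$f(\Qlocus{\eta}{f})\subset f\big(\overline{\Qlocus{\zeta}{f}}\big)=\overline{f(\Qlocus{\zeta}{f})}.$$
Corollary~\ref{cor:dense_sing}, applied to $\zeta$, shows that $\Tlocus{\zeta}{f}$ is dense in $f(\Qlocus{\zeta}{f})$, so the right-hand side equals $\overline{\Tlocus{\zeta}{f}}$. Finally $\Tlocus{\eta}{f}\subset f(\Qlocus{\eta}{f})$ by Proposition~\ref{pro:loci}, which gives the claim.

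\emph{($\Leftarrow$)} Assume $\Tlocus{\eta}{f}\subset\overline{\Tlocus{\zeta}{f}}$. By Definition~\ref{pro:defT}(1)$\Leftrightarrow$(2), applied with the point hypothesis $\Tlocus{\eta}{f}\neq\varnothing$, this gives $\eta\le_l\zeta$ as multisingularities. Hence the inclusion $\Tlocus{\eta}{g}\subset\overline{\Tlocus{\zeta}{g}}$ holds for every stable map $g$. The idea is to transport this to the prototype of $\eta$ in the source. Take $x\in\Qlocus{\eta}{f}$; we want $x\in\overline{\Qlocus{\zeta}{f}}$. This is a local question at $x$, and the singularity stratification of the source near $x$ depends only on the monogerm of $f$ at $x$. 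That monogerm is a stable germ in $\eta$, so it is $\A$-equivalent to $\proto{\eta}{l}\times\id$. It therefore suffices to prove that $0\in\overline{\Qlocus{\zeta}{\proto{\eta}{l}}}$ for the monogerm prototype $\proto{\eta}{l}\colon(\CC^{m_0},0)\to(\CC^{m_0+l},0)$.

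For the prototype, $f^{-1}(0)=\{0\}$ and $\Tlocus{\eta}{\proto{\eta}{l}}=\{0\}$. Since $0\in\overline{\Tlocus{\zeta}{\proto{\eta}{l}}}$, there are points $y_n\to0$ with $y_n\in\Tlocus{\zeta}{\proto{\eta}{l}}$. The fibre over each $y_n$ consists of a single point $x_n$ with $\eta_{x_n}=\zeta$. Because the germ is finite, hence proper on a suitable representative, the points $x_n$ converge to the unique preimage $0$. Thus $0\in\overline{\Qlocus{\zeta}{\proto{\eta}{l}}}$. Translating back through the $\A$-equivalence gives $x\in\overline{\Qlocus{\zeta}{f}}$, as required.

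The main obstacle is making the localization step rigorous. One has to choose representatives of the germs so that closures in the germ neighbourhood agree with closures in $M$, and check that properness yields convergence of the preimages. This would be handled by a source analogue of Remark~\ref{rem:germs} and Proposition~\ref{pro:local}, together with Remark~\ref{rem:closed} applied to a finite representative.
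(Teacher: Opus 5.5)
Your forward implication is the same as the paper's. For the converse you take a genuinely different route.

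The paper argues globally. Every point of $\Tlocus{\eta}{f}$ has a unique preimage, so $\Tlocus{\eta}{f}\subset f(\overline{\Qlocus{\zeta}{f}})$ gives $f^{-1}(\Tlocus{\eta}{f})\subset\overline{\Qlocus{\zeta}{f}}$. It then finishes with a chain of closure identities ending in $f^{-1}(\overline{\Tlocus{\eta}{f}})=\overline{f^{-1}(\Tlocus{\eta}{f})}$, citing Remark~\ref{rem:closed}.

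You instead upgrade the hypothesis to $\eta\le_l\zeta$ and localize in the source at each $x\in\Qlocus{\eta}{f}$. You then run the unique-preimage argument on the monogerm prototype. There $(\proto{\eta}{l})^{-1}(0)=\{0\}$, and properness forces the lifted points $x_n$ to converge to $0$ rather than to some other point of a fibre.

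The paper's version is shorter, but its last step is the delicate one. A finite map is closed but not open, so $f^{-1}(\overline{Y})=\overline{f^{-1}(Y)}$ fails in general. For instance, take $y\in\Tlocus{A_0A_1}{f}$ and let $x'$ be its $A_0$-preimage. Then $x'\in f^{-1}(\overline{\Tlocus{A_1}{f}})$, but $x'\notin\overline{f^{-1}(\Tlocus{A_1}{f})}$, because all points near $x'$ have type $A_0$. What the paper really needs is $\Qlocus{\eta}{f}\subset\overline{f^{-1}(\Tlocus{\eta}{f})}$. This is true, but it requires a local argument, e.g.\ via the product form of prototypes in Proposition~\ref{pro:proto_multi}.

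Your route avoids this issue altogether. The price is a source localization, which is harmless because the local algebra at a point $x'$ depends only on the germ of $f$ at $x'$. You also rely on the standard fact (Mather) that each branch of a stable multigerm is a stable monogerm. You should cite it where you assert that the germ of $f$ at $x$ is stable.
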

\begin{proof}
    Through the proof we will repeatedly use Corollary \ref{cor:dense_sing} and the fact that the map $f$ is closed, cf. Remark \ref{rem:closed}. \\
    Suppose that $\Qlocus{\eta}{f}\subset\overline{\Qlocus{\zeta}{f}}$.  Then
    $$
     \Tlocus{\eta}{f}\subset f(\Qlocus{\eta}{f}) \subset
     f\Big(\overline{\Qlocus{\zeta}{f}}\Big)=
     \overline{f(\Qlocus{\zeta}{f})}=
     \overline{\Tlocus{\zeta}{f}}
     \,.
    $$
    Suppose now that $\Tlocus{\eta}{f}\subset \overline{\Tlocus{\zeta}{f}}$. We have
    $$\Tlocus{\eta}{f}\subset\overline{\Tlocus{\zeta}{f}}=\overline{f(\Qlocus{\zeta}{f})}= f\Big(\overline{\Qlocus{\zeta}{f}}\Big)\,.$$
    Every point in $\Tlocus{\eta}{f}$ has a unique preimage, thus
    $f^{-1}(\Tlocus{\eta}{f})\subset \overline{\Qlocus{\zeta}{f}}$. On the other hand
    $$
    \Qlocus{\eta}{f}\subset
    \overline{f^{-1}(f(\Qlocus{\eta}{f}))}=
    f^{-1}\Big(\overline{f(\Qlocus{\eta}{f})}\Big)=
    f^{-1}\Big(\overline{\Tlocus{\eta}{f}}\Big)=
    \overline{f^{-1}(\Tlocus{\eta}{f})}\,.
    $$
\end{proof}
The dimension of a multisingularity, and the codimension of singularity loci are consistent with the partial order $\le_l$. This generalizes standard invariants from the singularity case.
    \begin{pro}
        Let $\multi{}$ and $\mzeta$ be multisingularities that occur for $l$. If  $\multi{} \le_l \mzeta$ then
        \begin{itemize}
            \item $\tcodim_l \multi{} \ge \tcodim_l \mzeta$\,,
            \item $\dim \multi{} \ge \dim \mzeta$\,.
        \end{itemize}
    \end{pro}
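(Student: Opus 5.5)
Two separate arguments are needed, one for each bullet.

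\emph{Codimension.} I would use Definition~\ref{pro:defT}(3). Suppose $\multi{}\le_l\mzeta$, and let $\proto{\multi{}}{l}\colon\bigsqcup V_i\to W$ be the prototype of $\multi{}$. Then $0\in\overline{\Tlocus{\mzeta}{\proto{\multi{}}{l}}}$, so in particular this locus is nonempty. By Proposition~\ref{pro:codim} it is a smooth subvariety of $W$ of codimension $\tcodim_l(\mzeta)$, and hence it has dimension $\dim W-\tcodim_l(\mzeta)$. By Proposition~\ref{pro:proto_0} the stratum $\Tlocus{\multi{}}{\proto{\multi{}}{l}}$ is exactly $\{0\}$, and $\dim W=\tcodim_l(\multi{})$. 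This already gives $\tcodim_l\multi{}\ge\tcodim_l\mzeta$, because the codimension is nonnegative. I do not need the fact that the closure of a stratum contains only strata of smaller dimension.

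\emph{Dimension.} I would use the closedness of $f=\proto{\multi{}}{l}$ together with Corollary~\ref{cor:formal_local}. This identifies $\dim\multi{y}$ with the length of the scheme-theoretic fiber $F_y$, i.e.\ $\dim_\CC\O_{F_y}(F_y)$.

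\begin{enumerate}
\item Since $f$ is finite, $f_*\O_M$ is a coherent sheaf on $W$ and is a finitely generated module.
\item The fiber length $y\mapsto\dim_\CC (f_*\O_M)\otimes\kappa(y)$ is upper semicontinuous on $W$, by Nakayama's lemma.
\item Take $y$ in $\Tlocus{\mzeta}{f}$ approaching $0$. Each such $y$ has fiber length $\dim\mzeta$.
\item Upper semicontinuity then gives $\dim\multi{}=\text{length}(F_0)\ge\dim\mzeta$.
\end{enumerate}

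Alternatively, $f$ is flat: it is finite between smooth varieties of dimensions $m$ and $m+l$ with $l\ge1$, so it is not equidimensional. Flatness therefore fails in general, and semicontinuity is the correct tool.

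The main obstacle is the dimension inequality, because in this setting $f$ is not flat. Concretely, we must check that the upper semicontinuity of fiber rank for coherent sheaves really applies to the scheme-theoretic fibers. By Corollary~\ref{cor:formal_local} this amounts to the identification $\O_{F_y}(F_y)=(f_*\O_M)_y\otimes\kappa(y)$. That identification holds because $f$ is finite, hence affine, and pushforward along an affine morphism commutes with base change to a point.
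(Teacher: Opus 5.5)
Your proof is correct and follows the paper's argument. For the codimension you compare dimensions of strata, specializing to the prototype, where the $\multi{}$-locus is the origin, rather than taking an arbitrary stable map with $\Tlocus{\multi{}}{f}\neq\varnothing$ as the paper does. For the dimension you use upper semicontinuity of the fiber rank of $f_*\O_M$ together with Corollary~\ref{cor:formal_local}, exactly as the paper does.

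The sentence beginning ``Alternatively, $f$ is flat'' contradicts itself and should be deleted. A finite map with $l\ge 1$ is never flat, since flat finite-type morphisms are open. Nothing in your argument depends on that sentence.
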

    \begin{proof}
        Let $f:M\to N$ be a stable map of relative dimension $l$  such that $\Tlocus{\multi{}}{f}\neq \varnothing$. Then $\Tlocus{\multi{}}{f}\subseteq\overline{\Tlocus{\mzeta}{f}}$, thus 
        $\dim \Tlocus{\multi{}}{f}\le \dim \Tlocus{\mzeta}{f}\,.$ This implies the first part of the proposition, cf. Remark \ref{rem:codim}. \\
        For the second part, consider the sheaf $\mathcal{F}=f_*\O_M$. For any $y\in N$ denote its fiber at $y$ by $\mathcal{F}(y)$.  We have
        $\dim\multi{y}=\dim_\CC \mathcal{F}(y)\,$
        cf. Corollary \ref{cor:formal_local} and \cite[Ex. 14.3.M]{Vakil}. The claim follows from the semicontinuity of the rank 
        of a finite type quasicoherent sheaf \cite[Ex. 14.3.J]{Vakil}.
    \end{proof}
\subsection{Dependence on \texorpdfstring{$l$}{l}}
In this section, we investigate the dependence of the stable hierarchy $\le_l$ on  the natural number $l$. The first thing to note is that the set of stable multisingularities depends on $l$. 
Next, suppose that $\multi{}$ and $\mzeta$ are multisingularities that occur for natural numbers $l_1$ and $l_2$. It is natural to ask whether the relation $\multi{}\le_{l_1}\mzeta$ is equivalent to $\multi{}\le_{l_2}\mzeta$.
We provide a partial answer.
\begin{pro}\label{pro:infty}
	  Let $\multi{}$ and $\mzeta$ be multisingularities that occur for $l\in\NN$. Suppose that $\multi{}\le_l\mzeta$. Then $\multi{}\le_{l+1}\mzeta$.
\end{pro}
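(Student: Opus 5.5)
The plan is to produce, from a stable map of relative dimension $l$, a stable map of relative dimension $l+1$ whose multisingularity stratification near a point looks like the original one. Then I would use criterion (2) of Definition~\ref{pro:defT}.

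\textbf{Reduction to prototypes.} By Definition~\ref{pro:defT}(3) applied for $l$, the origin lies in $\overline{\Tlocus{\mzeta}{\proto{\multi{}}{l}}}$, where $\proto{\multi{}}{l}:\bigsqcup V_i\to W$. Consider the composite
$$g=\iota\circ\proto{\multi{}}{l}:\bigsqcup V_i\to W\times\CC,\qquad \iota(w)=(w,0).$$
This is a finite germ of relative dimension $l+1$, and its local algebras coincide with those of $\proto{\multi{}}{l}$. Indeed, appending a zero coordinate function does not change the ideal $(f_1,\dots,f_{m+l})$, so it does not change $Q_f$. The fibres of $g$ over $(w,0)$ are the fibres of $\proto{\multi{}}{l}$ over $w$, and the fibres over points with nonzero last coordinate are empty. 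Consequently $\Tlocus{\mzeta}{g}=\Tlocus{\mzeta}{\proto{\multi{}}{l}}\times\{0\}$ for every nonempty $\mzeta$, and $0\in\Tlocus{\multi{}}{g}\cap\overline{\Tlocus{\mzeta}{g}}$. (The case where $\mzeta$ is empty is trivial: the empty multisingularity is maximal, as noted in the proof of Corollary~\ref{cor:dense_sing}.)

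\textbf{Stability of $g$.} The map $g$ is not yet known to be stable, and this is the main obstacle. I would first try to show directly that $g$ is stable, via the infinitesimal (Mather) criterion. Stability of $\proto{\multi{}}{l}$ gives $tf(\theta_V)+\omega f(\theta_W)=\theta(f)$. For $g$ one additionally needs to generate the new normal component $\partial/\partial w_{new}$ times $\O_V$. This would require $\O_V$ to be generated over $f^*\O_W$ by $1$ alone, modulo the relevant ideal. That fails in general: for the fold, $g(x)=(x^2,0)$ is not stable, since its versal unfolding is $(x^2,xu)$-type.

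So I would instead replace $g$ by a stabilization. Let $G$ be the $\A$-miniversal unfolding of the multigerm $g$, which is stable. For generic points of its parameter space, a $\mathcal{K}$-versal unfolding has multisingularity stratification on the slice $u=0$ equal to that of $g$. The key claim is then that the inclusion of the slice $u=0$ into the target of $G$ pulls back the stratification compatibly, i.e.\ points of $W\times\{0\}\times\{0\}$ with multisingularity $\mzeta$ for $g$ have multisingularity $\mzeta$ for $G$. This holds because local algebras are preserved under unfolding at points of the slice: the fibre of $G$ over $(y,0)$ equals the fibre of $g$ over $y$ scheme-theoretically.

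\textbf{Conclusion.} The origin then lies in $\Tlocus{\multi{}}{G}$ and in the closure of $\Tlocus{\mzeta}{G}$. Here $G$ is a stable multigerm of relative dimension $l+1$, which can be realized by an actual stable map, e.g.\ the prototype $\proto{\multi{}}{l+1}\times\id$ together with Proposition~\ref{pro:local}. Definition~\ref{pro:defT}(2) then yields $\multi{}\le_{l+1}\mzeta$. The delicate point to verify carefully is exactly the scheme-theoretic fibre statement together with the fact that the stable germ $G$ at $0$ lies in the multisingularity $\multi{}$. Both follow from Corollary~\ref{cor:formal_local}, since the fibre of $G$ over $0$ is the fibre of $g$ over $0$, whose coordinate ring is $Q$.
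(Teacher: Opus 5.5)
Your strategy is the paper's. You stabilize $g=(\proto{\multi{}}{l},0)$ by an unfolding and use that over the slice $u=0$ the fibres of the unfolding agree scheme-theoretically with those of $g$; this is Lemma~\ref{lem:proto2} and Corollary~\ref{cor:proto1}. Skipping the paper's reduction to a monosingularity via Proposition~\ref{pro:splits} is harmless.

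\textbf{The gap.} The step ``let $G$ be the $\A$-miniversal unfolding of $g$'' fails in general. A finite-parameter $\A$-versal unfolding exists only when $g$ has finite $\mathcal{A}_e$-codimension. By the Mather--Gaffney criterion this forces $g$ to be stable away from the origin, and it is not. The normal-direction computation you did for the fold applies at every target point over which $\proto{\multi{}}{l}$ has a fibre of length at least $2$. There the new component $\O\cdot\partial_{\mathrm{new}}$, taken modulo the pulled-back maximal ideal, is the algebra of the fibre, of dimension $\ge 2$. Only the constant field $\partial_{\mathrm{new}}$ is available to generate it.

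For instance, take $\multi{}=A_1$, $l=1$. Then $g(x,y)=(x^2,xy,y,0)$ is unstable along the whole double-point curve $\{(s,0,0,0)\}$. There two immersed sheets $\CC^2\to\CC^4$ meet with both tangent planes inside $\CC^3\times 0$, hence non-transversally. So $g$ has infinite $\mathcal{A}_e$-codimension and no miniversal unfolding. Your sentence about generic points of the parameter space of a $\K$-versal unfolding is also beside the point: only the special slice $u=0$ matters.

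\textbf{The repair.} You only need \emph{some} stable unfolding of $g$. It exists because $g$ is finite, i.e.\ of finite $\K$-codimension, and your fibre argument works for every unfolding $(\tilde g(x,u),u)$.

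The paper writes one down for a monogerm: $F(\xvar,\tvarr)=(f(\xvar),\sum t_ih_i(\xvar),\tvarr)$ as in \eqref{w:unfolding}, where $h_1,\dots,h_k$ are monomials whose classes form a basis of $\m\subset Q$. It is stable by Mather's infinitesimal criterion. The first $n$ components are handled by stability of $f$. The new component is generated modulo $f^*\m$ by $1$ (the target translation) and $h_1,\dots,h_k$ (the initial velocities), which together span $Q$. For a multigerm, either do the same with $Q$ the product algebra, or first reduce to monosingularities as the paper does.

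With $G$ replaced by such an $F$, your conclusion goes through. $F$ is a stable germ of relative dimension $l+1$ whose fibre over $0$ has coordinate ring $Q$, so it lies in $\multi{}$, and $0\in\overline{\Tlocus{\mzeta}{F}}$.
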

We will prove this fact later in this section. It motivates the following definition of the limit version of the stable hierarchy.
\begin{df}
    Let $\multi{}$ and $\mzeta$ be multisingularities. We write $\multi{}\le_\infty\mzeta$ if there exists a positive integer $l$ such that $\multi{}\le_l\mzeta$.
\end{df} 
We do not know whether the relation $\le_l$ can depend on $l$ in general. In Corollary \ref{cor:order} we prove that under the assumption $\dim\multi{}=\dim\mzeta$ it is independent of $l$. Moreover, if $\multi{}$ occurs for some $l$ and $\multi{}\le_\infty\mzeta$ then $\mzeta$ also occurs for this $l$. The example below shows that the second claim is not true in general.
\begin{ex} \label{ex:C3C41}
    Let us recall notation for algebras, cf. Section~\ref{sec:algebras}:
    $$
    C_{41}=\CC[x,y,z]/(xy,xz,yz,x^2-y^2,y^2-z^2)\,,\qquad
    C_3=\CC[x,y,z]/(x,y,z)^2
    $$
    We have $C_{41}\le_3 C_3$ and $C_{41}$ occurs for $l=2$. Yet, $C_3$ does not occur for $l=2$.
\end{ex}
\begin{con}
    Let $\multi{}$ and $\mzeta$ be multisingularities that occur for $l\in \NN$. Suppose that $\multi{}\le_\infty\mzeta$. We conjecture that $\multi{}\le_l\mzeta$.
\end{con}
Now we focus on the proof of Proposition \ref{pro:infty}. We need some notation. \\
Consider a stable germ $f:(\CC^m,0)\to (\CC^n,0)$ and pick a representing map
\begin{align} \label{w:unfolding0}
    f(\underline{x})=(f_1(\underline{x}),\dots, f_n(\underline{x}))\,.
\end{align}
Let $Q$ be its local algebra at $0$ and $\m$ the maximal ideal of $Q$. Let $h_1,\dots, h_k$ be monomials, whose classes in $Q$ form a basis of the ideal $\m$ treated as a vector space. The germ $(f_1,\dots, f_n,0):\CC^m\to \CC^{n+1}$ is no longer stable.
Its stable unfolding $F:\CC^{m+k}\to \CC^{n+1+k}$ is of the form:
\begin{align} \label{w:unfolding}
F(\xvar,\tvarr)=\left(f_1(\xvar),\dots, f_n(\xvar), t_1h_1(\xvar)+\dots+t_kh_k(\xvar),\tvarr\right)\,.
\end{align}
\begin{rem}
    The above construction is used to obtain prototypes of singularities. Suppose that $f$ is the prototype of a singularity $\eta$ for $l$. Then $F$ is the prototype of $\eta$ for $l+1$, cf. \cite[Example 2.9]{JKRRmult}.
\end{rem}
For  points $z\in\CC^m$ and $w\in\CC^{m+k}$, let $Q^f_z$ and $Q^F_w$ denote the local algebras of the maps $f$ and $F$ at the points $z$ and $w$.
We need the following technical lemma.
\begin{lemma} \label{lem:proto2}
	Consider points $\rvar=(r_1,\dots,r_k)\in \CC^k$ and $\avar=(a_1,\dots,a_m)\in \CC^m$. We have
	$$
	Q^F_{\avar,\rvar} \simeq \frac{Q^f_\avar}
	{(s)}\,,
	$$
	where $s=\sum r_i(h_i(\xvar+\avar)-h_i(\avar))$.
\end{lemma}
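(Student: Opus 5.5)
The plan is to compute the local algebra of $F$ at $(\avar,\rvar)$ directly from the definition \eqref{w:local_alg}, after translating the point to the origin. The map is
\[
F(\xvar,\tvarr)=\Big(f_1(\xvar),\dots,f_n(\xvar),\textstyle\sum_i t_ih_i(\xvar),\tvarr\Big).
\]
The target point is $F(\avar,\rvar)$. Set $\xvar=\avar+\xvar'$ and $\tvarr=\rvar+\tvarr'$, and subtract the value at the base point from each component. The local algebra $Q^F_{\avar,\rvar}$ is then the quotient of $\CC[[\xvar',\tvarr']]$ by the ideal generated by three families:
\begin{enumerate}
\item the elements $f_j(\avar+\xvar')-f_j(\avar)$;
\item the elements $t'_i$ (from the last $k$ components, which are just the coordinates $t_i$ shifted);
\item the single element $\sum_i (r_i+t'_i)h_i(\avar+\xvar')-\sum_i r_ih_i(\avar)$.
\end{enumerate}

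The first step is to quotient by the $t'_i$. This sets $\tvarr'=0$ and reduces the ring to $\CC[[\xvar']]$. The third generator then becomes exactly $s=\sum_i r_i\big(h_i(\xvar'+\avar)-h_i(\avar)\big)$.

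The second step is to recognize what remains. The quotient $\CC[[\xvar']]/\big(f_j(\avar+\xvar')-f_j(\avar)\big)_j$ is by definition the local algebra $Q^f_\avar$ of $f$ at $\avar$. So $Q^F_{\avar,\rvar}\simeq Q^f_\avar/(s)$, where $s$ is read through the image of the coordinates $\xvar$ (shifted by $\avar$). This is the claimed isomorphism.

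The main point requiring care is the formal-versus-local identification. The local algebra is defined using power series, but we are quotienting step by step. Quotienting $\CC[[\xvar',\tvarr']]$ by the ideal $(\tvarr')$ gives $\CC[[\xvar']]$, and the remaining operations are just quotients by further elements, so this step is harmless.

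There is one subtlety: if $\avar$ is not in the preimage of the relevant point, or the local algebra is not finite, the statement should still be read as an equality of (possibly zero) quotients. Finiteness of $F$ guarantees we may use the polynomial localized ring instead, by the Proposition on $\CC[x]_\m$ versus $\CC[[x]]$ preceding Corollary~\ref{cor:formal_local}. Apart from this, the proof is a direct substitution, and I expect no real obstacle beyond bookkeeping of the translation by $(\avar,\rvar)$.
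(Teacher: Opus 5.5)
Your proposal is correct and is essentially the paper's proof: translate to $(\avar,\rvar)$, use the last $k$ components to kill $\tvarr'$, so that the remaining new generator reduces to $s$, and recognize the rest as the presentation $\CC[[\xvar]]/(g_1,\dots,g_n)$ of $Q^f_\avar$. Your closing paragraph on formal versus localized rings is unnecessary, since everything already takes place in power series rings, and the quotients are never zero because all generators vanish at the base point; it does no harm, though.
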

\begin{proof}
	Let $g_i(\xvar)=f_i(\avar+\xvar)-f_i(\avar)$.
We have
	$$
	Q^f_{\avar}\simeq
	\frac{\CC[[\xvar]]}{(g_1,\dots,g_n)}
	\,.
	$$
	In local coordinates near the point $(\avar,\rvar)$ the map $F$ is of the form
	\begin{align*}
	G(\xvar,\tvarr)=&\, F(\avar+\xvar,\rvar+\tvarr)-F(\avar,\rvar)
	\\=&
	\big(g_1(\xvar),\dots, g_n(\xvar),\sum (t_i+r_i)h_i(\xvar+\avar)-\sum r_ih_i(\avar),\tvarr\big)\,.
	\end{align*}
	Therefore, its local algebra is
	\begin{align*}
		Q^F_{\avar,\rvar}
		&\simeq\frac{\CC[[\xvar,\tvarr]]}{\big(g_1,\dots,g_n,\sum (t_i+r_i)h_i(\xvar+\avar)-\sum r_ih_i(\avar),\tvarr\big)} \\
		&\simeq\frac{\CC[[\xvar,\tvarr]]}{\big(g_1,\dots,g_n,\sum r_i(h_i(\xvar+\avar)-h_i(\avar)),\tvarr\big)} \\
		&\simeq\frac{\CC[[\xvar]]}{\big(g_1,\dots,g_n,\sum r_i(h_i(\xvar+\avar)-h_i(\avar))\big)}  \\
		&\simeq\frac{Q^f_\avar}{\sum r_i(h_i(\xvar+\avar)-h_i(\avar))}
		\,.
	\end{align*}
\end{proof}
\begin{cor} \label{cor:proto1}
	Consider a subspace $\CC^n\times0 \subset \CC^{n+k+1}$. For any  multisingularity $\multi{}$ we have
	$$
	\Tlocus{\multi{}}{F}\cap (\CC^n\times 0 )= \Tlocus{\multi{}}{f} \,.
	$$
\end{cor}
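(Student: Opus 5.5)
The plan is to identify the fiber of $F$ over a point of $\CC^n\times 0$ with the fiber of $f$ over the corresponding point of $\CC^n$, together with all local algebras, and then read off the multisingularity.

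\emph{Step 1: the fiber.} Take $y\in\CC^n$ and regard it as the point $(y,0,0)\in\CC^n\times\CC\times\CC^k=\CC^{n+1+k}$. By Formula~\eqref{w:unfolding}, a point $(\avar,\rvar)$ lies in $F^{-1}(y,0,0)$ exactly when three conditions hold: $f(\avar)=y$, $\rvar=0$, and $\sum r_ih_i(\avar)=0$. The last condition is automatic once $\rvar=0$. Hence
\[
F^{-1}(y,0,0)=\{(\avar,0)\mid \avar\in f^{-1}(y)\},
\]
so the two fibers are in natural bijection.

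\emph{Step 2: the local algebras.} At each point $(\avar,0)$ of this fiber, apply Lemma~\ref{lem:proto2} with $\rvar=0$. The element $s=\sum r_i(h_i(\xvar+\avar)-h_i(\avar))$ then vanishes identically, so
\[
Q^F_{\avar,0}\simeq Q^f_{\avar}.
\]
Thus the singularity of $F$ at $(\avar,0)$ coincides with the singularity of $f$ at $\avar$.

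\emph{Step 3: the multisingularity.} By Steps~1 and~2, the multiset of local algebras over $(y,0,0)$ for $F$ equals the multiset over $y$ for $f$. By Corollary~\ref{cor:formal_local} and the definition of $\multi{y}$, we get $\multi{(y,0,0)}(F)=\multi{y}(f)$. Both inclusions of the claimed equality of loci follow directly.

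\emph{Caveat.} The only point needing care is that Lemma~\ref{lem:proto2} is stated for germs at arbitrary points $\avar$, not only at the origin, which is exactly what we use. There is also a formality: $f$ is a germ, so "the fiber" should be read for a chosen representative on a neighbourhood where $f$ is finite, as in Formula~\eqref{w:unfolding0}. Once that representative is fixed, there is no real obstacle, and the argument is a direct consequence of Lemma~\ref{lem:proto2}.
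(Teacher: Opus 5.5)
Your proof is correct and follows the paper's intended argument: the paper states this corollary without proof directly after Lemma~\ref{lem:proto2}, and the implicit reasoning is the same as yours. Points of $F^{-1}(\CC^n\times 0)$ have $\tvarr=0$, so the fiber over $(y,0,0)$ is $f^{-1}(y)\times 0$, and the lemma with $\rvar=0$ gives $Q^F_{\avar,0}\simeq Q^f_{\avar}$. The one tacit point in your Step~3 is that singularities of $F$ (relative dimension $l+1$) and of $f$ (relative dimension $l$) are compared through their local algebras, and the paper uses the same identification.
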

We are now ready to prove Proposition \ref{pro:infty}.
\begin{proof}[Proof of Proposition  \ref{pro:infty}]
    Due to Proposition \ref{pro:splits} it is enough to consider the case when $\multi{}$ is a {\em mono}singularity. Its prototype $\proto{\multi{}}{l}$ is a map between affine spaces, such as $f$ in \eqref{w:unfolding0}. Let $F$ be a stable unfolding constructed by the formula \eqref{w:unfolding} for $f=\proto{\multi{}}{l}$. \\
    By Definition \ref{pro:defT} (3) we have $0\in \overline{\Tlocus{\mzeta}{\proto{\multi{}}{l}}}$. Corollary \ref{cor:proto1} implies that $0\in \overline{\Tlocus{\mzeta}{F}}$. The map $F$ is stable of relative dimension $l+1$, thus by Definition \ref{pro:defT} (2) we have $\multi{}\le_{l+1}\mzeta$.
\end{proof}
As another implication of Lemma \ref{lem:proto2} we present a result about the hierarchy between algebras of different dimensions.
\begin{pro} \label{pro:quot}
	Let $(Q,\m)$ be a local algebra that occurs for $l$ and $s\in \m$. We have $Q \le_{l+1} Q/(s)$.
\end{pro}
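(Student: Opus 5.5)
Let $f=\proto{Q}{l}:\CC^m\to\CC^n$ be the prototype of $Q$ for $l$, with $Q^f_0\simeq Q$, and let $F$ be its stable unfolding from \eqref{w:unfolding}, constructed from monomials $h_1,\dots,h_k$ whose classes form a basis of $\m$. By the Remark following \eqref{w:unfolding}, $F$ is the prototype of $Q$ for $l+1$. By Definition~\ref{pro:defT}(3) it therefore suffices to show that $0\in\overline{\Tlocus{\mzeta}{F}}$, where $\mzeta$ is the multisingularity of $Q/(s)$. Note that $Q/(s)$ is local; we may assume $s\neq 0$, since otherwise the claim is trivial.

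Write $s=\sum_i r_ih_i$ in $Q$ with $\rvar=(r_1,\dots,r_k)\in\CC^k$. For $\lambda\in\CC^*$ consider the points $(0,\lambda\rvar)\in\CC^{m+k}$. Lemma~\ref{lem:proto2} with $\avar=0$ gives
$$Q^F_{0,\lambda\rvar}\simeq Q/(\lambda s)=Q/(s),$$
because the $h_i$ lie in $\m$ and hence $h_i(0)=0$. These points converge to $0$ as $\lambda\to 0$.

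We are working with target loci, so we need the full fiber of $F$ over $y_\lambda:=F(0,\lambda\rvar)$, and not only the local algebra at one preimage. The scheme-theoretic fiber $F^{-1}(y_\lambda)$ may contain other points besides $(0,\lambda\rvar)$. In that case the multisingularity $\multi{y_\lambda}$ is $Q/(s)$ together with extra components, so we cannot immediately conclude $y_\lambda\in\Tlocus{\mzeta}{F}$. This is the main obstacle.

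My first attempt is to avoid the issue using the source locus. We have $(0,\lambda\rvar)\in\Qlocus{Q/(s)}{F}$, and these points accumulate at $0$, so $0\in\overline{\Qlocus{Q/(s)}{F}}$. Since $F$ is closed (Remark~\ref{rem:closed}),
$$0=F(0)\in F\bigl(\overline{\Qlocus{Q/(s)}{F}}\bigr)=\overline{F(\Qlocus{Q/(s)}{F})}.$$
Corollary~\ref{cor:dense_sing} states that $\Tlocus{Q/(s)}{F}$ is dense in $F(\Qlocus{Q/(s)}{F})$. Hence $0\in\overline{\Tlocus{Q/(s)}{F}}$, and we obtain $Q\le_{l+1}Q/(s)$.

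This argument needs $Q/(s)$ to occur for $l+1$, since both Definition~\ref{pro:defT} and Corollary~\ref{cor:dense_sing} assume it. I would justify this from the construction. The germ of $F$ at $(0,\lambda\rvar)$ is the germ at a point of a stable map, so it is stable, and its local algebra is $Q/(s)$. Therefore $Q/(s)$ occurs for relative dimension $l+1$.
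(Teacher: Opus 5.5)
Your proposal is correct and follows the paper's argument. Both use the same unfolding $F$, the same points $(0,\lambda\rvar)$ with local algebra $Q/(s)$ from Lemma~\ref{lem:proto2}, and Corollary~\ref{cor:dense_sing} to pass from the source locus to the target locus. The differences are cosmetic. The paper places the image points $F(0,\rvar/i)$ directly in $\overline{\Tlocus{Q/(s)}{F}}$, which needs only continuity of $F$; continuity is also all you need for your inclusion $F(\overline{\Qlocus{Q/(s)}{F}})\subseteq\overline{F(\Qlocus{Q/(s)}{F})}$. You also explicitly justify that $Q/(s)$ occurs for $l+1$, which the paper leaves implicit.
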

\begin{proof}
    Let $\eta$ be the singularity corresponding to the algebra $Q$, and $\proto{\eta}{l}:\CC^m\to \CC^n$ its prototype. Let $F$ be a stable unfolding constructed by the formula \eqref{w:unfolding} for $f=\proto{\eta}{l}$. \\
    Elements $h_i$ form a basis of the ideal $\m$, thus there exist $r_1,\dots,r_k\in \CC^k$ such that
	$$r_1h_1+\dots+r_kh_k=s\,.$$
	Let $0_{n+1},0_m$ denote the origins in the vector spaces $\CC^{n+1}$ and $\CC^{m}$ respectively. Consider the sequence
    $$y_i=\frac{1}{i}\cdot(0_{n+1}, r_1,\dots,r_k) \in \CC^{n+k+1}\,.$$
    The point $\frac{1}{i}\cdot(0_{m}, r_1,\dots,r_k)$ lies in the preimage $F^{-1}(y_i)$. By Lemma \ref{lem:proto2} and Corollary \ref{cor:dense_sing} we have $y_i\in F(\Qlocus{Q/(s)}{F})\subset \overline{\Tlocus{Q/(s)}{F}}$.
    Thus,
    $$
    0\in\overline{\{y_i\}_{i\in\NN}} \subset \overline{\Qlocus{Q/(s)}{F}}\,.
    $$
    The claim follows from Definition \ref{pro:defT} (2).
\end{proof}
\begin{cor}
    Let $Q$ and $R$ be finite dimensional local algebras. Suppose that there exists a surjective morphism $\psi:Q\to R$. Then $Q\le_\infty R$.
\end{cor}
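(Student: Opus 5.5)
The corollary should follow from Proposition~\ref{pro:quot} by induction on $\dim Q-\dim R$, combined with transitivity of $\le_l$ and the monotonicity in $l$ from Proposition~\ref{pro:infty}.

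\emph{Setup.} Since $\psi$ is surjective, $R\simeq Q/I$ with $I=\ker\psi$. As $R$ is local and nonzero, $I\subseteq\m$. If $I=0$, then $Q\simeq R$ and reflexivity gives the claim: $Q\le_l Q$ for any $l$ for which $Q$ occurs. Such an $l$ exists because any local algebra occurs for large $l$, via the genotype/miniversal unfolding construction, e.g.\ $l\ge l_0$.

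\emph{Filtration.} Since $I$ is finite dimensional, choose elements $s_1,\dots,s_r\in I$ generating $I$, and set
\[
Q_j=Q/(s_1,\dots,s_j).
\]
This gives a chain of surjections $Q=Q_0\onto Q_1\onto\dots\onto Q_r=R$. Each $Q_j$ is local with maximal ideal $\m_j$ equal to the image of $\m$, and the image $\bar s_{j+1}$ of $s_{j+1}$ lies in $\m_j$. Moreover $Q_{j+1}=Q_j/(\bar s_{j+1})$. Pick $l_0$ such that $Q$ occurs for $l_0$. Every $Q_j$ also occurs for all sufficiently large $l$.

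\emph{Each step.} Proposition~\ref{pro:quot} requires $Q_j$ to occur for some $l$, and then gives $Q_j\le_{l+1}Q_{j+1}$. Choose $l_j$ such that $Q_j$ occurs for $l_j$. Then $Q_j\le_{l_j+1}Q_{j+1}$. By Proposition~\ref{pro:infty}, this relation persists for all $l\ge l_j+1$. Take $L=\max_j(l_j+1)$. Then $Q_j\le_L Q_{j+1}$ for every $j$, and all $Q_j$ occur for $L$, since occurrence is monotone in $l$ by the earlier proposition.

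\emph{Transitivity.} I expect the only point needing care to be that $\le_L$ is transitive; this is why the definition is called a partial order. With the loci formulation (Definition~\ref{pro:defT}(1)), fix a stable map $f$ of relative dimension $L$. Then
\[
\Tlocus{Q_j}{f}\subset\overline{\Tlocus{Q_{j+1}}{f}}
\quad\text{implies}\quad
\overline{\Tlocus{Q_j}{f}}\subset\overline{\Tlocus{Q_{j+1}}{f}},
\]
so the inclusions chain together. Therefore $Q\le_L R$, and hence $Q\le_\infty R$.
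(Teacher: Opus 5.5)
Your argument is correct and is exactly the one the paper leaves implicit (the corollary is stated without proof right after Proposition~\ref{pro:quot}). You write $R\simeq Q/(s_1,\dots,s_r)$ with $s_j\in\ker\psi\subseteq\m$, apply Proposition~\ref{pro:quot} once per generator, move all relations to a common $L$ via Proposition~\ref{pro:infty}, and chain them by transitivity of $\le_L$. As a minor simplification, the proof of Proposition~\ref{pro:quot} already shows that $Q_{j+1}$ occurs for $l_j+1$, since the stable unfolding $F$ has a point with local algebra $Q_{j+1}$. So you can take $L=l_0+r$ without appealing separately to occurrence of the intermediate quotients.
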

\subsection{Source version}
Up to now we considered the target version of the stable hierarchy. It describes the relative position of the singularity loci $\Tlocus{\multi{}}{f}$ in the target of a stable map $f$. In this section, we define a corresponding notion in the source, describing the relative position of the loci $\Slocus{\eta,\multi{}}{f}$.  We show that the obtained partial order is fully determined by the hierarchy in the target and combinatorics, yet not in an obvious way, cf.
Example \ref{ex:SandT}.
We omit most of the proofs, as they are analogous to their target counterparts.
\begin{df}
		Let $(\eta,\multi{})$ and $(\zeta,\mzeta)$ be S-multisingularities that occur for an $l\ge 1$. We write $(\eta,\multi{}) \le_l (\zeta,\mzeta)$ if the following conditions are satisfied
	\begin{enumerate}
		\item For any stable map $f$ of relative dimension $l$  we have
		$\Slocus{\eta,\multi{}}{f} \subset \overline{\Slocus{\zeta,\mzeta}{f}}\,.$
		\item There exists a stable map $f$ of relative dimension $l$  such that
		$\Slocus{\eta,\multi{}}{f} \cap \overline{\Slocus{\zeta,\mzeta}{f}}\neq \varnothing\,.$
		\item Let $\proto{\multi{}}{l}$ be the prototype of $\multi{}$ for $l$. Let $0_\eta$ be the origin of the component of the source corresponding to the singularity $\eta$, cf. Proposition \ref{pro:proto_multi}. We have  $0_\eta\in \overline{\Slocus{\zeta,\mzeta}{\proto{\multi{}}{l}}}$.
	\end{enumerate}

\end{df}
\begin{pro}
     Let $(\eta,\multi{})$ and $(\zeta,\mzeta)$ be S-multisingularities that occur for $l \ge 1$. Suppose that $(\eta,\multi{})\le_l(\zeta,\mzeta)$. Then $(\eta,\multi{})\le_{l+1}(\zeta,\mzeta)$.
\end{pro}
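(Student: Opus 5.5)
We follow the proof of Proposition~\ref{pro:infty} and transport it to the source. By condition (3) of the source definition, the hypothesis $(\eta,\multi{})\le_l(\zeta,\mzeta)$ means $0_\eta\in\overline{\Slocus{\zeta,\mzeta}{\proto{\multi{}}{l}}}$. By condition (2), it suffices to find a stable map $F$ of relative dimension $l+1$ and a point of $\Slocus{\eta,\multi{}}{F}$ lying in $\overline{\Slocus{\zeta,\mzeta}{F}}$.

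\emph{Step 1: the map $F$.} Let $f=\proto{\multi{}}{l}:\bigsqcup_i U_i\to\CC^n$ be the multigerm prototype, given by Proposition~\ref{pro:proto_multi}. We construct the unfolding of formula~\eqref{w:unfolding} componentwise. On component $U_i$, choose monomials $h^{(i)}_1,\dots,h^{(i)}_{k_i}$ forming a basis of the maximal ideal of the local algebra at $0_i$. Put $k=\sum_i k_i$ and define
\[
F(\xvar,\tvarr)=\Big(f(\xvar),\ \sum_j t^{(i)}_j h^{(i)}_j(\xvar),\ \tvarr\Big)\qquad \text{for } \xvar\in U_i.
\]
The parameters $\tvarr=(t^{(i)}_j)$ are shared by all components. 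Each component, with the parameters of the other components entering trivially, is the stable $(l+1)$-unfolding of the corresponding monogerm. The intended conclusion is that $F$ is a stable multigerm of relative dimension $l+1$, hence $\A$-equivalent to $\proto{\multi{}}{l+1}$ up to a trivial factor.

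If this step causes trouble, a cleaner route is to reduce to monosingularity data. Use Proposition~\ref{pro:proto_multi} at level $l+1$ to write $\proto{\multi{}}{l+1}$ as a product of the $\proto{\eta_i}{l+1}$. Then for each factor apply the unfolding $\proto{\eta_i}{l}\mapsto\proto{\eta_i}{l+1}$ from the Remark after \eqref{w:unfolding}.

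\emph{Step 2: restriction to the slice.} Next we establish the source analogue of Corollary~\ref{cor:proto1}:
\[
\Slocus{\zeta,\mzeta}{F}\cap\Big(\bigsqcup_i U_i\times\{0\}\Big)=\Slocus{\zeta,\mzeta}{f}.
\]
The argument is as follows. At $\tvarr=0$, Lemma~\ref{lem:proto2} with $\rvar=0$ gives $Q^F_{\avar,0}\simeq Q^f_\avar$ at every source point. Moreover, the fibre of $F$ over $(y,0,0)$ with $y\in\CC^n$ equals $f^{-1}(y)\times\{0\}$. The reason is that $F(\xvar,\tvarr)=(y,0,0)$ forces $\tvarr=0$, and then the extra coordinate vanishes automatically. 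Hence both the singularity at the point and the multisingularity of its image agree for $f$ and $F$.

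\emph{Step 3: conclusion.} Since $0_\eta\in\overline{\Slocus{\zeta,\mzeta}{f}}$, Step~2 gives $(0_\eta,0)\in\overline{\Slocus{\zeta,\mzeta}{F}}$. By Step~2 again, $(0_\eta,0)\in\Slocus{\eta,\multi{}}{F}$. Condition (2) then yields $(\eta,\multi{})\le_{l+1}(\zeta,\mzeta)$.

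The main obstacle is Step~1, verifying that the componentwise unfolding is stable. Step~2 also requires some care: the fibre computation has to be done over the whole slice, not only at the origin.
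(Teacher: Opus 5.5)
\paragraph{Step~1 fails for multisingularities.} Your Step~1 breaks down as soon as $\multi{}=(\eta_1,\dots,\eta_r)$ has $r\ge 2$ elements. The shared-parameter unfolding $F$ is then not stable, so neither condition (2) nor condition (3) of the source definition can be applied to it. The ``main obstacle'' you name is therefore not a verification left to do but a false claim.

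Each branch of $F$ is indeed $\proto{\eta_i}{l+1}$ times an identity. But a multigerm with stable branches is stable only if the analytic strata of its branches are in general position. Here all of them lie in the hyperplane $\{s=0\}$ cut out by the single new target coordinate $s$.

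The dimension count makes this concrete. The target of $F$ has dimension $\tcodim_l(\multi{})+1+k$. By \eqref{w:scodim}, $\tcodim_{l+1}(\multi{})=\tcodim_l(\multi{})+\dim\multi{}=\tcodim_l(\multi{})+r+k$. Since $0\in\Tlocus{\multi{}}{F}$, stability of $F$ would give a nonempty stratum whose codimension exceeds the dimension of the target.

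The simplest case is $\multi{}=A_0^2$, where $k=0$. Here $F$ sends two $l$-planes into a common hyperplane of $\CC^{2l+1}$, meeting at $0$. A stable map of relative dimension $l+1$ from an $l$-dimensional source has no double points, because $\tcodim_{l+1}(A_0^2)=2l+2$.

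\paragraph{Repairing the argument.} Your fallback is the right fix, but Steps~2 and~3 must be redone for that map, which has one new coordinate per branch. Its target is $\prod_i(W_i\times\CC_{s_i}\times\CC^{k_i})$. Branch $i$ is $G_i\times\id$, where $G_i(v,\tvarr^{(i)})=(\proto{\eta_i}{l}(v),\sum_j t^{(i)}_jh^{(i)}_j(v),\tvarr^{(i)})$ and the identity acts on the other factors.

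Take the target slice where all $s_j$ and all parameters vanish. On branch $i$, take the source slice $V_i\times\{0\}\times\prod_{j\ne i}(W_j\times\{0\})$. Over a slice point, the identity factor forces $s_j=0$ and $\tvarr^{(j)}=0$ for $j\ne i$. The last coordinates of $G_i$ force $\tvarr^{(i)}=0$, and then $s_i$ vanishes automatically. Lemma~\ref{lem:proto2} with $\rvar=0$ identifies the local algebras branch by branch.

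Hence $\Slocus{\zeta,\mzeta}{\proto{\multi{}}{l+1}}$ meets the source slice exactly in a copy of $\Slocus{\zeta,\mzeta}{\proto{\multi{}}{l}}$. This gives $0_\eta\in\overline{\Slocus{\zeta,\mzeta}{\proto{\multi{}}{l+1}}}$, which is condition (3) at level $l+1$.

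\paragraph{Comparison with the paper.} The paper omits this proof as analogous to Proposition~\ref{pro:infty}. That route uses Proposition~\ref{pro:splitsS} to reduce to $(\eta,\{\eta\})\le_l(\zeta,\mzeta_1)$ together with target relations $\eta_i\le_l\mzeta_i$, which Proposition~\ref{pro:infty} handles. For a monogerm the unfolding \eqref{w:unfolding} is already stable, so your Step~1 issue never arises there. Your repaired direct argument avoids the splitting proposition, at the cost of tracking one new coordinate per branch.
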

\begin{pro} \label{pro:splitsS}
	Let $(\eta,\multi{})$ and $(\zeta,\mzeta)$ be S-multisingularities that occur for $l\ge 1$. Introduce an order on the multisingularity $\multi{}=(\eta_1,\dots,\eta_k)$, such that $\eta=\eta_1$. We have $(\eta,\multi{}) \le_l (\zeta,\mzeta)$ if and only if there exist multisingularities $\mzeta_1,\dots,\mzeta_{k}$ such that:
    \begin{enumerate}
        \item The multisingularities $\mzeta_i$ sum up to $\mzeta$, i.e. $\mzeta=\mzeta_1+\dots+\mzeta_{k}$.
        \item For any index $i\ge 2$ we have $\eta_i \le_l \mzeta_i$ in the target stable hierarchy.
        \item We have $\zeta\in\mzeta_1$ and $(\eta,\{\eta\}) \le_l (\zeta,\mzeta_1)$ in the source stable hierarchy.
    \end{enumerate}
    \end{pro}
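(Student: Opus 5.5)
We follow the proof of Proposition~\ref{pro:splits}, working with the prototype of $\multi{}$ and criterion (3) of the source definition. Let $\proto{\eta_i}{l}:V_i\to W_i$ be the prototypes and $W=\prod_i W_i$. By Proposition~\ref{pro:proto_multi},
$$\proto{\multi{}}{l}:\bigsqcup_{i=1}^k \big(V_i\times \prod_{j\neq i} W_j\big)\to W .$$
The first component, $V_1\times\prod_{j\ge 2}W_j$, contains the point $0_\eta$. The plan is to describe $\Slocus{\zeta,\mzeta}{\proto{\multi{}}{l}}$ intersected with this first component as a finite union of products, and then take closures.

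\textbf{Step 1: a product formula.} Take a point $(v,w_2,\dots,w_k)$ of the first component, with image $y=(\proto{\eta_1}{l}(v),w_2,\dots,w_k)$. Its local algebra is that of $\proto{\eta_1}{l}$ at $v$, since the factor $\times\id$ does not change local algebras. The fibre over $y$ is the disjoint union of the fibres of the individual maps $\proto{\eta_i}{l}\times\id$. Hence the multisingularity at $y$ is $\multi{y_1}+\dots+\multi{y_k}$, where $y_i$ is the $i$-th coordinate of $y$ and $\multi{y_i}$ is computed for $\proto{\eta_i}{l}$. This gives
$$
\Slocus{\zeta,\mzeta}{\proto{\multi{}}{l}}\cap\Big(V_1\times\prod_{j\ge2}W_j\Big)=\bigcup_{\mzeta=\mzeta_1+\dots+\mzeta_k,\ \zeta\in\mzeta_1}\Slocus{\zeta,\mzeta_1}{\proto{\eta_1}{l}}\times\prod_{i\ge2}\Tlocus{\mzeta_i}{\proto{\eta_i}{l}} .
$$
This is the source analogue of Formula~\eqref{1}. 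The requirement $\zeta\in\mzeta_1$ appears because the distinguished point lies in the first component of the source.

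\textbf{Step 2: closures.} The union in Step~1 is finite, and the closure of a product is the product of the closures. Therefore $0_\eta$ lies in the closure of the left-hand side if and only if, for some decomposition with $\zeta\in\mzeta_1$, two things hold:
\begin{itemize}
\item $0\in V_1$ lies in $\overline{\Slocus{\zeta,\mzeta_1}{\proto{\eta_1}{l}}}$;
\item for each $i\ge 2$, $0\in W_i$ lies in $\overline{\Tlocus{\mzeta_i}{\proto{\eta_i}{l}}}$.
\end{itemize}
By criterion (3) of the source definition applied to the monosingularity $\eta$, the first condition says exactly $(\eta,\{\eta\})\le_l(\zeta,\mzeta_1)$. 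By Definition~\ref{pro:defT}(3), the second says $\eta_i\le_l\mzeta_i$.

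The closure taken in the whole source agrees with the closure taken in the first component, because that component is open and closed. Combining this with Step~2 gives both directions of the proposition.

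\textbf{Main obstacle.} The delicate point is Step~1: checking that the multisingularity at a target point of the product prototype really splits as a sum over the coordinates, and that the distinguished singularity is tracked correctly. This requires working with germs of the fibre $f^{-1}(y)$ component by component. I would use Corollary~\ref{cor:formal_local}, which identifies the multisingularity with the coordinate ring of the scheme-theoretic fibre. The fibre of a disjoint union is the disjoint union of the fibres, so its coordinate ring is the corresponding product of algebras.

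The equivalence of conditions (1)–(3) in the source definition is assumed to hold, exactly as in the target case via Proposition~\ref{pro:local}.
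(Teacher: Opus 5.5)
Your proposal is correct and matches the paper's intended argument: the paper omits this proof as analogous to that of Proposition~\ref{pro:splits}, and your source version of Formula~\eqref{1}, restricted to the component containing $0_\eta$ and followed by taking closures of finitely many products, is exactly that adaptation. The criteria you invoke apply because each sub-multiset $\underline{\zeta}_i$ of $\underline{\zeta}$ still occurs for $l$.
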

    Conditions $(1)$ and $(2)$ of the above statement depend only on combinatorics, as well as the stable hierarchy in the target. The proposition below allows us to rephrase also the third condition in terms of the stable hierarchy in the target.
    \begin{pro} 
    \label{pro:SandT}
        Let $\eta$ be a singularity and $(\zeta,\mzeta)$ an S-multisingularities that occur for $l\ge 1$. We have
        $$(\eta,\{\eta\}) \le_l (\zeta,\mzeta) \iff \eta\le_l \mzeta\,.$$
    \end{pro}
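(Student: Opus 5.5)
The plan is to work entirely with the prototype $\theta:=\proto{\eta}{l}:V\to W$ of the monosingularity $\eta$. Both sides of the equivalence then become statements about the single origin. By condition (3) in the definition of the source hierarchy, applied with $\multi{}=\{\eta\}$, the left-hand side means $0\in\overline{\Slocus{\zeta,\mzeta}{\theta}}$, where $0\in V$ is the origin of the unique source component. By Definition~\ref{pro:defT}(3), the right-hand side means $0\in\overline{\Tlocus{\mzeta}{\theta}}\subset W$. So I only need to compare these two closure conditions along $\theta$.

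\emph{($\Rightarrow$).} By Proposition~\ref{pro:loci} we have $\theta\big(\Slocus{\zeta,\mzeta}{\theta}\big)=\Tlocus{\mzeta}{\theta}$. Since $\theta$ is continuous with $\theta(0)=0$, a sequence $x_i\to 0$ in $\Slocus{\zeta,\mzeta}{\theta}$ gives a sequence $\theta(x_i)\to 0$ in $\Tlocus{\mzeta}{\theta}$. Alternatively, this follows from $\theta(\overline{X})=\overline{\theta(X)}$ in Remark~\ref{rem:closed}. Hence $0\in\overline{\Tlocus{\mzeta}{\theta}}$.

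\emph{($\Leftarrow$).} Take $y_i\to 0$ with $y_i\in\Tlocus{\mzeta}{\theta}$. Since $\zeta\in\mzeta$, each fiber $\theta^{-1}(y_i)$ contains a point $x_i$ with $\eta_{x_i}=\zeta$ and $\multi{\theta(x_i)}=\mzeta$, so $x_i\in\Slocus{\zeta,\mzeta}{\theta}$. It remains to show that $x_i\to 0$ after passing to a subsequence. This is the only point that needs care.

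For this I would use two facts. First, $\theta$ is finite, hence proper, so the preimage of a compact neighbourhood of $0\in W$ is compact, and the $x_i$ have a convergent subsequence with limit $x$. Second, by continuity $\theta(x)=0$. By Proposition~\ref{pro:proto_0}, the multisingularity over $0$ is exactly $\{\eta\}$, so $\theta^{-1}(0)$ is the single point $0$ and therefore $x=0$. Thus $0\in\overline{\Slocus{\zeta,\mzeta}{\theta}}$.

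Equivalently, one can phrase this as $\theta^{-1}\big(\overline{\Tlocus{\mzeta}{\theta}}\big)=\overline{\theta^{-1}(\Tlocus{\mzeta}{\theta})}$ from Remark~\ref{rem:closed}. However, that alone only places $0$ in the closure of the union over $\zeta'\in\mzeta$ of the loci $\Slocus{\zeta',\mzeta}{\theta}$. The sequence argument above is what lets us pick the distinguished element $\zeta$.

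The main obstacle I expect is justifying that the prototype, a priori only a germ, can be represented by a finite (proper) map whose fiber over $0$ is $\{0\}$. I would handle this with a quasi-homogeneous representative via Theorem~\ref{thm:quasihomogenous}, where the positive $\CC^*$-action makes finiteness of the polynomial map clear. Failing that, I would restrict to a small closed ball around $0$ and its preimage neighbourhood, on which the representative is proper, as in the standard local theory of finite germs.
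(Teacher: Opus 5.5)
Your proof is correct and is essentially the paper's argument. The forward direction is continuity, $f\big(\overline{S}\big)\subset\overline{f(S)}$ with $S=\Slocus{\zeta,\mzeta}{f}$. The backward direction applies closedness of a finite map to this same $S$ (which, as you rightly note, is what singles out the distinguished $\zeta$), together with uniqueness of the preimage of a point of the $\eta$-locus; your sequence/properness argument unpacks exactly this.

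The only difference is that the paper runs the argument for an arbitrary stable map $f$ via conditions (1)/(2). There finiteness, and hence $f\big(\overline{X}\big)=\overline{f(X)}$ (Remark~\ref{rem:closed}), is part of the definition, so the paper never needs a proper representative of the prototype. In your version, Theorem~\ref{thm:quasihomogenous} covers only Mather multisingularities, so it is your fallback (a local proper representative of the finite germ) that handles the general case.
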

    \begin{proof}
        Let $f$ be a stable map of relative dimension $l$ . \\
        Suppose that $(\eta,\{\eta\}) \le_l (\zeta,\mzeta)$ and $y\in\Tlocus{\eta}{f}$. The point $y$ has a unique preimage in $ \Slocus{\eta,\{\eta\}}{f}$, which is contained in $\overline{\Slocus{\zeta,\mzeta}{f}}$.
        $$y\in f\Big(\overline{\Slocus{\zeta,\mzeta}{f}}\Big) = \overline{f(\Slocus{\zeta,\mzeta}{f})}= \overline{\Tlocus{\mzeta}{f}} \,.$$
        This implies $\eta \le_l \mzeta$. \\
        For the other implication, suppose that $\eta \le_l \mzeta$ and
        $x\in\Slocus{\eta,\{\eta\}}{f}$. Let $y=f(x)$. We have
        $$y\in \Tlocus{\eta}{f} \subset \overline{\Tlocus{\mzeta}{f}}=\overline{f(\Slocus{\zeta,\mzeta}{f})}=f\Big(\overline{\Slocus{\zeta,\mzeta}{f}}\Big)\,.$$
        It follows that $y$ has preimage in $\overline{\Slocus{\zeta,\mzeta}{f}}$. The point $x$ is the unique preimage of $y$, so $x\in \overline{\Slocus{\zeta,\mzeta}{f}}$.
    \end{proof}
\begin{cor} \label{cor:SandT}
       Propositions \ref{pro:splitsS} and \ref{pro:SandT} imply that the hierarchy in the target determines the hierarchy in the source.
\end{cor}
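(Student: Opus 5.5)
The corollary follows by chaining Propositions \ref{pro:splitsS} and \ref{pro:SandT}. The plan is to show that for any S-multisingularities $(\eta,\multi{})$ and $(\zeta,\mzeta)$ occurring for $l$, the truth of $(\eta,\multi{})\le_l(\zeta,\mzeta)$ can be decided purely from the relation $\le_l$ on (target) multisingularities together with finite combinatorial data.

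First, fix an ordering $\multi{}=(\eta_1,\dots,\eta_k)$ with $\eta_1=\eta$. By Proposition \ref{pro:splitsS}, $(\eta,\multi{})\le_l(\zeta,\mzeta)$ holds if and only if some decomposition $\mzeta=\mzeta_1+\dots+\mzeta_k$ into multisingularities satisfies two conditions: $\eta_i\le_l\mzeta_i$ for all $i\ge2$, and $\zeta\in\mzeta_1$ with $(\eta,\{\eta\})\le_l(\zeta,\mzeta_1)$. Since $\mzeta$ is a finite multiset, there are only finitely many such decompositions, and enumerating them is purely combinatorial. The conditions for $i\ge 2$ are already statements in the target hierarchy.

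Next, apply Proposition \ref{pro:SandT} to the remaining condition, with $\mzeta_1$ in place of $\mzeta$. It gives $(\eta,\{\eta\})\le_l(\zeta,\mzeta_1)\iff\eta\le_l\mzeta_1$, and the right-hand side is again a relation in the target hierarchy. This application is legitimate because $(\zeta,\mzeta_1)$ is an S-multisingularity occurring for $l$: it is a sub-multiset of $\mzeta$, every element of $\mzeta$ occurs for $l$, and $\zeta\in\mzeta_1$ by assumption.

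Combining the two steps gives
\[
(\eta,\multi{})\le_l(\zeta,\mzeta)\iff \exists\,\mzeta=\mzeta_1+\dots+\mzeta_k:\ \zeta\in\mzeta_1,\ \eta_i\le_l\mzeta_i\ \text{for all } i,
\]
which involves only the target order and combinatorics. The argument has no genuine obstacle. The only points needing care are that the $\mzeta_i$ may be empty and that occurrence for $l$ is inherited by sub-multisets, and both follow directly from the definitions.
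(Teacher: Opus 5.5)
Your proposal is correct and is essentially the argument the paper intends, which it states in the text around Proposition \ref{pro:SandT} rather than in a separate proof: apply Proposition \ref{pro:SandT} to $(\zeta,\mzeta_1)$ in condition (3) of Proposition \ref{pro:splitsS}. This reduces everything to finitely many decompositions $\mzeta=\mzeta_1+\dots+\mzeta_k$ with $\zeta\in\mzeta_1$ and target relations $\eta_i\le_l\mzeta_i$ for all $i$. Your bookkeeping on empty parts and on occurrence passing to sub-multisets is exactly what is needed.
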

\begin{cor}
    Suppose that $(\eta,\multi{}) \le_l (\zeta,\mzeta)$. Then
    $$\multi{} \le_l \mzeta\,,\qquad \eta \le_l \zeta\,.$$
\end{cor}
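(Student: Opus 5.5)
We need to show that $(\eta,\multi{}) \le_l (\zeta,\mzeta)$ implies both $\multi{} \le_l \mzeta$ and $\eta \le_l \zeta$. The plan is to work directly with a single stable map $f$ of relative dimension $l$ for which $\Slocus{\eta,\multi{}}{f}\neq\varnothing$; the prototype $\proto{\multi{}}{l}$ will do, since its origin $0_\eta$ lies in $\Slocus{\eta,\multi{}}{f}$. By condition (1) of the source hierarchy we have $\Slocus{\eta,\multi{}}{f}\subset\overline{\Slocus{\zeta,\mzeta}{f}}$. Both claims will then follow by projecting this inclusion, once to the target and once to the plain source loci.

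\emph{Target claim.} I will apply $f$ and use that $f$ is closed (Remark~\ref{rem:closed}), together with Proposition~\ref{pro:loci}:
\[
\Tlocus{\multi{}}{f}=f\big(\Slocus{\eta,\multi{}}{f}\big)\subset f\Big(\overline{\Slocus{\zeta,\mzeta}{f}}\Big)=\overline{f\big(\Slocus{\zeta,\mzeta}{f}\big)}=\overline{\Tlocus{\mzeta}{f}}.
\]
Since $\Tlocus{\multi{}}{f}$ is nonempty, this gives condition (2) of Definition~\ref{pro:defT}, hence $\multi{}\le_l\mzeta$. (This works for every stable $f$, so condition (1) is obtained directly as well.)

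\emph{Source claim.} By Proposition~\ref{pro:loci} we have $\Slocus{\eta,\multi{}}{f}\subset\Qlocus{\eta}{f}$ and $\Slocus{\zeta,\mzeta}{f}\subset\Qlocus{\zeta}{f}$, so
\[
\Slocus{\eta,\multi{}}{f}\subset\overline{\Qlocus{\zeta}{f}}.
\]
This only shows that the $\multi{}$-part of $\Qlocus{\eta}{f}$ lies in $\overline{\Qlocus{\zeta}{f}}$, whereas Definition~\ref{df:hier_sing} requires all of $\Qlocus{\eta}{f}$; bridging this gap is the main obstacle. The cleanest route is to use the germ-local characterisation. Take $x\in\Slocus{\eta,\multi{}}{f}$. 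Then $x$ lies in $\Qlocus{\eta}{f}\cap\overline{\Qlocus{\zeta}{f}}$, and the source stratification by monosingularity type near $x$ depends only on the germ of $f$ at $x$, which is $\A$-equivalent to $\proto{\eta}{l}\times\id$. Hence the source-local analogue of Proposition~\ref{pro:local} yields $0\in\overline{\Qlocus{\zeta}{\proto{\eta}{l}}}$. Transporting this back to any stable map $g$ and any point of $\Qlocus{\eta}{g}$ gives $\Qlocus{\eta}{g}\subset\overline{\Qlocus{\zeta}{g}}$, that is, $\eta\le_l\zeta$.

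Alternatively, one can avoid the source-local argument via Proposition~\ref{pro:two_orders}. Choose $g=\proto{\eta}{l}$, so that $\Tlocus{\eta}{g}\neq\varnothing$. The inclusion $\Qlocus{\eta}{g}\subset\overline{\Qlocus{\zeta}{g}}$ then follows from the germ argument, and Proposition~\ref{pro:two_orders} converts it into the target statement $\Tlocus{\eta}{g}\subset\overline{\Tlocus{\zeta}{g}}$, as needed for the equivalence of the two definitions.
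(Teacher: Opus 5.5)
Your argument is correct, but it does not follow the route the paper has in mind. The paper states this corollary without proof, immediately after Propositions \ref{pro:splitsS} and \ref{pro:SandT}, so it is evidently meant to be read off from them. In that reading, $(\eta,\multi{})\le_l(\zeta,\mzeta)$ yields a decomposition $\mzeta=\mzeta_1+\dots+\mzeta_k$ with $\eta_i\le_l\mzeta_i$ for $i\ge 2$ and $\zeta\in\mzeta_1$, and Proposition \ref{pro:SandT} adds $\eta\le_l\mzeta_1$. Proposition \ref{pro:splits} then gives $\multi{}\le_l\mzeta$. Next, $\mzeta_1\le_l\zeta$ holds as in the proof of Corollary \ref{cor:dense_sing}, so transitivity gives $\eta\le_l\zeta$ in the target order. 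By Proposition \ref{pro:two_orders}, this agrees with Definition \ref{df:hier_sing}.

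You instead work directly from the definitions. Pushing condition (1) forward by the closed map $f$ gives the target claim. For the source claim, you combine the inclusion $\Slocus{\eta,\multi{}}{f}\subset\overline{\Qlocus{\zeta}{f}}$ with germ-locality of the source stratification, which is the source analogue of Proposition \ref{pro:local} and the frontier property behind the equivalence in Definition \ref{df:hier_sing}. This yields $\eta\le_l\zeta$.

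Your route is more elementary and self-contained. It does not rely on Proposition \ref{pro:splitsS}, whose proof the paper omits, nor on transitivity and the maximality of the empty multisingularity. The paper's route, in exchange, exhibits one decomposition of $\mzeta$ that witnesses both relations at once. That simultaneity is exactly what Example \ref{ex:SandT} exploits to show the converse fails.

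A small remark on your last paragraph: it is not really an alternative. It still invokes the germ argument. Converting to the target statement via Proposition \ref{pro:two_orders} is also unnecessary, since Definition \ref{df:hier_sing}(2) is already the source inclusion you proved.
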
 
It is natural to ask whether the reverse implication in the above corollary holds. The answer is negative, as shown by the example below.
\begin{ex} \label{ex:SandT}
     Consider $l=1$ and the singularities
    $$
    \multi{}=A_1I_{22} \,,\qquad
    \eta=I_{22} \,,\qquad
    \mzeta=A^2_0A_3 \,,\qquad
    \zeta=A_0 \,.
    $$
    We have $\eta\le_1\zeta$. Moreover $\multi{}\le_1\mzeta$, because $I_{22}\le_1A_3$ and $A_1\le_1A_0^2$. Yet,
    $$(I_{22},A_1I_{22}) \nleq_1 (A_0,A^2_0A_3)\,. $$
    To see this, apply Proposition \ref{pro:splitsS}. The relation $(I_{22},A_1I_{22}) \le_1 (A_0,A^2_0A_3)$ would require a decomposition $A^2_0A_3=\mzeta_1+\mzeta_2$, 
    with $A_0\in \mzeta_1$, $I_{22}\le_1 \mzeta_1$ and $A_1\le_1 \mzeta_1$. No such decomposition exists, since
    $$
    I_{22}\nleq_1 A_0A_3 \Rightarrow A_3\notin \mzeta_1\,,
    \qquad
    A_1\nleq_1 A_3 \Rightarrow A_3\notin \mzeta_2\,.
    $$
\end{ex}

\section{Hierarchy in Mather nice dimensions}
In the Mather nice dimensions we can use Thom polynomials to determine the stable hierarchy.
\begin{thm}
\label{thm:hierarch_Tp}
    	Let $\multi{}$ and $\mzeta$ be multisingularities that occur for a positive integer $l$. Suppose that $\multi{}$ is a Mather multisingularity. The following conditions are equivalent: 
	\begin{enumerate}
		\item $\multi{} \le_l \mzeta$
		\item  The singularity locus $\Tlocus{\mzeta}{\proto{\multi{}}{l}}$ is nonempty.
		\item Consider any positive $\CC^*$-action that stabilizes the prototype $\proto{\multi{}}{l}$. The evaluation of the target Thom polynomial does not vanish in $\CC^*$-equivariant cohomology:
        $$\ThT{\mzeta}(\proto{\multi{}}{l})\neq 0 \in \coh^*_{\CC^*}(pt).$$  
	\end{enumerate}
\end{thm}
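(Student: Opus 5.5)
I will prove the cycle of implications (1)$\Rightarrow$(2)$\Rightarrow$(1) and (2)$\Leftrightarrow$(3), working throughout with the prototype $\theta:=\proto{\multi{}}{l}\colon\bigsqcup V_i\to W$. By Theorem~\ref{thm:quasihomogenous}, $\theta$ is stabilized by a positive $\CC^*$-action. Positivity means that all weights on the source and on the target $W$ are strictly positive, so every $\CC^*$-orbit in $W$ has the origin in its closure.

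The equivalence (1)$\Leftrightarrow$(2) is then almost formal. By Definition~\ref{pro:defT}(3), (1) says $0\in\overline{\Tlocus{\mzeta}{\theta}}$, which certainly gives nonemptiness of $\Tlocus{\mzeta}{\theta}$. Conversely, because $\theta$ is $\CC^*$-equivariant, its multisingularity stratification of $W$ is $\CC^*$-invariant. So if $y\in\Tlocus{\mzeta}{\theta}$, the whole orbit $\CC^*\cdot y$ lies in the stratum, and since the weights are positive, $\lim_{t\to 0}t\cdot y=0$. Hence $0$ lies in the closure, which is (1).

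For (2)$\Rightarrow$(3) I would use localization and restriction to the origin. The Thom polynomial property gives
\[
\ThT{\mzeta}(\theta)=|\Aut(\mzeta)|\,[\overline{\Tlocus{\mzeta}{\theta}}]\in\coh^\bullet_{\CC^*}(W)\cong\coh^\bullet_{\CC^*}(pt).
\]
Note that the defining property of Thom polynomials is stated for stable maps between varieties; I would first justify that it holds equivariantly for the prototype, for instance via the Borel construction applied to $\theta$ as a stable map of approximation spaces, as is implicit in the interpolation method of Section~\ref{sec:interpolation}. Given this, the class of a $\CC^*$-invariant subvariety $Z\subset W$ equals the restriction $[Z]|_0$. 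For a positive action this restriction is the equivariant multiplicity of $Z$ at $0$ times the product of the weights of the normal directions. Concretely, when $Z\neq\varnothing$ one degenerates $Z$ to its tangent cone at $0$ (the cone is $\CC^*$-invariant and nonempty, since $0\in Z$ by the previous paragraph) and obtains a sum of products of positive weights with positive coefficients. Such a sum is a nonzero multiple of $u^{\codim Z}$, where $u$ is the generator of $\coh^2_{\CC^*}(pt)$. This positivity argument is the main obstacle. An alternative is to note that $[Z]$ is represented by an effective equivariant cycle and that restriction to $0$ is injective on $\coh_{\CC^*}(W)$, so one only needs that a nonempty invariant effective cycle has a nonzero class. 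That follows because pairing with the generically positive weights gives a positive degree, as in the Joseph/Rossmann multidegree positivity for positive torus actions on vector spaces.

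For (3)$\Rightarrow$(2), if $\Tlocus{\mzeta}{\theta}=\varnothing$ then its closure is empty, so the fundamental class is $0$ and the evaluation vanishes. Finally, I need that the equivariant Thom polynomial identity applies to any positive action stabilizing $\theta$, not just the one from Theorem~\ref{thm:quasihomogenous}. This works because the argument above only uses positivity, so the conclusion holds for every such action.
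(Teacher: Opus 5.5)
Your proof is correct and follows the paper's route. The equivalence (1)$\Leftrightarrow$(2) is exactly the paper's argument via the positive $\CC^*$-action of Theorem~\ref{thm:quasihomogenous}. For (2)$\Leftrightarrow$(3) the paper simply cites Feh\'er--Patakfalvi \cite[Thm.~4.3]{FeherPatakfalvi}, whose content is the positivity statement you sketch (a nonempty invariant subvariety of a representation with positive weights has nonzero equivariant class), together with the equivariant Thom polynomial identity for the prototype, which the paper also uses implicitly.
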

\begin{proof}
We have  $\multi{} \le_l \mzeta$ if and only if the origin $0$ lies in the closure of the singularity locus $\Tlocus{\mzeta}{\proto{\multi{}}{l}}$.
The prototypes of Mather multisingularities admit a positive $\CC^*$-action, cf. Theorem \ref{thm:quasihomogenous}. Every invariant nonempty closed set contains zero, thus conditions (1) and (2) are equivalent. The equivalence of (2) and (3) follows from \cite[Thm. 4.3]{FeherPatakfalvi}.
\end{proof}

The strength of Theorem~\ref{thm:hierarch_Tp}, together with the interpolation method of Section~\ref{sec:interpolation}, is that when both $\multi{}$ and $\mzeta$ are Mather multisingularities condition (3) may be checked algorithmically. That is, deciding the hierarchy of Mather multisingularities is algorithmic! 

We implemented this algorithm in a computer algebra software. That is, we calculated the Thom polynomials $\ThT{\mzeta}$ using the interpolation method, Theorem \ref{thm:interp2}. Then we computed the substitutions $\ThT{\mzeta}(\proto{\multi{}}{l})$ using the weights of positive $\CC^*$-actions collected on \cite{TPP}.

We illustrate the outcome as follows:
\begin{itemize}
    \item We present the Hasse diagram of multisingularities of dimension $\leq 5$ in Figure~\ref{fig:galaxy}. (The extension of this diagram to 6-dimensional Artin algebras would mean adding another 51 vertices, which is too many to display it here.)
    \item We present the hierarchy of (mono)singularities of dimension 6 in Figure~\ref{fig:Hierarchy_of_mono_6}.
    \item We present the elementary splittings of most (mono)singularities of dimension at most 6 in Figure~\ref{fig:splits}. Elementary splittings determine the poset structure due to Proposition \ref{pro:splits}.
\end{itemize}

\begin{rem}
    Recently, degenerations of {\em concise tensors} of dimension $5$ were established in \cite{JJtensors}. That partial order is an extension of the relation
    $\le_\infty$ for algebras of dimension~5, cf. ~Corollary~\ref{cor:Hilb0}. Hence, our 
    Figure \ref{fig:galaxy} and \cite[Fig.~4.1]{JJtensors} have a common subgraph. Order $\le_\infty$ for algebras of dimension~5 was also announced in \cite{Mazzola}.
\end{rem}
\begin{figure}[htp]
    \centering
\includegraphics[width=12cm]{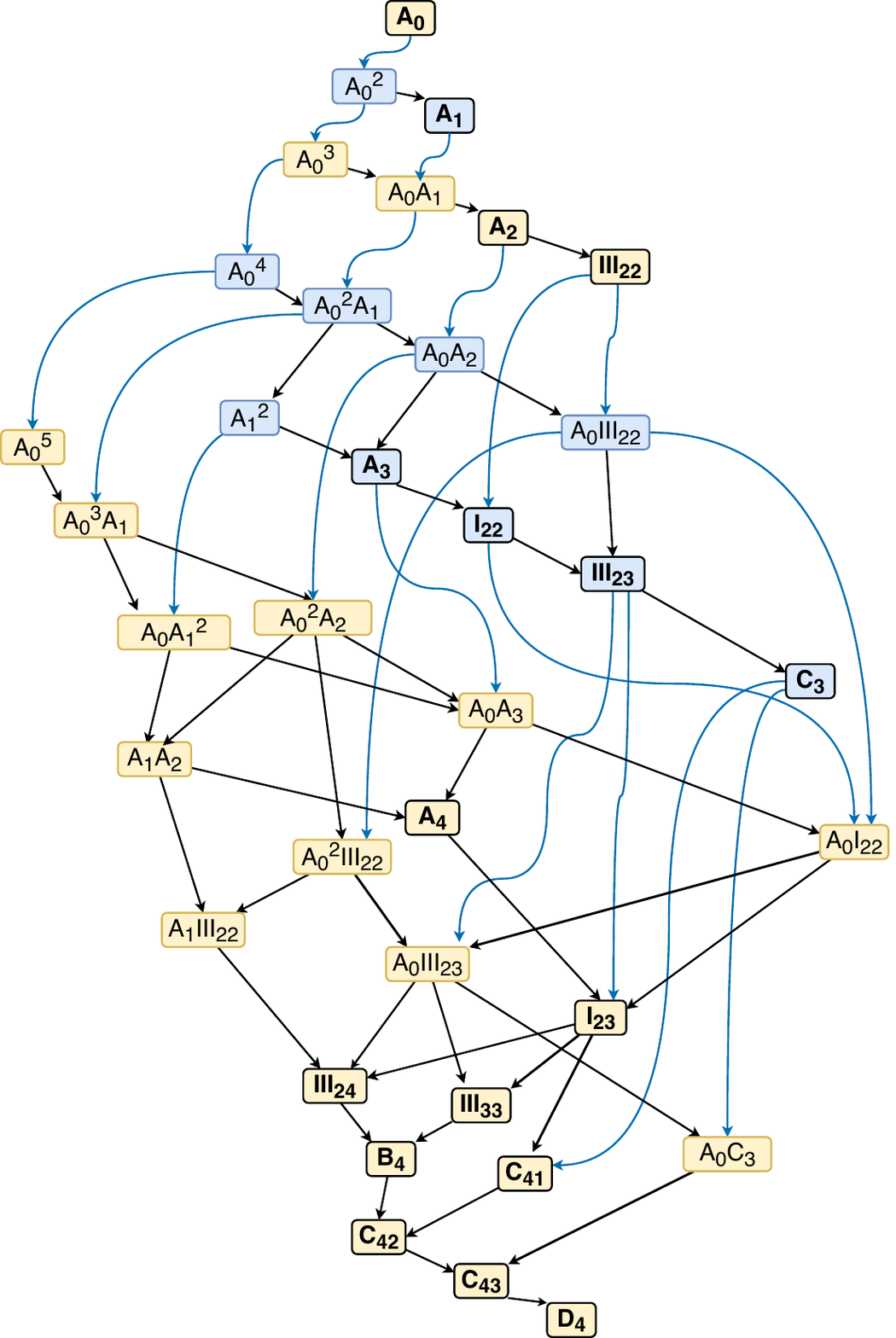}
    \caption{The Hasse diagram of the $\leq_l$-poset of multisingularities (Artin algebras) up to algebra dimension 5, for small values of $l$, and conjecturally for all $l$. Vertices in boldface are monosingularities (local algebras). The alternating yellow-blue colors indicate algebra dimensions $1,2,\ldots,5$.  Curved arrows in blue indicate adjacencies between different dimensional algebras. The extension of this diagram to dimension $6$ algebras would be too large to fit here, but it is determined by the data in Figure~\ref{fig:splits}.}
    \label{fig:galaxy}
\end{figure}

\begin{figure}[htp]
    \centering
\includegraphics[width=8cm]{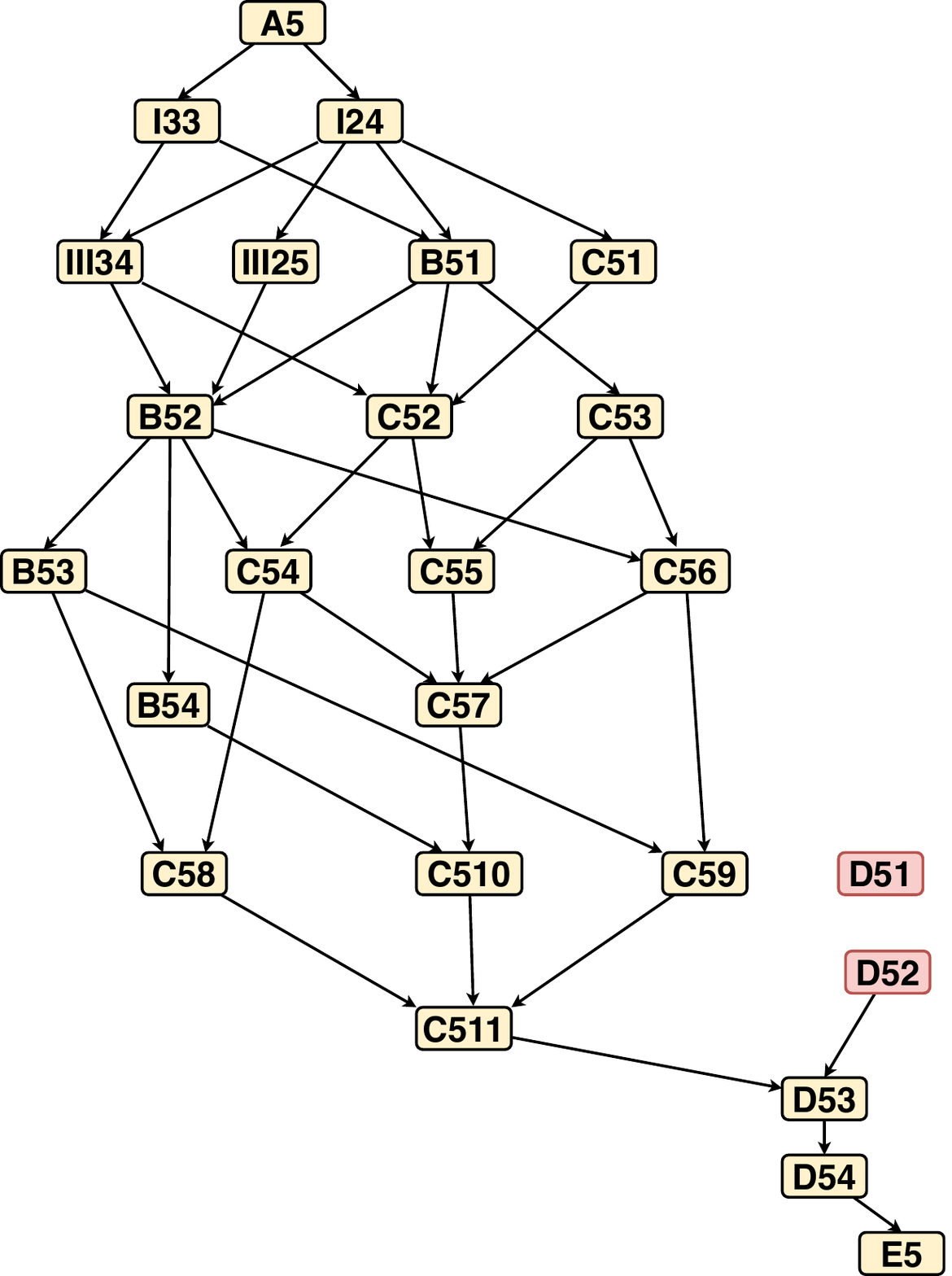}
    \caption{The Hasse diagram of the $\leq_{\infty}$-poset of monosingularities (local algebras) of dimension 6. For notation see Figure~\ref{Fig:List of 6dim algebras}. 
    To decide what arrows point into vertices D51 and D52 we would need more computing power, hence we left that part out. (C56 degenerates to  D52, but it may or may not be a covering relation.)}
\label{fig:Hierarchy_of_mono_6}
\end{figure}

\begin{rem}
An interesting phenomenon, illustrated in Figures~\ref{fig:galaxy} and \ref{fig:splits}, is that certain {\em mono}singularities cannot split directly. Instead, they must first transform into another {\em mono}singularity before decomposing into a genuine {\em multi}singularity, that is, into two or more local algebras. 
We refer to this behavior as a `dressing' phenomenon (by analogy with terminology from particle physics). For example, the singularity $C_{41}$ cannot split immediately: it must pass through chains like
\[
C_{41} \leftarrow I_{23} \leftarrow A_4 \leftarrow (A_1+A_2),
\qquad
C_{41} \leftarrow C_3 \leftarrow \III_{23} \leftarrow (A_0+\III_{22}),
\]
but never splitting immediately.
\end{rem}

\section{Hilbert schemes of points}\label{s:Hilb}
In the previous sections, we discussed a partial order on finitely dimensional algebras. In algebraic geometry, deformations of algebras of a given dimension are controlled by the Hilbert scheme of points. For a smooth variety $X$ the Hilbert scheme of $n$-points, denoted $\Hilb_n(X)$, parametrizes $0$-dimensional subschemes of $X$ of length $n$, see e.g. \cite{Nak_lectures}.
As a set:
$$
\Hilb_n(X)=\{Z\subset X| \dim Z=0\,,\, \dim_\CC(\O_Z(Z))=n\}
$$
where $\dim_\CC$ denotes the dimension of a vector space over $\CC$. From this point on, we will refer to $\dim_\CC$
 as the rank. To an algebra $A$ of rank $n$ we associate a locally closed stratum $\Strato{A}{X} \subset \Hilb_n(X)$. Intuitively, it parametrizes subschemes whose algebra is isomorphic to $A$. A closed point $p_Z\in \Hilb_n(X)$, corresponding to a subscheme $Z\subset X$, lies in $\Strato{A}{X}$ if and only if
	$$ \O_Z(Z)\simeq A\,.$$  We denote by $\Strat{A}{X}$ the closure of  the stratum $\Strato{A}{X}$. We present a formal definition of the stratum $\Strato{A}{X}$ in terms of a representing functor in Appendix \ref{s:deformation}.
\begin{rem}
    For $A=\prod_{i=1}^n\CC$ the closed stratum $\Strat{A}{X}$ is the smoothable component of the Hilbert scheme, see e.g. \cite{JJdok}, \cite[Sect. 5.3]{JB}. 
    For general $A$, these strata were considered in \cite[Sect. 4]{TOmult}. They  are examples of invariant manifolds, studied in \cite[Sect. 3]{Gaf}. 
\end{rem}

It turns out that strata $\Strato{A}{X}$ are closely related to the singularity loci of stable maps. In particular their relative position determine the stable hierarchy $\le_\infty$ for algebras of fixed rank. Their properties often boil down to classical facts from deformation theory.  Unfortunately, many such results are folklore knowledge for that community. They are well-known to experts, but the proofs, or even statements, are not written down. For the sake of completeness, we list facts we need below and present their proofs in Appendix~\ref{s:deformation}. 

\begin{pro} \label{pro:dim}
	Let $A$ be an algebra of rank $n$.
    The stratum  $\Strato{A}{X}$ is smooth of dimension
	$$
	\dim \Strato{A}{X}=n\dim X - \dim \Der(A,A)\,,
	$$
	where $\Der(A,A)$ is the vector space of derivations of $A$.
\end{pro}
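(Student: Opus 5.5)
The plan is to realize $\Strato{A}{X}$ as the image of a smooth family and compute its tangent space via deformation theory. Since everything is local on $X$ and the stratum is étale-locally a product over the points of the support, I first reduce to $X=\CC^d$ with $d=\dim X$, and work with the representable functor from Appendix~\ref{s:deformation}. A flat family of subschemes whose fibers are all isomorphic to $A$ is locally trivial as a family of algebras. Concretely, it is given by a section of the $\Aut(A)$-torsor $\mathrm{Isom}(A\otimes\O_S,\O_{\mathcal Z})$ together with an $S$-morphism $\Spec A\times S\to X$ that is a closed embedding on fibers.

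This gives a presentation $\Strato{A}{X}\simeq \mathcal{E}/\Aut(A)$. Here $\mathcal{E}\subset \Hom(\Spec A,X)$ is the open subscheme of closed embeddings, and $\Aut(A)$ acts freely by precomposition. The steps are as follows.

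First, I show that $\Hom(\Spec A,X)$ is smooth of dimension $n\dim X$. For $X=\CC^d$ this space is just $A^{d}$, i.e.\ $d$-tuples of elements of $A$. In general it is the Weil restriction of $X$ along $\Spec A\to \Spec\CC$, which is smooth of dimension $n\dim X$ because $X$ is smooth. Second, embeddings form an open subset $\mathcal{E}$, since surjectivity of $\O_X\to A$ is an open condition. Third, $\Aut(A)$ is a linear algebraic group whose Lie algebra is $\Der(A,A)$. Fourth, the action on $\mathcal{E}$ is free with closed orbits. Freeness holds because an embedding $\iota$ with $\iota\circ\phi=\iota$ forces $\phi=\id$.

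The main step is then to prove that the map $\mathcal{E}\to\Strato{A}{X}$ is an $\Aut(A)$-torsor, which means checking that it is smooth and surjective with fibers equal to orbits. Given that, $\Strato{A}{X}$ is smooth of dimension $n\dim X-\dim\Aut(A)=n\dim X-\dim\Der(A,A)$.

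The obstacle is making this quotient statement scheme-theoretically correct, rather than merely correct on closed points. I would attack it through tangent–obstruction spaces. At a point $Z$, the tangent space to the stratum is the subspace of $\Hom_{\O_X}(I_Z,A)$ consisting of first-order deformations that are trivial as algebra deformations. This subspace is the image of $T_\iota\mathcal{E}=\Der(\O_X,A)\simeq A^{d}$, and the kernel of the map is the image of $\Der(A,A)$. Thus there is an exact sequence
\[
0\to \Der(A,A)\to \Der(\O_X,A)\to T_Z\Strato{A}{X}\to 0 .
\]
For unobstructedness, a trivial algebra deformation over $S'\supset S$ lifts, and the embedding lifts because $X$ is smooth. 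This gives formal smoothness of the stratum, and hence smoothness with the stated dimension.
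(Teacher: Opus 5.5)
Your argument is correct and, once you fall back on tangent--obstruction theory, it is essentially the paper's proof. Smoothness comes from the infinitesimal lifting criterion: the $\Spec A$-bundle over an Artinian base is trivial, so you lift the images of the coordinates, and surjectivity follows by Nakayama or by your openness of embeddings. The dimension comes from the sequence $0\to\Der(A,A)\to\Der(\mathcal{O}_X,A)\to T_{p_Z}\Strato{A}{X}\to 0$, which identifies the tangent space with trivial embedded deformations; your $\Der(\mathcal{O}_X,A)$ is the paper's $\Hom(\Omega_{\CC[x_1,\dots,x_m]},A)$. The torsor picture $\mathcal{E}/\Aut(A)$ explains well why the answer is $n\dim X-\dim\Aut(A)$, but you never prove $\mathcal{E}\to\Strato{A}{X}$ is a torsor and your final argument does not need it, so present it as motivation rather than as a step.
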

\begin{pro} \label{pro:strat1}
	The stratification of $\Hilb_n(X)$ satisfies the frontier condition. For any algebras $A$ and $B$ of rank $n$ we have
    $$
    \Strato{A}{X}\cap \Strat{B}{X}\neq \varnothing \Rightarrow \Strato{A}{X}\subseteq \Strato{B}{X}\,.
    $$
\end{pro}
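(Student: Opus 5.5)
The plan is to prove the frontier condition by \emph{local homogeneity}: the germ of the pair $(\Hilb_n(X),\text{stratification})$ at a point $p_Z\in\Strato{A}{X}$ depends only on the algebra $A$ and on $\dim X$, not on the chosen point $p_Z$. (The conclusion in the statement should of course read $\Strato{A}{X}\subseteq\Strat{B}{X}$.) Granting homogeneity, the argument is short. Suppose $\Strato{A}{X}\cap\Strat{B}{X}$ contains a point $p_Z$, and let $p_{Z'}\in\Strato{A}{X}$ be arbitrary. A stratification-preserving local isomorphism $(\Hilb_n(X),p_Z)\simeq(\Hilb_n(X),p_{Z'})$ carries germs of $\Strato{B}{X}$ to germs of $\Strato{B}{X}$. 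Since $p_Z$ lies in the closure of $\Strato{B}{X}$, so does $p_{Z'}$. This reproduces the pattern of Proposition~\ref{pro:local} for stable maps.

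To establish homogeneity I will proceed in three steps.

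\emph{Reduction to local pieces.} Write $Z=\bigsqcup Z_i$ with $Z_i$ supported at distinct points $x_i$. In a small analytic (or étale) neighbourhood of $p_Z$, a subscheme of length $n$ near $Z$ splits uniquely into pieces of lengths $\operatorname{rank}\O_{Z_i}$ lying in disjoint neighbourhoods $U_i\ni x_i$. Hence the germ of $\Hilb_n(X)$ at $p_Z$ is the product of the germs of $\Hilb_{n_i}(U_i)$ at $p_{Z_i}$. The algebra of a union is the product of the algebras, so the stratification corresponds to the stratification of the product by isomorphism type of product algebras. Choosing coordinates, each $U_i$ is replaced by a neighbourhood of $0\in\CC^d$, $d=\dim X$.

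\emph{Matching two embeddings of a local algebra.} For $A=\prod A_i$ with $A_i$ local, any two points of $\Strato{A}{X}$ have the same decomposition up to reordering, so it suffices to compare two ideals $I,I'\subset\CC[[x_1,\dots,x_d]]$ with $\CC[[\xvar]]/I\simeq\CC[[\xvar]]/I'\simeq A_i$. I lift the isomorphism of quotients to a ring map $\phi$ of power series rings sending $I'$ onto $I$. Both surjections have the same embedding dimension $e\le d$, so I can adjust $\phi$ on the remaining $d-e$ variables to make its linear part invertible; then $\phi$ is an automorphism. Both ideals contain $\m^{n}$, so I replace $\phi$ by a polynomial map agreeing with it modulo $\m^{n+1}$. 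This polynomial map is a local biholomorphism of $(\CC^d,0)$ that maps $Z'_i$ to $Z_i$.

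\emph{Transport of strata.} A local biholomorphism $\psi$ of neighbourhoods in $X$ induces an isomorphism of the corresponding open subsets of Hilbert schemes, $W\mapsto\psi(W)$. This isomorphism preserves the isomorphism type of $\O_W(W)$, hence preserves every stratum. Taking products over the $i$ (and a permutation of the factors where needed) gives the required stratification-preserving germ isomorphism between $p_{Z'}$ and $p_Z$.

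I expect the main obstacle to be the first step: making rigorous that the germ of $\Hilb_n(X)$ (as a scheme or analytic space, not merely as a set) at $p_Z$ is the product of local Hilbert scheme germs, and that a biholomorphism defined only near the support acts on a neighbourhood of $p_Z$. I would first try to use the étale-local description of the Hilbert--Chow morphism near a point of $\operatorname{Sym}^n X$ with distinct support points. Only the underlying topological statement (closures of strata) is needed, so an analytic-neighbourhood version should suffice. If that becomes delicate, a fallback is to use the representing functor of Appendix~\ref{s:deformation} to show that $\Strato{A}{X}$ is an orbit-like family, namely the image of a connected space of embeddings $\Spec A\hookrightarrow X$. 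One would then run an open-closed argument on $\Strato{A}{X}\cap\Strat{B}{X}$, with openness coming from the same local transport argument.
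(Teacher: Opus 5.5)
Your argument is correct, and you rightly read the conclusion as $\Strato{A}{X}\subseteq\Strat{B}{X}$. However, it follows a genuinely different route from the paper.

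\textbf{The paper's route.} The paper never compares neighbourhoods of two points of $\Strato{A}{X}$. Instead it characterizes membership in $\Strat{B}{X}$ intrinsically. For $X=\Aa^m$, Proposition~\ref{lem:strat} shows that $\Strato{A}{X}\cap\Strat{B}{X}\neq\varnothing$ iff $A$ is generated by $m$ elements and $B$ deforms abstractly to $A$ over $\CC[[t]]$.
\begin{enumerate}
\item One direction uses the curve-selection lemma \cite[Lem.~3.10]{JB} together with triviality of $\Spec A$-bundles over Artinian bases.
\item The other direction takes an \emph{arbitrary} $p\in\Strato{A}{X}$, i.e.\ a surjection $\CC[x_1,\dots,x_m]\to A$. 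It lifts this surjection by Nakayama onto the $\CC[[t]]$-deformation. This gives an arc through $p$ whose generic point lies in $\Strato{B}{X}$.
\end{enumerate}
General smooth $X$ is then reduced to $\Aa^m$ via completions (Proposition~\ref{lem:complete}).

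\textbf{Your route.} You replace the abstract deformation by ambient symmetry. Any two embeddings of a local algebra differ by a local biholomorphism; this is the contact-equivalence fact you reprove in step 2. The induced map on Hilbert schemes then preserves all strata.

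\textbf{What your approach buys.} It gives a stronger structural statement: the stratified germ of $\Hilb_n(X)$ is the same at all points of a stratum. It also mirrors Proposition~\ref{pro:local}.

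\textbf{What it costs.} You must supply three facts.
\begin{enumerate}
\item The set of subschemes supported in an open $U\subset X$ is an analytic-open subset of $\Hilb_n(X)$, identified with the Douady space of $U$. Hence a biholomorphism $U\to U'$ acts on it. With this fact, your product decomposition in step~1 becomes unnecessary: just take $U=\bigsqcup U_i$.
\item Zariski and classical closures of the constructible set $\Strato{B}{X}$ agree.
\item In step 2, the modification on the remaining $d-e$ variables must be by elements of $I$, so that the induced map on quotients is unchanged.
\end{enumerate}

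\textbf{A limitation.} Your homogeneity only compares ambient spaces of the same dimension. The paper's intrinsic criterion (``$B$ deforms to $A$'') is what yields Proposition~\ref{pro:deform_v2} for $Y$ of arbitrary dimension, and hence the well-definedness of $\le_{\Hilb}$.
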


\begin{pro} \label{pro:deform_v2}
Let $A$ and $B$ be algebras of rank $n$. Suppose that for a smooth variety $X$ we have  $\Strato{A}{X}\subset \Strat{B}{X}$ and $\Strato{A}{X}\neq \varnothing$. Then for every smooth variety $Y$ we have
$\Strato{A}{Y}\subset \Strat{B}{Y}\,.$
\end{pro}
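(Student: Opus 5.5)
The plan is to reduce the statement to a local question at a single point, show that this local question depends only on the algebra $A$ (together with the ambient dimension), and then compare ambient dimensions by taking products with affine spaces.

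\emph{Step 1: locality.} Suppose $\Strato{A}{X}\subset\Strat{B}{X}$ and $\Strato{A}{X}\neq\varnothing$. By Proposition \ref{pro:strat1}, to prove $\Strato{A}{Y}\subset\Strat{B}{Y}$ it suffices to find one point of $\Strato{A}{Y}$ lying in $\Strat{B}{Y}$.

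\emph{Step 2: reduction to punctual subschemes.} Write $A=\prod A_i$ with each $A_i$ local. A subscheme with algebra $A$ is a disjoint union of points $p_i$ carrying local algebras $A_i$. Near such a point, $\Hilb_n$ is étale-locally the product $\prod\Hilb_{n_i}$, and the $B$-strata split according to decompositions $B=\prod B_i$ with $B_i$ of rank $n_i$. Moreover, by the same argument as in Proposition \ref{pro:splits}, the closure of a finite union is the union of the closures. This reduces the problem to local $A$ and to the local Hilbert scheme at a point, i.e.\ to subschemes supported near one point. The Hilbert scheme of points supported near $p\in X$ depends only on the formal (or étale) neighborhood of $p$, so we may replace $X$ by $\CC^d$ with $d=\dim X$.

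\emph{Step 3: changing the dimension.} Since $A$ is realized in $X$, its embedding dimension $e$ satisfies $e\le d$. If $\dim Y\ge e$, then $A$ embeds into $\CC^e\subset\CC^{\dim Y}$; otherwise $\Strato{A}{Y}=\varnothing$ and there is nothing to prove. It therefore suffices to prove that, for $d,d'\ge e$, the statement holds in $\CC^d$ if and only if it holds in $\CC^{d'}$. Going from $\CC^d$ up to $\CC^{d'}$ is easy: the inclusion $\CC^d\subset\CC^{d'}$ induces a closed embedding $\Hilb_n(\CC^d)\hookrightarrow\Hilb_n(\CC^{d'})$ that preserves the algebra, hence maps strata into strata and closures into closures.

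The main obstacle is going down, from $\CC^{d'}$ to $\CC^d$. My plan here is the following. Take a family $Z_t$ with algebra $B$ degenerating to $Z_0$ with algebra $A$ and $Z_0\subset\CC^e\times 0$. Pick a linear projection $\pi:\CC^{d'}\to\CC^e$ that restricts to the identity on $\CC^e\times 0$. The closed embedding $Z_0\to\CC^e$ then induces a surjection $\CC[\CC^e]\to\O_{Z_0}$. Surjectivity of a map onto a finite flat family is an open condition, so after shrinking the base, $\pi$ restricts to a closed embedding on $Z_t$ for $t$ near $0$. This yields a family in $\Hilb_n(\CC^e)$ with the same algebras, which is exactly what is needed.

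An alternative approach is to use the abstract formulation in Appendix \ref{s:deformation}: $\Strato{A}{X}\subset\Strat{B}{X}$ if and only if $A$ admits a flat deformation over a curve germ with general fiber isomorphic to $B$. Any such abstract finite flat family embeds into affine space relative to the base. With this formulation, Steps 2 and 3 become the statement that embedding an abstract finite flat family is possible whenever the special fiber embeds.
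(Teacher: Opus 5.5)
Your argument is correct, but it takes a different route from the paper. The paper never compares ambient dimensions directly. For $X=\Aa^m$, Proposition~\ref{lem:strat} shows that three conditions are equivalent: $\Strato{A}{X}\cap\Strat{B}{X}\neq\varnothing$; \emph{$A$ is generated by $m$ elements and $B$ deforms to $A$ in the abstract sense of Definition~\ref{df:deform}}; and $\varnothing\neq\Strato{A}{X}\subseteq\Strat{B}{X}$. Proposition~\ref{lem:complete} then passes from an arbitrary smooth $X$ to $\Aa^{\dim X}$ via completions. The middle condition involves only $A$ and $B$, apart from the generation bound, so the statement follows at once, and the same lemma also gives the frontier condition, Proposition~\ref{pro:strat1}.

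The key step there, (2)$\Rightarrow$(3), lifts an arbitrary presentation $\CC[x_1,\dots,x_m]\onto A$ to the abstract $\CC[[t]]$-deformation and applies Nakayama. That is your ``alternative approach,'' and it is also the mechanism of your going-down step, where you use the specific generators given by a coordinate projection.

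Your main route stays inside Hilbert schemes of affine spaces. Going up is formal, via the closed embedding $\Hilb_n(\CC^d)\hookrightarrow\Hilb_n(\CC^{d'})$, and going down is projection plus Nakayama. This lets you avoid the moduli space $\mathcal{M}_n$, fppf bundles and abstract deformations entirely. The costs are the following.
\begin{itemize}
\item You rely on Proposition~\ref{pro:strat1}, in Step~1 and to place $Z_0$ inside $\CC^e\times 0$. The paper proves that proposition by the very lemma you are bypassing, so your argument is not independent of it.
\item You need the \'etale product decomposition of $\Hilb_n$ near a non-local point and an \'etale chart $U\to\CC^{\dim X}$, together with openness of \'etale maps to transport closures. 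The paper dismisses the non-local case in one sentence and handles localization through $\CC[[t]]$-families supported at a single point. This is also why your ``formal neighbourhood'' phrasing only works when closures are tested by such families, not for the open stratum itself.
\end{itemize}
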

\begin{df}
    Let $A$ and $B$ be algebras of rank $n$. We write
    $$
    A\le_{\Hilb}B
    $$
    if there exists a smooth variety $X$, such that $\Strato{A}{X}\subset \Strat{B}{X}$ and $\Strato{A}{X}\neq \varnothing$.
\end{df}
\noindent
Proposition \ref{pro:deform_v2} implies that the relation $\le_{\Hilb}$ is a partial order on the set of algebras of rank $n$.
\begin{figure}
 \begin{center}
    \begin{tabular}{|l|lllllllllllllll} 
    \hline
$A_1$ & $A_0^2$ \\
 \hline 
$A_2$ & $A_0A_1$ \\
$\III_{22}$ & $A_2$ \\
 \hline 
$A_3$ & $A_1^2$ & $A_0A_2$ \\
$I_{22}$ & $\III_{22}$ & $A_3$ \\
$\III_{23}$ & $I_{22}$ & $A_0\III_{22}$ \\
$C_3$ & $\III_{23}$ \\
    \hline
   $A_4$ & $A_1A_2$ & $A_0A_3$\\
    $I_{23}$ & $\III_{23}$ &  $A_4$ & $A_0I_{22}$ \\
    $\III_{24}$ & $I_{23}$ & $A_1\III_{22}$ & $A_0\III_{23}$  \\
    $\III_{33}$ & $I_{23}$ & $A_0\III_{23}$ \\
    $B_4$ & $\III_{33}$ & $\III_{24}$ \\
    $C_{41}$ & $I_{23}$ & $C_3$ & & &\multicolumn{3}{l}{(only $I_{23}$ for $l=2$)} \\
    $C_{42}$ & $C_{41}$ & $B_4$ \\
    $C_{43}$ & $C_{42}$ & $A_0C_3$ \\
    $D_4$ & $C_{43}$ \\
    \hline
$A_5$ & $A_2^2$ & $A_1A_3$ & $A_0A_4$ \\
$I_{24}$ & $\III_{24}$ & $A_5$ & $A_1I_{22}$ & $A_0I_{23}$ \\
$I_{33}$ & $\III_{33}$ & $A_5$ & $A_0I_{23}$ \\
$\III_{25}$ & $I_{24}$ & $A_2\III_{22}$ & $A_1\III_{23}$ & $A_0\III_{24}$ \\
$\III_{34}$ & $I_{33}$ & $I_{24}$ & $A_1\III_{23}$ & $A_0\III_{33}$ & $A_0\III_{24}$ \\
$B_{51}$ & $B_4$ & $I_{33}$ & $I_{24}$ \\
$B_{52}$ & $B_{51}$ & $\III_{34}$ & $\III_{25}$ & $A_0B_4$ \\
$B_{53}$ & $B_{52}$ & $\III_{22}^2$ \\
$B_{54}$ & $B_{52}$ \\
$C_{51}$ & $C_{42}$ & $I_{24}$ & $A_0\III_{33}$ & & 
\multicolumn{3}{l}{($B_4$ instead $C_{42}$ for $l=1$)}\\
$C_{52}$ & $C_{51}$ & $B_{51}$ & $\III_{34}$ & $A_0B_4$ \\
$C_{53}$ & $C_{42}$ & $B_{51}$ & $A_0C_{41}$ \\
$C_{54}$ & $C_{52}$ & $B_{52}$ \\
$C_{55}$ & $C_{53}$ & $C_{52}$ & $A_0C_{42}$ \\
$C_{56}$ & $C_{53}$ & $B_{52}$ & $A_0C_{42}$ \\
$C_{57}$ & $C_{56}$ & $C_{55}$ & $C_{54}$\\
\hline
$A_6$ & $A_2A_3$ & $A_1A_4$ & $A_0A_5$ \\
$I_{25}$ & $\III_{25}$ & $A_6$ & $A_2I_{22}$ & $A_1I_{23}$ & $A_0I_{24}$ \\
$I_{34}$ & $\III_{34}$ & $A_6$ & $A_1I_{23}$ & $A_0I_{33}$ & $A_0I_{24}$ \\
$\III_{26}$ & $I_{25}$ & $A_3\III_{22}$ & $A_2\III_{23}$ & $A_1\III_{24}$ & $A_0\III_{25}$ \\
$\III_{35}$ & $I_{34}$ & $I_{25}$ & $A_2\III_{23}$ & $A_1\III_{33}$ & $A_0\III_{34}$ & $A_0\III_{25}$ \\
$\III_{44}$ & $I_{34}$ & $A_1\III_{24}$ & $A_0\III_{34}$ \\
$B_6 (l=2)$ & $B_{52}$ & $I_{34}$ & $I_{25}$ & $A_0B_{51}$ \\
$C_6 (l=1)$ & $C_{52}$ & $B_{52}$ & $I_{34}$ & $I_{25}$ & $A_1\III_{33}$ & $A_0C_{51}$ & $A_0\III_{34}$  \\
$C_6 (l= 2)$ & $C_{56}$ & $C_{55}$ & $I_{34}$ & $I_{25}$ & $A_1\III_{33}$ & $A_0C_{51}$ & $A_0\III_{34}$ \\
\hline
    \end{tabular}
\end{center}
    \caption{Elementary splittings of some local algebras for small values of $l$, and conjecturally for all $l$. All local algebras are included with corresponding singularity having codimension $\leq 5l+11$. Singularities $C_{58},\ldots, C_{5,11}$ are presently outside of the reach of our computing power. The cases for $C_{41}$, $C_{51}$, $C_6$ are for innocent reasons: for small $l$ some algebras in the `natural' elementary splitting do not occur yet.}
    \label{fig:splits}
\end{figure}

\section{Comparison of partial orders}
\label{sec:comparison}
\subsection{Comparison}
We prove that for algebras of fixed rank the partial orders $\le_{\Hilb}$ and the stable hierarchy $\le_\infty$ coincide. This result follows from the theorem presented below. The present subsection is devoted to the proof of the theorem, while the next one discusses its consequences.
\begin{thm} \label{thm:Hilb}
    Let $A$ and $B$ be algebras of rank $n$. Let $\multi{}$ and $\mzeta$ be the corresponding multisingularities. For any stable map $f:X\to Y$ such that $\Tlocus{\multi{}}{f}\neq \varnothing$ we have
    $$\Tlocus{\multi{}}{f}\subset \overline{\Tlocus{\mzeta}{f}}\iff \Strato{A}{X}\subset \Strat{B}{X}\,.$$
\end{thm}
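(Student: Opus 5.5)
Given the stable map $f:X\to Y$, I will study the \emph{fiber map}
$$\phi:\ Y^\circ\to \Hilb_n(X),\qquad y\mapsto F_y=f^{-1}(y),$$
where $Y^\circ=\{y\in Y \mid \dim_\CC(f_*\O_X)(y)=n\}$. Because $f$ is finite, $f_*\O_X$ is coherent, so upper semicontinuity of rank makes $Y^\circ$ locally closed. On the reduced structure of $Y^\circ$ the sheaf $f_*\O_X$ is locally free of rank $n$, and the scheme-theoretic fibers form a flat family of length-$n$ subschemes of $X$, giving a morphism $\phi$.

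By Corollary~\ref{cor:formal_local}, for every $y\in Y^\circ$ and every algebra $C$ of rank $n$ we have $\phi(y)\in\Strato{C}{X}$ if and only if $y\in\Tlocus{\mathrm{mult}(C)}{f}$. Hence $\phi^{-1}(\Strato{C}{X})=\Tlocus{C}{f}$. The direction ($\Leftarrow$) then follows from continuity of $\phi$:
$$\phi(\Tlocus{\multi{}}{f})\subset \Strato{A}{X}\subset\Strat{B}{X},$$
so $\Tlocus{\multi{}}{f}\subset\phi^{-1}(\Strat{B}{X})$. The latter set is closed in $Y^\circ$. It remains to see that it lies in the closure of $\phi^{-1}(\Strato{B}{X})=\Tlocus{\mzeta}{f}$. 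For that I need $\phi$ to be \emph{open}, or at least submersive, onto its image near points of $\Tlocus{\multi{}}{f}$.

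The key step, and the main obstacle, is a local transversality statement. Near $y\in\Tlocus{\multi{}}{f}$, the map $\phi$ should be a smooth morphism onto an open subset of the union of rank-$n$ strata near $[F_y]$. I would try to derive this from stability, i.e.\ infinitesimal stability of the multigerm $f:(X,F_y)\to(Y,y)$. A deformation of the subscheme $F_y\subset X$ is an element of $\Hom(I/I^2,\O_{F_y})$. I would show that every such deformation which keeps the length $n$ is realized by moving $y$ and applying a source diffeomorphism, via the infinitesimal stability equation
$$tf(\theta_X)+\omega f(\theta_Y)\supseteq \text{all of }\theta(f).$$
Alternatively, I would use Proposition~\ref{pro:local} and the prototype $\proto{\multi{}}{l}$, which is a miniversal unfolding of the genotype. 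Miniversality means that the induced map from the target germ to the deformation space of the algebra (the Hilbert-scheme germ, modulo reparametrization of $X$) is smooth (versal). Concretely, I would compare with the embedded-algebra description in Appendix~\ref{s:deformation}. I would check that $\phi$ composed with the local action of $\mathrm{Diff}(X)$ is smooth at $y$ onto a neighbourhood of $[F_y]$ in $\Hilb_n(X)$. Smooth morphisms are open, so preimages of closures are closures of preimages, and this gives ($\Leftarrow$).

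For ($\Rightarrow$), suppose $\Tlocus{\multi{}}{f}\subset\overline{\Tlocus{\mzeta}{f}}$ and pick $y$ in $\Tlocus{\multi{}}{f}$. I must rule out that $y$ is approached only through points of $\Tlocus{\mzeta}{f}$ lying outside $Y^\circ$. This cannot happen: $\dim\mzeta=\dim\multi{}=n$, so $\Tlocus{\mzeta}{f}\subset Y^\circ$. Continuity of $\phi$ then gives $\phi(y)\in\overline{\Strato{B}{X}}$, so $\Strato{A}{X}\cap\Strat{B}{X}\ne\varnothing$. Proposition~\ref{pro:strat1} (the frontier condition) then yields $\Strato{A}{X}\subset\Strat{B}{X}$, using that the orbit of $\phi(y)$ under local automorphisms of $X$ together with transitivity within a stratum covers $\Strato{A}{X}$ locally.

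Thus the only nontrivial input is the versality claim, and it is needed only for ($\Leftarrow$). I expect to spend most effort there: making precise that the stable multigerm's target is a versal base for deformations of the fibre $F_y$ inside $(X,F_y)$, up to the action of source diffeomorphisms.
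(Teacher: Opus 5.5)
\paragraph{Comparison with the paper.} Your ($\Rightarrow$) direction is exactly the paper's argument. It uses the fibre map on the locally closed set of points with length-$n$ fibre, continuity, and then the frontier condition of Proposition~\ref{pro:strat1}; together these are Proposition~\ref{pro:Hilb2}. The remark about orbits of local automorphisms is unnecessary, since Proposition~\ref{pro:strat1} already upgrades $\Strato{A}{X}\cap\Strat{B}{X}\neq\varnothing$ to containment.

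For ($\Leftarrow$) you take a genuinely different, purely local route. The paper argues globally (Propositions~\ref{pro:dense}, \ref{pro:image}, \ref{pro:Hilb1}):
\begin{itemize}
\item it forms the relative Hilbert scheme $\Hilb_n(f)$ with $C^o_\mzeta(f)\subset C_\mzeta(f)$;
\item it proves $C^o_\mzeta(f)$ is dense, by applying Gaffney's transversality characterization of stable maps to an invariant smooth stratification of $\Strat{B}{X}\setminus\Strato{B}{X}$, together with a lower bound on the dimension of components of $C_\mzeta(f)$;
\item it then projects to $Y$ and intersects with the open set $Y_{\le n}$.
\end{itemize}
Your key claim is that every length-$n$ subscheme near $F_y$ is carried onto a nearby fibre $f^{-1}(y')$ by a local biholomorphism of $X$ close to the identity. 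This claim is true and suffices. It needs only the pointwise condition $\phi(y)\in\Strat{B}{X}$ and no dimension theory. The paper's detour buys more: density, the identification $\overline{\Tlocus{\mzeta}{f}}\cap Y_{\le n}=\pi(C_\mzeta(f))\cap Y_{\le n}$ for projective $X$, and the dimension formula of Corollary~\ref{cor:Der}. Both routes rest on the same input, since Gaffney's theorem is itself a characterization of stability.

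\paragraph{Repairs to your crux.} First, your intermediate formulations are false and would not suffice anyway: ``$\phi$ open onto its image'' and ``$\phi$ smooth onto an open set of strata.'' The map $\phi$ sends $\Tlocus{\multi{}}{f}$ injectively into $\Strato{A}{X}$. By Proposition~\ref{pro:dim} and Corollary~\ref{cor:Der} their dimensions differ by $(n-1)\dim Y$. Only the version with source reparametrizations is correct. Because these form an infinite-dimensional group, state it as the concrete surjectivity claim above rather than as smoothness.

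Second, the claim must come from Mather's versality theorem, not from a first-order computation alone. Take $S=f^{-1}(y)$ and coordinates $y_1,\dots,y_p$ at $y$. Stability of the multigerm is the condition $\theta(f)=tf(\theta_{X,S})+f^*\m_y\,\theta(f)+\CC\langle\partial_{y_1},\dots,\partial_{y_p}\rangle$. Equivalently, $(x,y')\mapsto f(x)-y'$ is an infinitesimally, hence genuinely, $\mathcal{K}$-versal unfolding of the fibre equations $f-y$. Then:
\begin{enumerate}
\item By flatness and Nakayama, $f-y$ lifts to generators $g_t$ of the universal ideal over the germ $(\Hilb_n(X),[F_y])$. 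Extend them to a smooth ambient germ $(\CC^N,0)$.
\item Versality gives $h\colon(\CC^N,0)\to(Y,y)$, diffeomorphisms $\varphi_t$ with $\varphi_0=\id$, and invertible matrices $M_t$ with $g_t=M_t\cdot(f\circ\varphi_t-h(t))$.
\item For $t$ in the Hilbert germ, finiteness of $f$ gives $f^{-1}(h(t))=\varphi_t(F_t)$. Hence $h(t)\in Y^\circ$ and $\O(f^{-1}(h(t)))\simeq\O(F_t)$.
\item Choosing $F_t\in\Strato{B}{X}$ with $t\to0$ yields $y\in\overline{\Tlocus{\mzeta}{f}}$.
\end{enumerate}
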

\begin{cor} \label{cor:Hilb0} 
Let $A$ and $B$ be algebras of rank $n$. Let $\multi{}$ and $\mzeta$ be the corresponding multisingularities. We have
        $$\multi{} \le_\infty \mzeta \iff A\le_{\Hilb}B\,.$$   
\end{cor}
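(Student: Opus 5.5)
The corollary follows from Theorem~\ref{thm:Hilb} by choosing a suitable stable map and then moving between varieties with Proposition~\ref{pro:deform_v2}. One subtlety: the theorem as stated puts the Hilbert strata in the \emph{source} $X$ of $f$. I read $\Strato{A}{X}$ as nonempty and of the expected kind whenever $\Tlocus{\multi{}}{f}\neq\varnothing$, since a fibre $F_y$ with $y\in\Tlocus{\multi{}}{f}$ is a length-$n$ subscheme of $X$ with coordinate ring $A$ (Corollary~\ref{cor:formal_local}). So the same point certifies $\Strato{A}{X}\neq\varnothing$.

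For ($\Rightarrow$), assume $\multi{}\le_\infty\mzeta$, so $\multi{}\le_l\mzeta$ for some $l\ge1$, and both occur for $l$ by the definition of $\le_l$.
\begin{enumerate}
\item Take the prototype $f=\proto{\multi{}}{l}$, or any stable map of relative dimension $l$ with $\Tlocus{\multi{}}{f}\neq\varnothing$; by Proposition~\ref{pro:proto_0} the origin lies in this locus.
\item By Definition~\ref{pro:defT}(1), $\Tlocus{\multi{}}{f}\subset\overline{\Tlocus{\mzeta}{f}}$.
\item Theorem~\ref{thm:Hilb} then gives $\Strato{A}{X}\subset\Strat{B}{X}$ for the source $X$.
\item $\Strato{A}{X}\neq\varnothing$ because the fibre over $0$ is a point of it.
\end{enumerate}
Hence $A\le_{\Hilb}B$ by definition.

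For ($\Leftarrow$), assume $A\le_{\Hilb}B$. By Proposition~\ref{pro:deform_v2}, $\Strato{A}{Y}\subset\Strat{B}{Y}$ holds for every smooth $Y$.
\begin{enumerate}
\item Choose $l$ large enough that both $\multi{}$ and $\mzeta$ occur for $l$. This is possible because each local factor occurs for some $l$: it has a genotype, and miniversal unfoldings exist for $l\ge l_0$ by the proposition that occurrence is upward closed.
\item Let $f=\proto{\multi{}}{l}$, whose source is a disjoint union of affine spaces and hence a smooth $X$.
\item Since $\Tlocus{\multi{}}{f}\ni 0$ is nonempty and $\Strato{A}{X}\subset\Strat{B}{X}$, Theorem~\ref{thm:Hilb} gives $\Tlocus{\multi{}}{f}\subset\overline{\Tlocus{\mzeta}{f}}$.
\end{enumerate}
This is condition (2) of Definition~\ref{pro:defT}, so $\multi{}\le_l\mzeta$, and hence $\multi{}\le_\infty\mzeta$.

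The only real obstacle is the bookkeeping of occurrence in the ($\Leftarrow$) direction. The relation $\le_l$ is defined only for multisingularities that occur for $l$, so I must ensure a common $l$ for which both $\multi{}$ and $\mzeta$ occur; taking the maximum of the individual thresholds handles this. A secondary point is that the source of a multigerm prototype is disconnected, which is harmless for Hilbert schemes of points (or one may replace it by a stable map with connected source realizing $\multi{}$, via Proposition~\ref{pro:deform_v2}). Everything else is a direct citation.
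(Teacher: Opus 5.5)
Your argument is correct and matches the paper's intended derivation: apply Theorem~\ref{thm:Hilb} to a stable map (e.g.\ the prototype) with $\Tlocus{\multi{}}{f}\neq\varnothing$, use Definition~\ref{pro:defT} in one direction, and use Proposition~\ref{pro:deform_v2} in the other to transfer the Hilbert-scheme containment to the source. Your separate choice of $l$ so that $\mzeta$ also occurs is harmless but unnecessary, since Theorem~\ref{thm:Hilb} already forces $\Tlocus{\mzeta}{f}\neq\varnothing$ for any $l$ at which $\multi{}$ occurs (this is Corollary~\ref{cor:order}).
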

Theorem \ref{thm:Hilb} follows directly from Propositions \ref{pro:Hilb1}, \ref{pro:Hilb2} and \ref{pro:strat1}. We recall ideas of \cite{Gaf} and \cite{TOmult}. \\
Let $f:X\to Y$ be a finite map between smooth varieties. Its graph $\Gamma_f:X\to X\times Y$ is a regular embedding. By \cite[Def. 1.2]{Gaf}, or \cite[Pro. 3.2]{TOmult} the induces a map
$$i_f:\Hilb_n(X)\to \Hilb_n(X\times Y)\,.$$
is a regular embedding of codimension $\codim i_f =n\dim Y$. \\
Let $\mathcal{U}_X \subset \Hilb_n(X)\times X$ be the tautological family over $\Hilb_n(X)$. The diagonal embedding
$$\mathcal{U}_X \times Y \subset  \Hilb_n(X)\times X\times Y \subset \Hilb_n(X)\times X\times Y\times Y$$
induces a map
$$\Delta: \Hilb_n(X)\times Y \to \Hilb_n(X\times Y)\,.$$
Intuitively, it associates to a subscheme $Z\subset X$ and a point $y\in Y$ the same subscheme in the product $X\times\{y\}$;
cf. \cite[Pro. 1.4]{Gaf} and \cite[Pro. 3.8]{TOmult}. The relative Hilbert scheme $\Hilb_n(f)$ is defined as the fiber product of $i_f$ and $\Delta$.
Let $A$ be an algebra of rank $n$ and $\multi{}$ the corresponding multisingularity. We define a closed subset $C_\multi{}(f) \subset \Hilb_n(f)$ and a locally closed subset $C^o_\multi{}(f)\subset \Hilb_n(f)$ as fiber products, cf. \cite[Def. 4.4]{TOmult}:
\[\begin{tikzcd}
	{C^o_\multi{}(f)} & {C_\multi{}(f)} & {\Hilb_n(f)} & {\Hilb_n(X)} \\
	{\Strato{A}{X}\times Y} & {\Strat{A}{X}\times Y} & {\Hilb_n(X)\times Y} & {\Hilb_n(X\times Y)}
	\arrow[from=1-1, to=1-2]
	\arrow[from=1-1, to=2-1]
	\arrow[from=1-2, to=1-3]
	\arrow[from=1-2, to=2-2]
	\arrow[from=1-3, to=1-4]
	\arrow[from=1-3, to=2-3]
	\arrow["{i_f}"', from=1-4, to=2-4]
	\arrow[from=2-1, to=2-2]
	\arrow[from=2-2, to=2-3]
	\arrow["\Delta", from=2-3, to=2-4]
\end{tikzcd}\]
The projecton $\Hilb_n(X)\times Y\to Y$ induces a map $\pi:\Hilb_n(f) \to Y$. For a projective $X$ the map $\pi$ is proper. \\
For a general map the sets $C^o_\multi{}(f)$ and $C_\multi{}(f)$ may behave badly. We will show that for a stable map the set $C_\multi{}(f)$ is a closure of $C^o_\multi{}(f)$ and the image in $\pi(C_\multi{}(f))$ is nearly the singularity loci $\overline{\Tlocus{\multi{}}{f}}$, see Propositions \ref{pro:dense} and \ref{pro:image}. 
\begin{rem}
	The statement about the image of $C_\multi{}(f)$ in $\pi$ was announced, and proved for cohomology classes, in \cite[Rem. 4.5 and Pro. 4.15]{TOmult}.
\end{rem}
We recall Gaffney's notion of invariant subset \cite[Def. 3.2]{Gaf}. A subset $T\subset \Hilb_n(X)$ is called invariant if it is preserved by germs of biholomorphic maps on $X$. More precisely, let $p_Z\in T$ be a point corresponding to a subscheme $Z\subset X$. We demand that for any open subsets $U,U'\subset X$ such that $Z\subset U$ and isomorphism $h:U\to U'$ the point $p_{h(Z)}$ also lies in $A$. The following properties of invariant subsets are straightforward:
\begin{pro}
	Suppose that a subset $T\subset \Hilb_n(X)$ is invariant
	\begin{enumerate}
		\item The closure $\overline{T}$ and boundary $\overline{T}-T$ are invariant subsets.
		\item Suppose that $T$ is closed. Let $T^{sm}$ be the smooth part of $T$ and $T^{sm,top}$ union of its top dimensional components. Both  $T^{sm}$ and $T^{sm,top}$ are invariant subsets.
	\end{enumerate}
\end{pro}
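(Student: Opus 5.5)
The plan is to check both items directly from the definition of invariance. The guiding idea is that a local biholomorphism $h:U\to U'$ between open subsets of $X$ induces an isomorphism between the open subschemes $\Hilb_n(U)\subset\Hilb_n(X)$ and $\Hilb_n(U')\subset\Hilb_n(X)$ (those parametrising subschemes supported in $U$, resp. $U'$). We take $h$ in the analytic category, and then the induced map $h_*:\Hilb_n(U)\to\Hilb_n(U')$, $p_Z\mapsto p_{h(Z)}$, is a biholomorphism of the corresponding analytic open sets. This follows from functoriality of the Hilbert functor: apply $h$ to the universal family over $\Hilb_n(U)$ and use the universal property, in the analytic category. Its inverse is $(h^{-1})_*$. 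Invariance of $T$ then says exactly that $h_*(T\cap\Hilb_n(U))\subset T\cap \Hilb_n(U')$, and applying the same to $h^{-1}$ gives equality.

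\textbf{Part (1).} Let $p_Z\in\overline T$ and let $h:U\to U'$ with $Z\subset U$. Since $\Hilb_n(U)$ is an open neighbourhood of $p_Z$, we have $p_Z\in\overline{T\cap\Hilb_n(U)}$, with the closure taken inside $\Hilb_n(U)$. Algebraic and analytic closures agree for constructible sets, and in any case $T$ is typically locally closed; here it suffices to work with the analytic closure, since both the strata and their closures are constructible. Because $h_*$ is a homeomorphism onto $\Hilb_n(U')$, it maps closures to closures, so
$$p_{h(Z)}\in \overline{h_*(T\cap\Hilb_n(U))}\subset \overline T.$$
Hence $\overline T$ is invariant. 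For the boundary, if $p_Z\in\overline T-T$ then $p_{h(Z)}\in\overline T$. If we had $p_{h(Z)}\in T$, then applying invariance of $T$ to $h^{-1}$ would give $p_Z\in T$, a contradiction. So $\overline T-T$ is invariant.

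\textbf{Part (2).} Let $T$ be closed. The key point is that $h_*$ restricts to a biholomorphism $T\cap\Hilb_n(U)\to T\cap\Hilb_n(U')$. Biholomorphisms preserve the local analytic type of $T$, so a point of $T$ is smooth exactly when its image is smooth. Since the notion of smooth point is local, it can be computed in the open sets $\Hilb_n(U)$ and $\Hilb_n(U')$, which gives invariance of $T^{sm}$. For $T^{sm,top}$, note that $p_Z$ lies in a top-dimensional component of $T^{sm}$ if and only if the local dimension of $T$ at $p_Z$ equals $\dim T$, because at a smooth point there is a unique local component. Local dimension is preserved by the biholomorphism $h_*$, so $T^{sm,top}$ is invariant.

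The main obstacle is justifying that $h_*$ is a biholomorphism between open pieces of $\Hilb_n(X)$ when $h$ is only analytic. I would handle this via the analytic Douady space: for a quasi-projective $X$ it coincides with $\Hilb_n(X)^{an}$, and it is functorial under open embeddings and biholomorphisms. Alternatively, one can avoid the Douady space by noting that everything is local near the finite support of $Z$, so it is enough to use the local (punctual) description of $\Hilb_n$ near each point. Apart from this step, the argument is purely point-set topological.
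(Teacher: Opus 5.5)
Your proposal is correct and is essentially the argument the paper leaves implicit (the paper calls these properties straightforward and gives no proof): a local biholomorphism $h\colon U\to U'$ induces, via the Douady space, a biholomorphism $h_*$ between the open pieces $\Hilb_n(U)$ and $\Hilb_n(U')$ of $\Hilb_n(X)$, and such a map preserves closures, smooth points and local dimensions. The one point to keep explicit is that your closure argument is carried out in the analytic topology; this agrees with the Zariski closure when $T$ is constructible (e.g.\ locally closed), which is exactly the setting in which the proposition is used in Corollary~\ref{cor:inv}.
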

\begin{cor} \label{cor:inv}
	Suppose that $M\subset \Hilb_n(X)$ is an invariant locally closed subset. Then its boundary $\overline M-M$ can be stratified into finitely many invariant smooth pure-dimensional locally closed subsets.
\end{cor}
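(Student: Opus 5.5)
The plan is to build the stratification inductively by stratifying the boundary and then the singular loci it leaves behind, using Noetherian induction on dimension and part (1)–(2) of the preceding proposition at every step. First, since $M$ is locally closed in the finite type scheme $\Hilb_n(X)$, the boundary $\partial M=\overline M-M$ is closed. By part (1) of the preceding proposition, $\overline M$ and $\partial M$ are invariant. So it suffices to prove the following claim: \emph{every closed invariant subset $T\subset \Hilb_n(X)$ admits a finite stratification into invariant, smooth, pure-dimensional, locally closed subsets.} The corollary is then the claim applied to $T=\partial M$.

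The claim will be proved by Noetherian induction on $\dim T$, the case $T=\varnothing$ being trivial. Given a closed invariant $T$ (with its reduced structure), part (2) of the preceding proposition shows that $T^{sm,top}$ is invariant. It is also smooth and pure-dimensional, of dimension $\dim T$. Moreover it is open in $T$: the smooth locus of a reduced variety is open, and removing the closure of the lower-dimensional components keeps it open. Hence $T^{sm,top}$ is locally closed in $\Hilb_n(X)$, and it is nonempty and dense in the top-dimensional components whenever $T\neq\varnothing$.

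The complement $T'=T-T^{sm,top}$ is closed. It is invariant because both $T$ and $T^{sm,top}$ are invariant and biholomorphic germs act bijectively on $Z$'s: if $p_Z\in T'$ and $h$ is such a germ, then $p_{h(Z)}\in T$. Also $p_{h(Z)}\notin T^{sm,top}$, for otherwise applying $h^{-1}$ would put $p_Z$ in $T^{sm,top}$. Finally, $T'$ has strictly fewer top-dimensional components, or smaller dimension. Therefore the induction, ordered by the pair (dimension, number of top-dimensional components), terminates after finitely many steps, and collecting the pieces $T^{sm,top}, (T')^{sm,top},\dots$ gives the desired finite stratification.

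The only point needing care, which I expect to be the main (though mild) obstacle, is justifying that $T^{sm,top}$ is open in $T$. I would handle it via the scheme structure: $T$ is of finite type over $\CC$, so its smooth locus is open. The union of the lower-dimensional irreducible components is closed, and $T^{sm,top}$ is the smooth locus minus that union. Points lying on two top-dimensional components are singular and hence are already excluded.
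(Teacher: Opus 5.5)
Your proposal is correct and is the argument the paper leaves implicit: $\overline M-M$ is closed and invariant by part (1) of the preceding proposition, and you repeatedly remove the invariant, smooth, pure-dimensional open piece $T^{sm,top}$ given by part (2), with the remainder again closed and invariant. As a small simplification, $T^{sm,top}$ is dense in every top-dimensional component of $T$, so $T-T^{sm,top}$ always has strictly smaller dimension; plain induction on $\dim T$ therefore suffices, and the ordering by number of top-dimensional components is unnecessary.
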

Gaffney characterized stable maps in terms of invariant manifolds of Hilbert scheme, see \cite[Thm. 3.5 and Thm. 3.8]{Gaf}. He proved that for a stable map $f$ and invariant smooth manifold $M\subset \Hilb_n(X)$
the intersection of $\Delta(M\times Y)$ with $i_f$ is smooth of pure dimension \hbox{$\dim M+\dim Y-\codim(i_f)$.} In particular for $M=\Strato{A}{X}$ we obtain
\begin{align}\label{w:Cdim}
\dim C^o_\multi{}(f)=\dim \Strato{A}{X}+\dim Y-\codim(i_f)\,.
\end{align}
Denote this number by $c_\multi{}$.
\begin{pro} \label{lem:dim}
	Let $\multi{}$ be a multisingularity. All the components of $C_\multi{}(f)$ have dimension at least $c_\multi{}$.
\end{pro}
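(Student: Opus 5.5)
The claim is a lower bound on the dimensions of the components of a closed subset cut out as a fiber product. We obtain it from the standard excess-intersection inequality: intersecting with a regular embedding of codimension $c$ drops dimension by at most $c$ on every component.

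The key steps, in order, are as follows.

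First, rewrite $C_\multi{}(f)$ directly as a single fiber product. Composing the squares of the diagram gives a cartesian square
$$C_\multi{}(f)=\big(\Strat{A}{X}\times Y\big)\times_{\Hilb_n(X\times Y)}\Hilb_n(X),$$
where the right map is $i_f$ and the left map is $\Delta$ restricted to $\Strat{A}{X}\times Y$.

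Second, note that $i_f$ is a regular embedding of codimension $n\dim Y$, as recalled above from \cite{Gaf} and \cite{TOmult}. Hence $C_\multi{}(f)$ is the preimage, under the morphism $\Delta|_{\Strat{A}{X}\times Y}$, of a regularly embedded closed subscheme of codimension $n\dim Y$. Locally on $\Hilb_n(X\times Y)$, the image of $i_f$ is cut out by $n\dim Y$ equations. Their pullbacks cut out $C_\multi{}(f)$ inside $\Strat{A}{X}\times Y$.

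Third, apply Krull's height theorem. The scheme $\Strat{A}{X}\times Y$ is irreducible: $\Strato{A}{X}$ is smooth and irreducible, since it is a homogeneous family fibred over configurations of supports, or by Appendix~\ref{s:deformation}. Its closure is therefore irreducible, of dimension $\dim\Strato{A}{X}+\dim Y$ by Proposition~\ref{pro:dim}. Every irreducible component of the locus cut out by $n\dim Y$ equations in it then has dimension at least
$$\dim \Strato{A}{X}+\dim Y-n\dim Y=\dim\Strato{A}{X}+\dim Y-\codim(i_f)=c_\multi{}.$$
This is exactly the number appearing in \eqref{w:Cdim}. If irreducibility of $\Strato{A}{X}$ were in doubt, it suffices to use that every component of $\Strat{A}{X}$ has dimension $\dim\Strato{A}{X}$, being the closure of the equidimensional smooth stratum, and to apply Krull componentwise.

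The main obstacle I expect is justifying that Krull's bound applies on every component. This requires that the local equations of $i_f$ pull back to at most $n\dim Y$ equations locally, which holds because $i_f$ is a regular embedding and hence a local complete intersection. It also requires that each component of $\Strat{A}{X}\times Y$ has the expected dimension. Both points follow from the recalled facts, and no stability assumption on $f$ is needed for this inequality.
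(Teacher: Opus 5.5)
Your argument is correct and is essentially the paper's proof: the paper observes that $\Strat{A}{X}$ is pure-dimensional of dimension $\dim\Strato{A}{X}$, being the closure of a smooth pure-dimensional stratum, and then invokes a standard lower bound for dimensions of components of intersections \cite[Lem.~43.3.12]{Stacks}, which your Krull height-theorem step establishes directly from the local equations of the regular embedding $i_f$. One caveat: the Appendix proves only smoothness and the dimension formula for $\Strato{A}{X}$, not irreducibility, so your irreducibility claim is not supported there; your componentwise fallback is exactly the paper's route and is all you need.
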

\begin{proof}
	The set $\Strato{A}{X}$ is smooth of pure dimension. Therefore its closure $\Strat{A}{X}$ is also pure dimensional of the same dimension. The conclusion follows from \cite[Lem. 43.3.12]{Stacks}.
\end{proof}
\begin{pro} \label{pro:dense}
	Suppose that the map $f$ is stable. For any multisingularity $\multi{}$ the set $C^o_\multi{}(f)$ is open dense subset of $C_\multi{}(f)$.
\end{pro}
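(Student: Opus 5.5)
The proof will be a dimension count. $C^o_\multi{}(f)$ is open in $C_\multi{}(f)$, being the preimage of the open subset $\Strato{A}{X}\times Y$ of $\Strat{A}{X}\times Y$. It remains to show density, i.e.\ that no irreducible component of $C_\multi{}(f)$ is contained in the complement $C_\multi{}(f)-C^o_\multi{}(f)$. By Proposition \ref{lem:dim} every component of $C_\multi{}(f)$ has dimension at least $c_\multi{}$. So it suffices to prove
\[
\dim\big(C_\multi{}(f)-C^o_\multi{}(f)\big)<c_\multi{}.
\]
Then no component can lie entirely in the boundary, and $C^o_\multi{}(f)$ is dense.

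To bound the boundary, apply Corollary \ref{cor:inv} to the invariant locally closed set $\Strato{A}{X}$. This stratifies $\Strat{A}{X}-\Strato{A}{X}$ into finitely many invariant smooth pure-dimensional locally closed pieces $M_1,\dots,M_r$. Each $M_j$ has $\dim M_j<\dim\Strato{A}{X}$, since it lies in the boundary of the irreducible components of the closure. The set $C_\multi{}(f)-C^o_\multi{}(f)$ is the union over $j$ of the preimages of $M_j\times Y$ under $\Hilb_n(f)\to\Hilb_n(X)\times Y$. Equivalently, it is the union of the intersections $\Delta(M_j\times Y)\cap i_f$ in $\Hilb_n(X\times Y)$. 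By Gaffney's theorem, applied to the stable map $f$ and the invariant smooth manifolds $M_j$, each such intersection is smooth of pure dimension
\[
\dim M_j+\dim Y-\codim(i_f)<\dim\Strato{A}{X}+\dim Y-\codim(i_f)=c_\multi{},
\]
where the last equality is \eqref{w:Cdim}. If an intersection is empty, it contributes nothing. Taking the finite union gives the required strict inequality.

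The main obstacle is the legitimacy of applying Gaffney's transversality result to the boundary pieces $M_j$. First, Gaffney's theorem requires invariant smooth submanifolds, which is exactly why Corollary \ref{cor:inv} is stated. Second, one must check that the fibre product defining $C_\multi{}(f)$ restricts, set-theoretically, to the intersections $\Delta(M_j\times Y)\cap i_f$, so that dimensions can be computed there. Third, one must check the strict inequality $\dim M_j<\dim\Strato{A}{X}$. That inequality holds because the $M_j$ lie in $\overline{\Strato{A}{X}}-\Strato{A}{X}$ and $\Strato{A}{X}$ is smooth of pure dimension by Proposition \ref{pro:dim}. The remaining steps are formal.
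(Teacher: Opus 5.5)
Your proposal is correct and follows the paper's proof essentially step for step: openness because $\Strato{A}{X}$ is open in its closure, then the bound $\dim\big(C_{\multi{}}(f)-C^o_{\multi{}}(f)\big)<c_{\multi{}}$ obtained from the stratification of Corollary~\ref{cor:inv}, Gaffney's transversality applied to each invariant stratum, and the lower bound of Proposition~\ref{lem:dim}. Your explicit justification of the strict inequality $\dim M_j<\dim\Strato{A}{X}$ (every component of the closure meets the open dense, pure-dimensional stratum) is a point the paper leaves implicit.
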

\begin{proof}
	$C^o_\multi{}(f)$ is open in $C_\multi{}(f)$ because $\Strato{A}{X}$ is open in $\Strat{A}{X}$. \\
	By Proposition \ref{lem:dim}  it is enough to show that the  dimension of the complement of $C^o_\multi{}(f)$ is smaller than $c_\multi{}$. By Corollary \ref{cor:inv} we can stratify the complement $\Strat{A}{X}-\Strato{A}{X}$ into finitely many smooth pure-dimensional invariant strata $S_i$, i.e.
	$$\Strat{A}{X}-\Strato{A}{X}=\bigcup S_i$$
	Intersecting with $i_f$ we obtain
	$$C_\multi{}(f)-C^o_\multi{}(f)=\bigcup i_f^*(S_i\times Y)\,.$$
	By Gaffney characterization of stable maps \cite[Thm. 3.5]{Gaf} we have
	$$\dim i_f^*(S_i\times Y)=\dim S_i +\dim Y -\codim(i_f) < \dim \Strato{A}{X} +\dim Y-\codim(i_f)=c_\multi{}\,.$$ 
	There are finitely many strata, thus the dimension of their union is also smaller than $c_\multi{}$.
\end{proof}

\begin{pro} \label{pro:image}
	Let $\multi{}$ be a multisingularity of dimension $n$. Consider the open subset
	$$Y_{\le n}=\{y\in Y| \dim \multi{y}\le n\}\,.$$
	\begin{enumerate}
		\item We have
		$$
		\pi(C^o_\multi{}(f))\cap Y_{\le n}=\Tlocus{\multi{}}{f}\,,
		$$
        The restriction $\pi: C^o_\multi{}(f) \cap \pi^{-1}(Y_{\le n}) \to \Tlocus{\multi{}}{f}$ is 1-1.
		\item Suppose that $f$ is stable. Then
		$$
		\pi(C_\multi{}(f)) \cap Y_{\le n}\subset \overline{\Tlocus{\multi{}}{f}} \cap Y_{\le n}\,,
		$$
        Moreover, for projective $X$ we have an equality.
	\end{enumerate}
\end{pro}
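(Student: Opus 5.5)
Part (1) is pointwise bookkeeping; part (2) is a closure argument using Proposition \ref{pro:dense}.

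For (1), I would first describe the closed points of $\Hilb_n(f)$. A point of the fiber product of $i_f$ and $\Delta$ is a pair $(Z,y)$ with $Z\in\Hilb_n(X)$ and $y\in Y$. The graph embedding of $Z$ must equal $Z\times\{y\}\subset X\times Y$. Scheme-theoretically this says exactly that $Z$ is a closed subscheme of the scheme-theoretic fiber $F_y=f^{-1}(y)$. So $\pi^{-1}(y)$ is identified with the set of length-$n$ subschemes of $F_y$, and $C^o_\multi{}(f)\cap\pi^{-1}(y)$ consists of those $Z\subset F_y$ with $\O_Z(Z)\simeq A$.

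Now let $y\in Y_{\le n}$. By Corollary \ref{cor:formal_local}, $\dim_\CC\O_{F_y}(F_y)=\dim\multi{y}\le n$. Hence a length-$n$ subscheme $Z\subset F_y$ exists only if $\dim\multi{y}=n$, and then $Z=F_y$. Therefore $\pi^{-1}(y)$ is either empty or the single point $F_y$. That point lies in $C^o_\multi{}(f)$ iff $\O_{F_y}(F_y)\simeq A$, i.e. iff $\multi{y}=\multi{}$, again by Corollary \ref{cor:formal_local}. This gives both the equality in (1) and the injectivity.

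For the inclusion in (2), I would use Proposition \ref{pro:dense} for stable $f$: $C_\multi{}(f)=\overline{C^o_\multi{}(f)}$. By continuity of $\pi$,
$$\pi(C_\multi{}(f))\subset\overline{\pi(C^o_\multi{}(f))}.$$
Intersect with $Y_{\le n}$, which is open by the semicontinuity argument used for the dimension inequality above. For an open set $U$ we have $\overline{S}\cap U\subset\overline{S\cap U}$. Applying this and part (1) gives
$$\pi(C_\multi{}(f))\cap Y_{\le n}\subset\overline{\pi(C^o_\multi{}(f))\cap Y_{\le n}}\cap Y_{\le n}=\overline{\Tlocus{\multi{}}{f}}\cap Y_{\le n}.$$

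For the equality when $X$ is projective, $\pi$ is proper and hence closed. Then $\pi(C_\multi{}(f))$ is closed and contains $\pi(C^o_\multi{}(f))\supset\Tlocus{\multi{}}{f}$, so it contains $\overline{\Tlocus{\multi{}}{f}}$. Intersecting with $Y_{\le n}$ gives the reverse inclusion.

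The step I expect to need the most care is the scheme-theoretic identification of $\Hilb_n(f)$, at least on closed points, with subschemes of the fibers. One has to check that the fiber product of $i_f$ and $\Delta$ really imposes $Z\subset F_y$ as schemes, not merely set-theoretically. I would do this via the functor of points: a $\CC$-point of the fiber product is a subscheme $Z\subset X$ whose graph lies in $X\times\{y\}$. This means $f|_Z$ factors through $\operatorname{Spec}\CC\to y$, i.e. $f^*\m_y\cdot\O_Z=0$, which is exactly $Z\subset F_y$.
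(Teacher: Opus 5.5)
Your proposal is correct and follows essentially the paper's argument. Over each $y$ you identify the points of $C^o_\multi{}(f)$ with subschemes $Z\subset f^{-1}(y)$ satisfying $\O_Z(Z)\simeq A$, and the rank bound on $Y_{\le n}$ forces $Z=f^{-1}(y)$; (2) then follows from Proposition~\ref{pro:dense}, continuity of $\pi$, and properness of $\pi$ for projective $X$. The only differences are that you check the scheme-theoretic containment $Z\subset F_y$ directly via the functor of points, where the paper cites Gaffney's Proposition 1.4, and that you make explicit the step $\overline{S}\cap U\subset\overline{S\cap U}$ for the open set $U=Y_{\le n}$, which the paper leaves implicit.
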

\begin{proof}
    Let $A$ be the algebra corresponding to singularity $\multi{}$.
	For the first point consider a point $p_Z\in \Hilb_n(X)$ corresponding to a subscheme $Z\subset X$. By \cite[Pro. 1.4]{Gaf} the point $(p_Z,y)$ lies in $C^o_\multi{}(f)$ if and only if $p_Z$ lies in $\Strato{A}{X}$ and $Z \subseteq f^{-1}(y)$. For $y \in Y_{\le n}$ the fiber $f^{-1}(y)$ has at most the same rank as $Z$, thus
	$$
	Z \subseteq f^{-1}(y) \iff Z=f^{-1}(y)\,.
	$$
	The second point follows from the first, Proposition \ref{pro:dense} and continuity of the map $\pi$. \\
	The "moreover" part follows from the fact that for a projective $X$ the map $\pi$ is proper.
\end{proof}
\begin{pro} \label{pro:Hilb1}
	Let $A$ and $B$ be algebras of rank $n$. Let $\multi{}$ and $\mzeta$ be the corresponding multisingularities. Suppose that the map $f$ is stable and $\Strato{A}{X}\subset \Strat{B}{X}$. Then
	$$\Tlocus{\multi{}}{f}\subset\overline{\Tlocus{\mzeta}{f}}.$$
\end{pro}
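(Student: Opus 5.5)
Since $\Strato{A}{X}\subset\Strat{B}{X}$, taking products with $Y$ and pulling back along $i_f$ (that is, intersecting with $\Hilb_n(f)$) gives $C^o_\multi{}(f)\subset C_\mzeta(f)$. The plan is to push this inclusion forward along $\pi$ and then use Proposition~\ref{pro:image} to identify the images with singularity loci, working inside the open set $Y_{\le n}$.

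\textbf{Step 1.} Let $y\in\Tlocus{\multi{}}{f}$. Then $\dim\multi{y}=n$, so $y\in Y_{\le n}$. By Proposition~\ref{pro:image}(1) there is a point $p\in C^o_\multi{}(f)$ with $\pi(p)=y$: it is the point $(p_Z,y)$ with $Z=f^{-1}(y)$, whose algebra is $A$.

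\textbf{Step 2.} From the inclusion above, $p\in C_\mzeta(f)$. Hence
\[
y\in\pi(C_\mzeta(f))\cap Y_{\le n}.
\]
Since $B$ also has rank $n$, Proposition~\ref{pro:image}(2) applies to $\mzeta$, and gives $y\in\overline{\Tlocus{\mzeta}{f}}\cap Y_{\le n}$. The closure here is taken in $Y$, so this proves the claim.

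\textbf{Main obstacle.} The work lies in Proposition~\ref{pro:image}(2) for $\mzeta$, which has already been established. It relies on two facts. First, $C^o_\mzeta(f)$ is dense in $C_\mzeta(f)$ (Proposition~\ref{pro:dense}, using stability through Gaffney's theorem). Second, $\pi$ is continuous, so that
\[
\pi(C_\mzeta(f))\subset\overline{\pi(C^o_\mzeta(f))}.
\]
Intersecting with the open set $Y_{\le n}$, where $\pi(C^o_\mzeta(f))=\Tlocus{\mzeta}{f}$, then yields the inclusion we use. No properness of $\pi$ is needed, because we only need the inclusion in this direction and not equality.

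The only point to check with care is that the hypothesis $\Strato{A}{X}\subset\Strat{B}{X}$ survives the fiber product, giving $C^o_\multi{}(f)\subset C_\mzeta(f)$. This is immediate from the definitions as fiber products of $\Strato{A}{X}\times Y\subset\Strat{B}{X}\times Y$ with $\Hilb_n(f)$, at least on closed points, which is all that set-theoretic closure statements require.
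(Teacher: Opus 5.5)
Your proof is correct and is essentially the paper's argument: you lift $y$ to the point $(p_{f^{-1}(y)},y)\in C^o_{\multi{}}(f)\subseteq C_{\mzeta}(f)$, then apply Proposition~\ref{pro:image}(2) on the open set $Y_{\le n}$. Your remark that this direction needs only the density of $C^o_{\mzeta}(f)$ and the continuity of $\pi$, and not properness, is also accurate.
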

\begin{proof}
	Let $y\in \Tlocus{\multi{}}{f}$. The fiber $f^{-1}(y)$ corresponds to a point $p_{f^{-1}(y)} \in \Strato{A}{X}$. We know that $\Strato{A}{X} \subseteq \Strat{B}{X}$, thus
	$$(p_{f^{-1}(y)},y) \in C^o_\multi{}(f)\subseteq  C_\mzeta(f)\,. $$ 
	Therefore, $y\in \pi(C_\mzeta(f))$. The map $f$ is stable and $y\in Y_{\le n}$, so Proposition \ref{pro:image} (2)  implies that $y\in \overline{\Tlocus{\mzeta}{f}}$.
\end{proof}
\begin{pro} \label{pro:Hilb2}
	Let $A$ and $B$ be algebras of rank $n$. Let $\multi{}$ and $\mzeta$ be the corresponding multisingularities. Suppose that $\Tlocus{\multi{}}{f}\cap \overline{\Tlocus{\mzeta}{f}}\neq \varnothing$. Then
	$$\Strato{A}{X}\cap \Strat{B}{X}\neq\varnothing\,.$$
\end{pro}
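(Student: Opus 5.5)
Take a point $y\in\Tlocus{\multi{}}{f}\cap\overline{\Tlocus{\mzeta}{f}}$. The idea is to realise the degeneration of $f^{-1}(y')$ to $f^{-1}(y)$, as $y'\to y$ inside $\Tlocus{\mzeta}{f}$, as a curve in $\Hilb_n(X)$.

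First choose a sequence, or better an analytic or algebraic curve germ, $y_t\to y$ with $y_t\in\Tlocus{\mzeta}{f}$ for $t\neq0$. The curve selection lemma applies because $\Tlocus{\mzeta}{f}$ is constructible. For $t\neq0$ the scheme-theoretic fiber $Z_t=f^{-1}(y_t)$ has coordinate ring isomorphic to $B$ by Corollary~\ref{cor:formal_local}. It therefore has rank $n$ and defines a point $p_{Z_t}\in\Strato{B}{X}$.

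Next, since $f$ is finite it is flat over the locus where the fiber rank is constant. Here the rank $\dim_\CC\mathcal{F}(y)$ of $\mathcal{F}=f_*\O_X$ equals $n$ both at $y$ and at $y_t$. In a neighbourhood of $y$ the rank is at most $n$ by upper semicontinuity. So on the curve $\{y_t\}$ the rank of the coherent sheaf $\mathcal{F}$ is constant equal to $n$. Over the reduced curve germ this makes the pullback of $\mathcal{F}$ locally free of rank $n$. Hence the family $f^{-1}(y_t)$, including $t=0$, is a flat family of length $n$ subschemes of $X$. Alternatively, one can use that $X\to Y$ is flat near $y$, since both are smooth and $f$ is finite; then $f_*\O_X$ is locally free, of rank $n$ near $y$ by the computation at $y$. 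This route is cleaner.

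The flat family gives a morphism from the curve germ to $\Hilb_n(X)$ sending $t\mapsto p_{Z_t}$, continuous at $t=0$, with $p_{Z_0}=p_{f^{-1}(y)}\in\Strato{A}{X}$. Therefore $p_{f^{-1}(y)}\in\overline{\Strato{B}{X}}=\Strat{B}{X}$, so $\Strato{A}{X}\cap\Strat{B}{X}\neq\varnothing$. Equivalently, one can argue with $C_\mzeta(f)$: the points $(p_{Z_t},y_t)$ lie in $C^o_\mzeta(f)$, and their limit $(p_{f^{-1}(y)},y)$ lies in the closed set $C_\mzeta(f)$, which projects to $\Strat{B}{X}$.

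The main obstacle is the flatness step, i.e.\ justifying that the fibers over $y_t$ really converge in $\Hilb_n(X)$ to $f^{-1}(y)$ and that no length escapes. The key inputs are miracle flatness for a finite map between smooth varieties of equal dimension, or the upper semicontinuity argument above restricted to the curve, together with the universal property of the Hilbert scheme. If $\dim X<\dim Y$, as for positive relative dimension, miracle flatness fails. In that case I would rely instead on constancy of the fiber rank $n$ along the curve, which gives flatness over a reduced one-dimensional base.
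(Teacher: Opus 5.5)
Your argument is correct and is essentially the paper's. The paper takes the locally closed set $Y_n$ of points with fibre rank $n$, with its reduced structure, and notes that the family of fibres $f^{-1}(y)$ over it has constant rank and is therefore flat. This gives a morphism $\rho\colon Y_n\to\Hilb_n(X)$ with $\rho(\Tlocus{\multi{}}{f})\subset\Strato{A}{X}$ and $\rho(\Tlocus{\mzeta}{f})\subset\Strato{B}{X}$, and the paper concludes by continuity; it needs no curve selection, and your curve germ is just a restriction of this family.

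One correction: discard the ``cleaner'' miracle-flatness route entirely. For relative dimension $l\ge 1$ the sheaf $f_*\O_X$ is supported on the proper closed subset $f(X)$, so it is not locally free near $y$. The constant-rank-over-a-reduced-base argument, which you fall back on at the end, is the one that works.
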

\begin{proof}
    Consider the subset $Y_n\subset Y$ of points with fiber of rank $n$, i.e.
	$$
	Y_n=\{y\in Y| \dim(\multi{y})=n\}\,.
	$$
	This set is locally closed. We consider a reduced scheme structure on it. Let $\Delta:Y\to Y\times Y$ be the diagonal map. Schemes $Z$ and $Z_n$ are defined as pullbacks:
	$$
	\begin{tikzcd}
		{Z_n} & Z & Y \\
		{Y_n\times X} & {Y\times X} & {Y\times Y}
		\arrow[from=1-1, to=1-2]
		\arrow[from=1-1, to=2-1]
		\arrow[from=1-2, to=1-3]
		\arrow[from=1-2, to=2-2]
		\arrow["\Delta", from=1-3, to=2-3]
		\arrow[from=2-1, to=2-2]
		\arrow["{\id_Y\times f}"', from=2-2, to=2-3]
	\end{tikzcd}
	$$
	We treat $Z_n$ as a family of subschemes of $X$ parametrized by $Y_n$. The fiber of $Z_n$ over $y\in Y_n$ is the subscheme $f^{-1}(y)$. By definition of $Y_n$ the family $Z_n$ has a constant Hilbert polynomial, so
    it induces a morphism $$\rho:Y_n \to \Hilb_n(X).$$
    We have $\rho(\Tlocus{\multi{}}{f}) \subset \Strato{A}{X}$ and $\rho(\Tlocus{\mzeta}{f}) \subset \Strato{B}{X}$.
	Proposition follows from the continuity of $\rho$.
\end{proof}

\subsection{Consequences}
\begin{cor} \label{cor:order}
    Let $A$ and $B$ be algebras of rank $n$. Let $\multi{}$ and $\mzeta$ be the corresponding multisingularities.
        Suppose that $A\le_{\Hilb}B$ (equivalently $\multi{} \le_\infty \mzeta$) and $\multi{}$ occurs for $l$. Then $\mzeta$ occurs for $l$ and $\multi{} \le_l \mzeta$.
\end{cor}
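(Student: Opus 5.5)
We want: if $A\le_{\Hilb}B$ and $\multi{}$ occurs for $l$, then $\mzeta$ occurs for $l$ and $\multi{}\le_l\mzeta$. The plan is to run Theorem~\ref{thm:Hilb} on one well-chosen stable map of relative dimension $l$, and read off both conclusions from the same closure statement.

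First we choose the test map. Since $\multi{}$ occurs for $l$, the prototype $\proto{\multi{}}{l}:X=\bigsqcup V_i\to Y=W$ is a stable germ, and by Proposition~\ref{pro:proto_0} we have $0\in\Tlocus{\multi{}}{\proto{\multi{}}{l}}$. We take $f$ to be a finite algebraic stable representative of this germ on suitable neighbourhoods $X$ of the preimage of $0$ and $Y$ of $0$. This is possible because prototypes are polynomial miniversal unfoldings, which are finite over a neighbourhood of the origin; if needed, one can replace $Y$ by an étale or Zariski neighbourhood. Then $f$ is stable of relative dimension $l$, and $\Tlocus{\multi{}}{f}\neq\varnothing$.

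Next we transfer the hypothesis to $X$. By the definition of $\le_{\Hilb}$ and Proposition~\ref{pro:deform_v2}, $A\le_{\Hilb}B$ gives $\Strato{A}{X}\subset\Strat{B}{X}$ for this particular smooth $X$. Theorem~\ref{thm:Hilb} then yields $\Tlocus{\multi{}}{f}\subset\overline{\Tlocus{\mzeta}{f}}$, and in particular $0\in\overline{\Tlocus{\mzeta}{f}}$.

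The main obstacle is that the definitions of $\le_l$ (Definition~\ref{pro:defT}) presuppose that $\mzeta$ occurs for $l$, so we must establish this first. Because $\overline{\Tlocus{\mzeta}{f}}$ contains $0$, the locus $\Tlocus{\mzeta}{f}$ is nonempty. Pick $y$ in it. The germ of $f$ at $f^{-1}(y)$ is a germ of a stable map, hence stable, and its local algebras are exactly the factors of $B$. Therefore every singularity in $\mzeta$ contains a stable germ, i.e. $\mzeta$ occurs for $l$.

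With occurrence settled, condition~(2) of Definition~\ref{pro:defT} holds for $f$, since $0\in\Tlocus{\multi{}}{f}\cap\overline{\Tlocus{\mzeta}{f}}$. This gives $\multi{}\le_l\mzeta$. The only point that needs care is that $f$ is a genuine finite algebraic stable map, which is what Theorem~\ref{thm:Hilb} and Definition~\ref{pro:defT} require.
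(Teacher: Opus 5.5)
Your argument is correct and is essentially the paper's proof; the paper simply says the corollary is a direct consequence of Theorem~\ref{thm:Hilb}. You unpack exactly that deduction: transfer the inclusion of strata to the source of a stable map realizing $\multi{}$ via Proposition~\ref{pro:deform_v2}, apply Theorem~\ref{thm:Hilb} to get $0\in\overline{\Tlocus{\mzeta}{f}}$, read off that $\mzeta$ occurs for $l$ from a point of the nonempty locus $\Tlocus{\mzeta}{f}$ (implicitly using the standard fact that sub-multigerms of a stable multigerm are stable), and conclude with condition~(2) of Definition~\ref{pro:defT}.
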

\begin{proof}
It is a direct consequence of Theorem \ref{thm:Hilb}.
\end{proof}

\begin{cor}\label{cor:Der}
    Let $A$ be an algebra of rank $n$ and $\multi{}$ the corresponding multisingularity. We have
    $$
    \tcodim_l(\multi{})=nl+\dim \Der(A,A)\,.
    $$
\end{cor}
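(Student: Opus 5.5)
We prove $\tcodim_l(\multi{})=nl+\dim\Der(A,A)$ by a dimension count on the relative Hilbert scheme of a stable map. Fix $l\ge 1$ and, replacing $l$ by a larger value if necessary, assume $\multi{}$ occurs for $l$. Formula \eqref{w:scodim} gives $\tcodim_l(\multi{})=nl+b(\multi{})$ with $b$ independent of $l$, so it suffices to prove the identity for one suitable $l$. Choose a stable map $f:X\to Y$ of relative dimension $l$ with $\Tlocus{\multi{}}{f}\neq\varnothing$; the prototype $\proto{\multi{}}{l}$ itself will do, with $X=\bigsqcup V_i$ and $Y=W$. The plan is to compute $\dim\Tlocus{\multi{}}{f}$ in two ways.

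First, by Remark~\ref{rem:codim} and Proposition~\ref{pro:codim},
\[
\dim \Tlocus{\multi{}}{f}=\dim Y-\tcodim_l(\multi{}).
\]

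Second, by Proposition~\ref{pro:image}(1), the projection $\pi$ maps $C^o_\multi{}(f)\cap\pi^{-1}(Y_{\le n})$ bijectively onto $\Tlocus{\multi{}}{f}$. Moreover, $\Tlocus{\multi{}}{f}\subset Y_{\le n}$ and $Y_{\le n}$ is open. Since $C^o_\multi{}(f)$ is smooth of pure dimension $c_\multi{}$ by Gaffney's result \eqref{w:Cdim}, its open subset $C^o_\multi{}(f)\cap\pi^{-1}(Y_{\le n})$ has dimension $c_\multi{}$ wherever it is nonempty, and it is nonempty because $\Tlocus{\multi{}}{f}\neq\varnothing$. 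An injective morphism of varieties preserves dimension (its fibres are finite), so $\dim\Tlocus{\multi{}}{f}=c_\multi{}$. Substituting Proposition~\ref{pro:dim} and $\codim i_f=n\dim Y$ gives
\[
c_\multi{}=n\dim X-\dim\Der(A,A)+\dim Y-n\dim Y=-nl-\dim\Der(A,A)+\dim Y,
\]
using $\dim Y-\dim X=l$. Comparing the two expressions for $\dim\Tlocus{\multi{}}{f}$ yields $\tcodim_l(\multi{})=nl+\dim\Der(A,A)$.

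The main obstacle is the dimension bookkeeping. The source of the prototype of a multisingularity is a disjoint union, possibly of components of different dimensions, so $X$ is not pure-dimensional and $\dim X$ is ill-defined. We will avoid this by noting that every component has dimension $\dim Y-l$ (relative dimension $l$), so $\dim X=\dim Y-l$ is unambiguous. We must also check that Proposition~\ref{pro:dim} and Gaffney's dimension formula apply to this $X$; both are local statements, and if needed we can instead choose a connected stable map realizing $\multi{}$, for instance a generic projection of a projective variety in high dimension. Finally, we will record why bijectivity of $\pi$ on the relevant set suffices: an injective morphism has finite fibres, so it does not lower dimension, and its image is constructible of the same dimension.
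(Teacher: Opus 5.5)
Your argument is correct and is essentially the paper's proof. Both compute $\dim\Tlocus{\multi{}}{f}$ through the bijection of Proposition~\ref{pro:image}(1) with $C^o_{\multi{}}(f)$, whose dimension comes from Gaffney's formula \eqref{w:Cdim} together with Proposition~\ref{pro:dim} and $\codim i_f=n\dim Y$. You are more careful than the paper in checking that the open piece $C^o_{\multi{}}(f)\cap\pi^{-1}(Y_{\le n})$ still has dimension $c_{\multi{}}$ and that an injective morphism preserves dimension; the reduction via \eqref{w:scodim} to a single value of $l$ is harmless but unnecessary, since the count works for every $l$ for which $\multi{}$ occurs.
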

\begin{proof}
    Let $f: X\to Y$ be a stable map of codimension $l$ such that $\Tlocus{\multi{}}{f}\neq \varnothing$. Let $m=\dim X$ and $m+l=\dim Y$.
    Proposition \ref{pro:image} imply that
    $$\dim C^o_\multi{}(f) =\dim \Tlocus{\multi{}}{f}\,.$$
    Therefore Proposition \ref{pro:dim} implies that
    \begin{align*}
    \tcodim_l(\multi{})
    &=\dim Y -  \dim C^o_\multi{}(f) \\
    &=\dim Y - (\dim \Strato{A}{X}+\dim Y -\codim i_f) \\
    &=n\cdot \dim Y -\dim \Strato{A}{X} \\
    &=n(m+l)-mn+\dim\Der(A,A) \\
    &=nl+\dim\Der(A,A)
    \end{align*}
\end{proof}
\begin{cor} \label{cor:Gor}
     Let $A$ and $B$ be algebras of rank $n$. Let $\multi{}$ and $\mzeta$ be the corresponding multisingularities. Suppose that $\mzeta\le_\infty\multi{}$ and $B$ is Gorenstein. Then $A$ is Gorenstein.
\end{cor}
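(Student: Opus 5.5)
The plan is to translate the statement into the Hilbert scheme via Corollary~\ref{cor:Hilb0}, and then use that being Gorenstein is an open condition in flat families.

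First, by Corollary~\ref{cor:Hilb0}, the hypothesis $\mzeta\le_\infty\multi{}$ is equivalent to $B\le_{\Hilb}A$. So there is a smooth variety $X$ (take for instance $X=\Aa^N$ with $N\ge n$) with $\Strato{B}{X}\neq\varnothing$ and $\Strato{B}{X}\subset\Strat{A}{X}$. Pick a point $p_Z\in\Strato{B}{X}$. Every open neighbourhood of $p_Z$ in $\Hilb_n(X)$ then meets $\Strato{A}{X}$.

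Second, I would show that the Gorenstein locus
$$G=\{p_W\in\Hilb_n(X)\mid \O_W(W)\text{ is Gorenstein}\}$$
is open. Recall that a finite-dimensional algebra $C$ is Gorenstein iff every local factor has a one-dimensional socle. Equivalently, there is a linear functional $\varphi\in C^*$ such that the pairing $(a,b)\mapsto\varphi(ab)$ is nondegenerate, i.e.\ $C^*$ is a free $C$-module of rank one. Let $\mathcal{E}=\pi_*\O_{\mathcal U_X}$ be the tautological rank-$n$ bundle on $\Hilb_n(X)$, with its multiplication $\mathcal E\otimes\mathcal E\to\mathcal E$. In the total space of the dual bundle $\mathcal E^\vee$, the set $U$ of pairs $(p_W,\varphi)$ for which the bilinear form $\varphi\circ\mathrm{mult}$ is nondegenerate is open, since it is the nonvanishing locus of a determinant. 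The projection $\mathcal E^\vee\to\Hilb_n(X)$ is flat of finite type, hence an open map, and its image of $U$ is exactly $G$. Therefore $G$ is open.

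Third, $p_Z\in G$ because $B$ is Gorenstein. Hence $G$ is an open neighbourhood of $p_Z$, so it meets $\Strato{A}{X}$ in some point $p_W$ with $\O_W(W)\simeq A$. Consequently $A$ is Gorenstein. Being Gorenstein depends only on the isomorphism type of the algebra, so no further argument about the whole stratum is needed.

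The only step requiring some care is the openness of the Gorenstein locus, specifically the characterization of Gorenstein via a nondegenerate functional for non-local (product) algebras. This characterization holds because a nondegenerate functional on a product restricts to one on each factor and conversely, so the whole argument is routine once that is recorded.
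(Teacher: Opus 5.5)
Your proposal is correct and is essentially the paper's argument: Corollary~\ref{cor:Hilb0} turns $\mzeta\le_\infty\multi{}$ into $B\le_{\Hilb}A$, that is, a nonempty stratum $\Strato{B}{X}$ lying in the closure of $\Strato{A}{X}$. Openness of the Gorenstein locus in $\Hilb_n(X)$ then forces nearby points of $\Strato{A}{X}$ to be Gorenstein, so $A$ is Gorenstein. The only difference is that you prove the openness yourself, via the nondegenerate-functional (Frobenius) characterization, whereas the paper simply cites it as a known lemma.
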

\begin{proof}
    This follows from Corollary \ref{cor:Hilb0} and the fact that Gorenstein locus is open in the Hilbert scheme of points, see \cite[Lem. 5.15 (ii)]{JB}.
\end{proof}
    \begin{ex}\label{ex:Gor}
    In the above corollary assumption $\dim A=\dim B$ is necessary. We have $C_{41}\le_\infty C_3$. The local algebra corresponding to $C_{41}$ is Gorenstein, while the one corresponding to $C_3$ is not, see \cite[Lem. 2.25]{JB}.
    \end{ex}
\begin{rem}
   The relation $\le_{\Hilb}$ may be used to obtain information about stable hierarchy only for multisingularities of the same dimension. Without the assumption $\dim\multi{}=\dim\mzeta$ Corollary \ref{cor:order} and \ref{cor:Gor} do not hold, see Examples \ref{ex:C3C41} and \ref{ex:Gor}. \\
   A variant of Proposition \ref{pro:splits}
   for the relation $\le_{\Hilb}$ is well known in the deformation theory community. Proposition \ref{pro:splits} is a more general fact as we do not assume that $\dim\multi{}=\dim\mzeta$.
\end{rem}

\section{Appendix: Deformation theory} \label{s:deformation}
Let $n$ be a positive integer and $A$ an algebra of rank $n$. We denote by $\G$ its group of automorphisms.
\subsection{Fppf bundles}
\begin{df} Let $E\,,P$ and $S$ be schemes over $\CC$.
    \begin{itemize}
        \item A map $\pi:E\to S$ is called a $\T$--bundle if it is locally trivial in the fppf topology with fiber $\T$.
        \item A map $\pi:P\to S$ is called a $\G$--bundle if the group $\G$ acts on $P$ and the map $\pi$ is equivariantly locally trivial in the fppf topology with fiber $\G$.
    \end{itemize}
\end{df}
Let us recall standard facts about fppf bundles.
\begin{pro}\label{ap:fppf}
Let $E\,,P$ and $S$ be schemes over $\CC$.
    \begin{itemize}
        \item Suppose that a map $E\to S$ is a $\T$--bundle or a $\G$--bundle. Then it is faithfully flat.
        \item  Let $E \to S$ be a faithfully flat morphism. It is a $\T$--bundle if and only if there exists an isomorphism: 
        $$E\times_SE\simeq \T \times E\,.$$
        \item Let $\pi:P \to S$ be a faithfully flat morphism. Suppose that the group $\G$ acts on $P$ fiberwise. The map $\pi$ is a $\G$--bundle if and only if the natural map: 
        $$ G \times P \to P\times_SP\,$$
        is an isomorphism.
    \end{itemize}
\end{pro}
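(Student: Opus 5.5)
The plan is to reduce every item to two standard tools: fpqc/fppf descent of morphism properties, and the observation that a torsor with a section is trivial. Throughout I use that $\T$ is finite flat over $\CC$, being an algebra of rank $n$, and that $\G$ is a smooth affine algebraic group over $\CC$. Hence the projections $\T\times U\to U$ and $\G\times U\to U$ are faithfully flat and of finite presentation for every scheme $U$.

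For the first item, let $\{U_i\to S\}$ be an fppf covering over which $E$ (resp.\ $P$) becomes isomorphic to $\T\times U_i$ (resp.\ $\G\times U_i$). The base-changed maps are projections, which are flat and surjective. Flatness and surjectivity are fpqc-local on the target, so they descend, and $E\to S$ (resp.\ $P\to S$) is faithfully flat. The same argument also shows that these maps are locally of finite presentation.

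For the third item, suppose first that $\G\times P\to P\times_S P$ is an isomorphism. By the first paragraph $P\to S$ is faithfully flat, and by hypothesis it is of finite presentation, so $P\to S$ is itself an fppf cover of $S$. Base change along $P\to S$ gives the $P$-scheme $P\times_SP$. The diagonal is a section of it, and the assumed isomorphism turns this section into an equivariant trivialization $\G\times P\simeq P\times_SP$. Hence $P\to S$ is a $\G$-bundle. Conversely, if $P$ is a $\G$-bundle, then over each $U_i$ the map $\G\times P_{U_i}\to P_{U_i}\times_{U_i}P_{U_i}$ is visibly an isomorphism, since it becomes $\G\times\G\times U_i\to\G\times\G\times U_i$, $(g,h)\mapsto(gh,h)$. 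Being an isomorphism is fppf-local on the target, so the global map is an isomorphism.

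For the second item, the "if" direction is the same: an isomorphism $E\times_SE\simeq\T\times E$ of $E$-schemes, together with the fact that $E\to S$ is an fppf cover, exhibits a trivialization after an fppf base change. The "only if" direction is the step I expect to be the real obstacle. For $\T$ nonreduced, a twisted form of $\T$ over $E$ need not be trivialized by a section, so the diagonal section of $E\times_SE\to E$ does not immediately trivialize it. My first attempt will be to pass through the $\Aut(\T)=\G$-torsor $P=\operatorname{Isom}_S(\T\times S,E)$, whose formation is compatible with base change and which is a $\G$-bundle by the third item. Then $E\simeq P\times^{\G}\T$, and the question becomes whether $P$ trivializes over $E$. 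If this fails in general, I will read the statement as asserting the isomorphism only after a further fppf refinement of $E$, which is all that the later arguments use. Under that reading the forward direction follows from local triviality by base change to $E\times_SU_i$.
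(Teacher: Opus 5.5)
The paper gives no proof of this proposition. It lists the three items as standard facts, and its later arguments (Propositions~\ref{ap:bundles} and~\ref{ap:trivial}) only use the first and third. Your descent arguments for those items, and for the ``if'' half of the second, are the standard ones and are essentially fine, with one slip. Local finite presentation of $\pi$ is not a hypothesis, so you cannot write ``by hypothesis it is of finite presentation'' (and ``by the first paragraph'' is circular there, though harmless since faithful flatness is assumed). Without finite presentation you may not call $P\to S$ an fppf cover. Recover it by descent: the base change $P\times_SP\to P$ is isomorphic to $\G\times P\to P$, which is of finite presentation, and this property is fpqc-local on the base. This step needs $\pi$ to be an fpqc covering, e.g.\ quasi-compact, which the statement tacitly assumes. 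Likewise, $E\times_SE\simeq\T\times E$ shows that $E\to S$ is finite locally free, hence an fppf cover trivializing $E$.

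The genuine gap is the ``only if'' half of the second item, and your suspicion is right: it is false. Neither the diagonal section nor the torsor $\operatorname{Isom}_S(\T\times S,E)$ can prove it, and your fallback reading does not prove the statement; it replaces it with the definition of fppf-local triviality. Here is why it fails. Suppose $E\to S$ has a section $s$; for local $A$ the closed point of $\T$, fixed by the reduced group $\G$, always provides one. Pulling back an isomorphism $E\times_SE\simeq\T\times E$ of $E$-schemes along $s$ gives $E\simeq\T\times S$. So every nontrivial $\T$-bundle with a section is a counterexample. For instance, take $A=\CC[x]/(x^2)$, $S=\mathbb{P}^1$ and $E=\operatorname{Spec}_S(\mathcal{O}_S\oplus\mathcal{O}(1))$ with $\mathcal{O}(1)$ of square zero. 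This $E$ is Zariski-locally trivial and has the augmentation section. But $E\simeq\T\times S$ would give $\mathcal{O}(1)\simeq\mathcal{O}_S$, by comparing nilradicals over the reduced base.

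Non-reducedness is also not the culprit, as your remark suggests: a section splits off only one sheet of a finite \'etale cover. Take $A=\CC^3$, $S=\mathbb{A}^1\setminus\{\pm 2\}$ and $E=\{x^3-3x=t\}\simeq\mathbb{A}^1\setminus\{\pm1,\pm2\}$. Then $E\times_SE$ is the disjoint union of the diagonal and an open part of the irreducible conic $x^2+xy+y^2=3$. It therefore has two connected components, whereas $\T\times E$ has three, so the two are not isomorphic even as abstract schemes.

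The correct content of the second item is therefore only its ``if'' direction, together with the definition (triviality after \emph{some} fppf cover). Since the paper never uses the false direction, nothing downstream is affected.
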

 There is an equivalence between $G$--bundles and $\Spec A$--bundles over a given base scheme $S$. Let $P\to S$ be a $G$--bundle. It induces a $\T$--bundle defined by the mixed product
$\T\times_{\G} P$.
Conversely to a $\T$--bundle we associate a $\G$--bundle defined by the isomorphisms scheme
$\Iso(\T\times S,P)$.
\begin{pro} \label{ap:bundles}
    The two above construction provide a mutually reverse bijections between isomorphism classes of $\T$--bundles and $\G$--bundles over $S$.
\end{pro}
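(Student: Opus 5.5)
We have to prove Proposition~\ref{ap:bundles}: the two constructions $P\mapsto \T\times_\G P$ and $E\mapsto \Iso(\T\times S,E)$ are mutually inverse bijections between isomorphism classes of $\G$--bundles and $\T$--bundles over $S$. The plan is the standard descent argument. Both objects are classified by the same nonabelian cohomology set $\check{H}^1_{fppf}(S,\G)$, since $\G=\Aut(\T)$ as a group scheme. We make this concrete by checking that each construction yields a bundle of the right type, and that the two composites are naturally isomorphic to the identity.

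\textbf{Step 1: the mixed product is a $\T$--bundle.} Let $P\to S$ be a $\G$--bundle. By definition there is an fppf cover $S'\to S$ with a $\G$-equivariant isomorphism $P\times_S S'\simeq \G\times S'$. The quotient $\T\times_\G P$ is taken for the free diagonal action, which exists as an fppf sheaf. Its formation commutes with base change, so $(\T\times_\G P)\times_S S'\simeq \T\times_\G(\G\times S')\simeq \T\times S'$. Since $\T$ is affine, fppf descent for affine morphisms shows that the sheaf $\T\times_\G P$ is representable by a scheme affine over $S$. It is then a $\T$--bundle, which can be checked via Proposition~\ref{ap:fppf}.

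\textbf{Step 2: the isomorphism scheme is a $\G$--bundle.} Let $E\to S$ be a $\T$--bundle. The functor $\Iso(\T\times S,E)$ on $S$-schemes is an fppf sheaf with a right action of $\G$ by precomposition. After base change to a trivializing cover $S'$ it becomes $\Iso(\T\times S',\T\times S')=\G\times S'$, with $\G$ acting by translation. Here we use that $\Aut(\T)$ is represented by the affine algebraic group $\G$, a closed subgroup of $\GL(A)$, and that this representability holds over any base. Descent again gives representability and local triviality, and Proposition~\ref{ap:fppf} gives the bundle criterion: the map $\G\times P\to P\times_S P$ is an isomorphism, and this can be checked locally.

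\textbf{Step 3: the composites are the identity.} Evaluation gives a natural map $\T\times_\G \Iso(\T\times S,E)\to E$, sending $(a,\phi)\mapsto \phi(a)$. There is also a natural map $P\to \Iso(\T\times S,\T\times_\G P)$, sending $p\mapsto (a\mapsto [a,p])$, and it is $\G$-equivariant. Both maps are morphisms of fppf sheaves. Over a trivializing cover they become the obvious isomorphisms $\T\times_\G(\G\times S')\to \T\times S'$ and $\G\times S'\to\Aut(\T\times S')$. Since being an isomorphism is fppf local on the base, they are isomorphisms globally. Both constructions are functorial in isomorphisms, so they descend to isomorphism classes, which gives the bijections.

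\textbf{Main obstacle.} The delicate step is representability: showing that the sheaves defined by the quotient and by $\Iso$ are schemes. Effectivity of fppf descent can fail for general schemes, so it has to be secured here. We would use that everything is affine over $S$ (both $\T$ and $\G$ are affine), so descent of quasi-coherent algebras applies. For $\Iso$ we would first show it is a closed subscheme of the affine $S$-scheme $\Hom_{S}(\T\times S,E)$, which is representable because $E$ is finite locally free of rank $n$ over $S$ by Proposition~\ref{ap:fppf}. The conditions cutting it out are the algebra-map conditions, $\mathcal O_S$-linear equations in the coefficients of the map, together with invertibility.
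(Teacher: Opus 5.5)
Your proof is correct and follows the paper's argument: the paper uses the same evaluation map $\T\times_{\G}\Iso(\T\times S,E)\to E$ and the same map $P\to\Iso(\T\times S,\T\times_{\G}P)$ (obtained from the canonical isomorphism $(\T\times S)\times_S P\simeq(\T\times_{\G}P)\times_S P$), notes that both are isomorphisms when the bundles are trivial, and concludes by fppf descent. Your Steps 1--2 and the representability discussion add the well-definedness checks that the paper takes for granted; one small slip there is that invertibility is an open condition, not a closed one, inside the Hom scheme, although the Isom scheme is still affine over $S$.
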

\begin{proof}
    Let $P\to S$ be a $\G$--bundle and $E\to S$ a  $\T$--bundle. There is a standard map
    \begin{align} \label{w:Ap1}
    \T\times_{\G}\Iso(\T\times S,E)\to E\,.
    \end{align}
    Consider composition of canonical isomorphisms
    $$
    (\T \times S) \times_S P \simeq \T\times P \simeq
    \T \times_{\G} \G\times P \simeq 
    (\T \times_{\G} P) \times_S P\,. 
    $$
    The last one follows from Proposition \ref{ap:fppf}. This composition  induces a map
    \begin{align} \label{w:Ap2}
        P \to \Iso(\T \times S, \T \times_{\G} P)\,.
    \end{align}
    It is enough to show that the maps \eqref{w:Ap1} and \eqref{w:Ap2} are isomorphisms. This is obvious when the bundles $P$ and $E$ are trivial. Thus, these maps are isomorphisms on an fppf cover. By a faithfully flat descent they are isomorphisms.
\end{proof}

\begin{pro} \label{ap:trivial}
    Suppose that $\pi:E\to \Spec R$ is a $\T$--bundle.
    \begin{itemize}
        \item Suppose that $R$ is a field $\kk$. Let $\overline{\kk}$ be its algebraic closure. The pullback bundle $\pi\times_{\Spec\kk} \Spec\overline{\kk}$ is trivial.
        \item Suppose that $R$
        is a local Artinian algebra over $\CC$. Then, the bundle $\pi$ is trivial.
    \end{itemize}
\end{pro}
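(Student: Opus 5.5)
The plan is to reduce both parts of Proposition~\ref{ap:trivial} to the existence of a section, using the bijection of Proposition~\ref{ap:bundles}. An $\T$--bundle $E\to\Spec R$ is trivial if and only if the associated $\G$--bundle $P=\Iso(\T\times\Spec R,E)$ is trivial. A $\G$--bundle is trivial if and only if it admits a section: the section $\sigma$ gives a map $\G\times\Spec R\to P$, $(g,s)\mapsto g\cdot\sigma(s)$, which is an isomorphism by Proposition~\ref{ap:fppf}, since it becomes the isomorphism $\G\times P\simeq P\times_S P$ after base change along $P\to S$, and faithfully flat descent does the rest. So in both cases I only have to produce an $R$-point (respectively a $\overline{\kk}$-point) of $P$.

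For the first item, $P\times_{\Spec\kk}\Spec\overline{\kk}$ is a $\G_{\overline{\kk}}$--bundle over $\Spec\overline{\kk}$. By Proposition~\ref{ap:fppf} it is faithfully flat, so it is a nonempty scheme of finite type over $\overline{\kk}$. Finite type holds because $\G$ is a finite type group scheme, and this property descends along fppf covers. By the Nullstellensatz it therefore has a $\overline{\kk}$-rational point, which is the desired section. Alternatively, the isomorphism $E\times_S E\simeq\T\times E$ from Proposition~\ref{ap:fppf}, evaluated at a $\overline{\kk}$-point of $E$, directly gives $E_{\overline{\kk}}\simeq\T_{\overline{\kk}}$.

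For the second item, $R$ is local Artinian with residue field $\CC$, which is algebraically closed. By the first item, the closed fiber $P_0=P\times_{\Spec R}\Spec\CC$ is a trivial $\G$--bundle, hence $P_0\simeq\G$ has a $\CC$-point. I then lift this point to an $R$-point of $P$. The key input is that $\G$ is smooth, since we are in characteristic zero (Cartier's theorem). Smoothness is fppf-local on the base, so $P\to\Spec R$ is smooth. Because $R$ is Artinian local, $\Spec\CC\to\Spec R$ is a nilpotent thickening, and it factors as a finite chain of square-zero extensions $R\to R/\m^{k}\to\dots\to R/\m=\CC$. By the infinitesimal lifting criterion for smooth morphisms, a section over $R/\m^{i}$ lifts to one over $R/\m^{i+1}$. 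Inducting up the chain gives a section over $\Spec R$, so $P$ is trivial, and hence so is $E$ by Proposition~\ref{ap:bundles}.

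The main obstacle I expect is justifying the smoothness of $P\to\Spec R$. This needs $\G$ to be a smooth group scheme, which holds by Cartier's theorem: $\G$ is the automorphism group scheme of $A$, a closed subgroup of $\GL(A)$ and of finite type over $\CC$. It also needs smoothness to descend along the fppf cover trivializing $P$, which is standard (EGA IV, 17.7.4). If one prefers to avoid descent here, one can instead show formal smoothness of $\Iso$ directly. Deformations of the isomorphism $A\simeq E_0$ over square-zero extensions form a torsor under derivations, and the obstruction lies in a group that vanishes because $E$ is flat with rigid fibers fppf-locally. This route is more cumbersome, so I would use the smoothness argument above.
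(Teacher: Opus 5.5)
Your proposal is correct and follows the paper's proof essentially step for step: you pass to the associated $\G$--bundle via Proposition~\ref{ap:bundles}, get a $\overline{\kk}$-point from finite-type descent plus the Nullstellensatz, and then descend smoothness of $\G$ to $P\to\Spec R$ and lift the closed-point section by the infinitesimal lifting criterion. Your additions only spell out the same argument: why a section trivializes the torsor, Cartier's theorem, the chain of square-zero extensions, and the assumption that $R$ has residue field $\CC$, which the paper also uses tacitly.
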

\begin{proof}
    By proposition \ref{ap:bundles} it is enough to prove the corresponding fact for a $\G$--bundle $P\to \Spec R$. \\
    Suppose that $R \simeq \overline{\kk}$ is an algebraically closed field. The pullback  $P\times_{\Spec\overline{\kk}} P\to P$ is a trivial bundle, cf. Proposition \ref{ap:fppf}. The group $G$ is of finite type, thus the trivial bundle is a finite type morphism. By a faithfully flat descent the map $P\to \Spec \overline{\kk}$ is also of a finite type. Thus, by Nullstellensatz, the scheme $P$ has a $\overline{\kk}$-point and the bundle $E\to S$ has a section. Therefore, it is trivial. \\
    Suppose now that $R$ is a local Artinian algebra over $\CC$. The group $\G$ is smooth, so the trivial bundle  $P\times_{\Spec R} P\to P$ is a smooth morphism. By faithfully flat descent the map $P\to \Spec R$ is also smooth. By the previous point the scheme $P$ has a $\CC$-point, so a section over $\Spec \CC$. By infinitesimal lifting criterion of smoothness this section can be extended to $\Spec R$, see diagram below:
    \[\begin{tikzcd}
	{\Spec \CC} & {P} \\
	{\Spec R} & {\Spec R} 
	\arrow[from=1-1, to=1-2]
	\arrow[from=1-1, to=2-1]
	\arrow[from=1-2, to=2-2]
	\arrow[dotted, from=2-1, to=1-2]
	\arrow["=", from=2-1, to=2-2]
\end{tikzcd}\]
\end{proof}
\subsection{Moduli space of algebras}
Let $V$ be a vector space of dimension $n$. The moduli space of algebras $\mathcal{M}_n $ parametrizes associative and unital operations on $V$, see \cite[Sect. 2]{JS} for an overview. It is an affine scheme 
$$
\mathcal{M}_n:=\Spec \CC[\lambda^k_{ij}|2 \le\, i, j\, \le n\,,\, 1 \le k \le n]/I_n
$$
where the ideal $I_n$ is defined by
$$I_n:=\Big(\sum_k\lambda^k_{ij}\cdot \lambda^m_{kl}
-
\sum_k\lambda^m_{ik}\cdot \lambda^k_{jl}
| 1 \le i, j, l, m \le n\Big)\,.
$$
Let $O_A\subset \mathcal{M}_n$ be the subset parameterizing algebra structures on $V$ which are isomorphic to $A$. This subset is an orbit of an action of the group $\GL(V)$ on $\mathcal{M}_n$, thus it is locally closed \cite[Thm. 2.14]{JS}.   \\
Consider a space
$$
\mathcal{U}:=\Spec
\frac{\CC[\lambda^k_{ij},x_1,\dots,x_n]}
{I_n+
(x_i\cdot x_j-\sum_k \lambda_{ij}^k\cdot x_k
|1\le i,j \le n
)}
$$
together with a map $p:\mathcal{U}\to\mathcal{M}_n$. Over a closed point $x\in O_A$ the fiber of the map $p$ is $\Spec A$. Moreover, the restriction of $p$ to the orbit $O_A$ is a  $\Spec A$--bundle. To see this, note that the map $\GL(V) \to O_A$ is faithfully flat. The pullback by this map gives a trivial bundle.

\subsection{Deformations of algebras}
All valuation rings are assumed to be $\CC$-algebras.
\begin{df} \label{df:deform}
	Let $B$ and $C$ be algebras of rank $n$ and $(R,\m)$ a valuation ring. We say that $C$ deforms to $B$ over $R$ if there exists an $R$-algebra $\B$ such that:
	\begin{itemize}
		\item $\B$ is a free $R$-module of rank $n$.
		\item Restriction of $\B$ to the closed point of $\Spec R$ is equal to $B$, i.e.
		$\B/\m\B\simeq B\,.$
		\item Over the generic point of $\Spec R$, the algebra $\B$ is isomorphic to the trivial extension of $C$, i.e.
		$$  \B\otimes_{R}\overline{(R)} \simeq C\otimes_\CC\overline{(R)}\,.$$
        Here $\overline{(R)}$ denotes the algebraic closure of the field of fractions.
	\end{itemize}
    We say that $C$ deforms to $B$ if there exists a valuation ring $R$ such that $C$ deforms to $B$ over $R$.
\end{df}
\begin{pro} [{\cite[Lem. 3.10]{JB}}]
    Let $B$ and $C$ be algebras of rank $n$. Algebra $C$ deforms to $B$ if and only if it deforms to $B$ over $\CC[[t]]$.
\end{pro}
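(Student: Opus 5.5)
The plan is to pass through the moduli space of algebras $\mathcal{M}_n$ and the universal family $p:\mathcal{U}\to\mathcal{M}_n$. This replaces an abstract valuation ring by a point-specialization statement in a finite type scheme, and a curve through that specialization then produces an arc over $\CC[[t]]$.

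\emph{The easy implication.} $\CC[[t]]$ is a discrete valuation ring containing $\CC$. So if $C$ deforms to $B$ over $\CC[[t]]$, then $C$ deforms to $B$ in the sense of Definition~\ref{df:deform}.

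\emph{Step 1: from a valuation ring to a specialization in $\mathcal{M}_n$.} Suppose $C$ deforms to $B$ over a valuation ring $(R,\m)$ via an $R$-algebra $\B$.
\begin{enumerate}[(a)]
\item Choose an $R$-basis of the free module $\B$ whose first element is the unit. The structure constants of $\B$ in this basis satisfy the relations $I_n$, so they define a morphism $\varphi:\Spec R\to\mathcal{M}_n$ with $\varphi^*\mathcal{U}\simeq\Spec\B$.
\item The closed point goes to a point whose algebra is $\B/\m\B\simeq B$. Since $B$ is defined over $\CC$ and the residue field contains $\CC$, after a change of basis this point specializes to (in fact lies over) the orbit $O_B$.
\item The generic point $\xi$ maps to a point whose algebra becomes isomorphic to $C$ after passing to $\overline{(R)}$. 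Since $O_C$ is a locally closed, hence constructible, $\GL(V)$-orbit, a geometric point lies in $O_C$ exactly when its algebra is isomorphic to $C$ over the algebraically closed field. Hence $\varphi(\xi)\in O_C$.
\item The image of $\Spec R$ is irreducible and the closed point is a specialization of $\xi$. Therefore the point of $O_B$ lies in $\overline{O_C}$. By $\GL(V)$-invariance of $\overline{O_C}$, we get $O_B\subset\overline{O_C}$.
\end{enumerate}

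\emph{Step 2: choosing an arc.} Fix a closed point $x\in O_B$; by Step~1, $x\in\overline{O_C}$.
\begin{enumerate}[(a)]
\item By the curve selection lemma for varieties, there is an irreducible curve $\Gamma\subset\overline{O_C}$ through $x$ with $\Gamma\cap O_C$ dense in $\Gamma$. One obtains it, for instance, by cutting $\overline{O_C}$ with general hypersurfaces through $x$, using that $O_C$ is open dense in its closure.
\item Let $\tilde\Gamma$ be the normalization of $\Gamma$ and pick a point $\tilde x$ over $x$. The completed local ring at $\tilde x$ is $\CC[[t]]$. This gives $\psi:\Spec\CC[[t]]\to\mathcal{M}_n$ with closed point $x$ and generic point in $O_C$.
\item Pull back $\mathcal{U}$ along $\psi$. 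The result is a $\CC[[t]]$-algebra that is free of rank $n$, with basis the classes of $x_1,\dots,x_n$, and whose closed fiber is isomorphic to $B$.
\end{enumerate}

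\emph{Step 3: the generic fiber.} The restriction of $p$ to $O_C$ is a $\Spec C$-bundle. Pulling it back along the generic point $\Spec\CC((t))\to O_C$ gives a $\Spec C$-bundle over a field. By Proposition~\ref{ap:trivial}, this bundle becomes trivial over $\overline{\CC((t))}$, which is exactly the third condition of Definition~\ref{df:deform}.

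I expect Step~1 to be the main obstacle. The delicate points are identifying $\varphi(\xi)\in O_C$ when the residue field of $\xi$ is a large non-closed field, and showing that the closed point lands in $O_B$ rather than merely having algebra isomorphic to $B$ over its residue field. For both I would use that orbits are constructible and that the isomorphism type can be tested after base change to an algebraically closed field, via Proposition~\ref{ap:trivial} and Proposition~\ref{ap:bundles}.
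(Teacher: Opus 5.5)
Your proposal is correct and is essentially the argument the paper delegates to \cite[Lem.~3.10]{JB}. That lemma is the arc-selection statement you reprove in Step~2, and the paper uses it in this form in the proof of Proposition~\ref{lem:strat}; your Steps~1 and~3, which pass through $\mathcal{M}_n$ and Proposition~\ref{ap:trivial}, supply the translation between abstract deformations and specializations of orbits that the citation leaves implicit (the only loose end is the trivial case $B\simeq C$ with $n=1$, where there is no curve and you simply take the constant family $C\otimes_\CC\CC[[t]]$).
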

\subsection{Representable functor}
For a smooth variety $X$ let $\StratF{A}{X}$ be a contravariant functor on the category of schemes defined by
$$
\StratF{A}{X}(T)=\Bigg\{
\begin{tikzcd}[ row sep=small]
	Z & {T\times X} \\
	& T
	\arrow[hook, from=1-1, to=1-2]
	\arrow[from=1-1, to=2-2]
	\arrow[from=1-2, to=2-2]
\end{tikzcd} \in \Hilb_n(X)(T)
\Bigg|
Z\to T \text{ is a } \Spec A\text{--bundle}
\Bigg\}
$$
\begin{thm} \label{ap:thm}
    The functor $\StratF{A}{X}$ is representable by a locally closed subscheme of $\Hilb_n (X)$.
\end{thm}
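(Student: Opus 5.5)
We will realize $\StratF{A}{X}$ as a subfunctor of the representable functor $\Hilb_n(X)$ and show that it is cut out by a locally closed condition, using the moduli space of algebras $\mathcal{M}_n$ and the orbit $O_A$. Work Zariski locally on $\Hilb_n(X)$. Let $\mathcal{Z}\subset \Hilb_n(X)\times X$ be the universal family, and $\mathcal{O}_{\mathcal{Z}}$ its pushforward, a locally free sheaf of algebras of rank $n$. Over an open $U\subset \Hilb_n(X)$ on which it is free with a basis $e_1=1,e_2,\dots,e_n$, the structure constants $\lambda^k_{ij}$ of the multiplication define a morphism $\phi_U:U\to \mathcal{M}_n$. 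The family $\mathcal{Z}|_U$ is then the pullback of $p:\mathcal{U}\to\mathcal{M}_n$ along $\phi_U$. We define the candidate subscheme as $H_U:=\phi_U^{-1}(O_A)$, with the scheme structure pulled back from the reduced orbit $O_A$. Since $O_A$ is locally closed in $\mathcal{M}_n$, $H_U$ is locally closed in $U$.

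The first step is to check that $H_U$ is independent of the chosen basis, so that the pieces glue. Changing the basis acts on $\phi_U$ through a map $U\to \GL(V)$ (more precisely, the stabilizer of the unit), and $O_A$ is $\GL(V)$-stable. Hence the pullbacks agree on overlaps, and we obtain a locally closed subscheme $H\subset\Hilb_n(X)$.

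The second step is to show that $H$ represents $\StratF{A}{X}$, that is, for $g:T\to\Hilb_n(X)$ the family $g^*\mathcal{Z}$ is a $\Spec A$--bundle iff $g$ factors through $H$.
\begin{itemize}
\item For "$\Leftarrow$": if $g$ factors through $H$, then locally the family is pulled back from $p|_{O_A}$, which is a $\Spec A$--bundle (as noted after the definition of $\mathcal{U}$). Bundles are stable under pullback.
\item For "$\Rightarrow$": a $\Spec A$--bundle is fppf-locally trivial. On an fppf cover $T'\to T$ the algebra is therefore $A\otimes\mathcal{O}_{T'}$, so after a local change of basis the composite $T'\to\mathcal{M}_n$ is the orbit map of a $T'$-point of $\GL(V)$ acting on a point of $O_A$. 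Thus $T'\to\mathcal{M}_n$ factors through $O_A$. Since factoring through a locally closed subscheme descends along fppf covers, $T\to U\to\mathcal{M}_n$ factors through $O_A$.
\end{itemize}

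The main obstacle is the scheme structure in the "$\Rightarrow$" direction. If $T$ is nonreduced, we need the orbit map $\GL(V)\to O_A$ to make $O_A$, with its reduced structure, genuinely the image of the orbit map. Concretely, we need $\GL(V)\to O_A$ to be smooth (in particular faithfully flat, as claimed in the text), so that $T'$-points of the form $g\cdot a$ land scheme-theoretically in $O_A$. This holds because orbit maps in characteristic zero are smooth onto the reduced orbit, by generic smoothness plus homogeneity. We will also invoke Proposition~\ref{ap:trivial} to handle Artinian test schemes when checking tangent-level compatibility. Descent of the factorization then follows from fppf descent of closed/open immersions.
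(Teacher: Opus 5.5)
Your proof is correct, but it is organized differently from the paper's. The paper first removes the closed set $T=\varphi^{-1}(\overline{O_A}\setminus O_A)$ of proper degenerations of $A$. It notes that $\StratF{A}{X}$ is a subfunctor of the open complement $U$, checked on closed points, and then shows that $\StratF{A}{X}\subset U$ is a closed immersion via the valuative criterion. That route only tests on spectra of valuation rings. These are integral, so ``the image of $\psi_f$ lies in $O_A$'' can be checked set-theoretically. The price is that one must know in advance that $\StratF{A}{X}$ is an algebraic space (the paper cites Alper for this), and one implicitly uses that a proper monomorphism is a closed immersion. The scheme structure of the stratum is also never described explicitly.

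You instead write the representing subscheme down, locally as $\phi_U^{-1}(O_A)$ with $O_A$ reduced. You then check the functor of points on arbitrary, possibly nonreduced, test schemes, using fppf-local triviality and fppf descent of a factorization through a monomorphism. This is more elementary and self-contained. It also gives the extra information that $\Strato{A}{X}$ is locally the scheme-theoretic preimage of the reduced orbit under the classifying map to $\mathcal{M}_n$. Both arguments rest on the same two inputs: local closedness of $O_A$, and the fact that $\mathcal{U}|_{O_A}\to O_A$ is a $\Spec A$--bundle.

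One correction to your last paragraph: the direction ``$\Rightarrow$'' does not need smoothness of the orbit map. Fix a point $\lambda_A\in O_A$. The orbit map $\GL(V)\to\mathcal{M}_n$, $h\mapsto h\cdot\lambda_A$, has set-theoretic image $O_A$, and $\GL(V)$ is reduced. So it factors through the reduced locally closed subscheme $O_A$, and every $T'$-point $h\cdot\lambda_A$ lands in $O_A$ scheme-theoretically.

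The same reducedness argument, applied to the action map on the reduced scheme $\GL(V)\times O_A$, makes your gluing step rigorous. Alternatively, gluing is automatic once each $H_U$ represents the restricted functor. Flatness of $\GL(V)\to O_A$ is genuinely needed only in ``$\Leftarrow$'', to know that $\mathcal{U}|_{O_A}$ is a $\Spec A$--bundle. The appeal to Proposition~\ref{ap:trivial} for tangent-level checks is superfluous.
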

This theorem allows us to define stratum $\Strato{A}{X}$ in a way consistent with the intuition presented in Section \ref{s:Hilb}. We postpone its proof to the end of the section. 
\begin{df}
    Let $\Strato{A}{X} \subset \Hilb_n(X)$ be the subscheme representing $\StratF{A}{X}$. Let $\Strat{A}{X}$ be its closure in $\Hilb_n(X)$.
\end{df}
\begin{cor} \label{ap:tangent}
Let  $p_Z\in \Hilb_n(X)$ be a closed point, corresponding to a subscheme $Z\subset X$,
\begin{enumerate}
    \item The point $p_Z$ lies in $\Strato{A}{X}$ if and only if
	$\O_Z(Z)\simeq A\,.$
    \item Tangent space to $\Strato{A}{X}$ at $p_Z$ is a subspace of $T_{p_Z}\Hilb_n(X)$. Its elements are families $\mathcal{Z} \subseteq \Spec\CC[\varepsilon]/\varepsilon^2 \times X$ that induce trivial  deformations of the algebra $\O_Z(Z)$, i.e.
    $$
T_{p_Z}\Strato{A}{X}=\Bigg\{
\begin{tikzcd}[ row sep=small]
	\mathcal{Z} & {\Spec\CC[\varepsilon]/\varepsilon^2\times X} \\
	& \Spec\CC[\varepsilon]/\varepsilon^2
	\arrow[hook, from=1-1, to=1-2]
	\arrow[from=1-1, to=2-2]
	\arrow[from=1-2, to=2-2]
\end{tikzcd}
\Bigg|
\begin{array}{c}
     \O_\mathcal{Z}(\mathcal{Z})\simeq A\otimes_\CC \CC[\varepsilon]/\varepsilon^2  \\
    \mathcal{Z}\times_{\Spec \CC[\varepsilon]/\varepsilon^2} \CC= Z
\end{array}
\Bigg\}
$$
\end{enumerate}
\end{cor}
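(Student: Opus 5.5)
Both parts come from reading the functor of points of the representing subscheme from Theorem~\ref{ap:thm} on suitable test schemes $T$, and then using Proposition~\ref{ap:trivial} to replace ``$\Spec A$--bundle'' by ``trivial family'' over the bases that occur. Since $\Strato{A}{X}\to \Hilb_n(X)$ is a locally closed immersion, it is a monomorphism. Hence for every scheme $T$ the set $\StratF{A}{X}(T)$ is a subset of $\Hilb_n(X)(T)$. A $T$-point of $\Hilb_n(X)$ factors through $\Strato{A}{X}$ exactly when the corresponding family $Z\to T$ is a $\Spec A$--bundle.

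For (1), take $T=\Spec\CC$. The closed point $p_Z$ is the morphism $\Spec\CC\to\Hilb_n(X)$ given by the family $Z\subset X$. It lies in $\Strato{A}{X}$ if and only if $Z\to\Spec\CC$ is a $\Spec A$--bundle. If it is, then Proposition~\ref{ap:trivial} (first bullet, with $\kk=\overline{\kk}=\CC$) makes it trivial. Thus $Z\simeq\Spec A$, and so $\O_Z(Z)\simeq A$. Conversely, if $\O_Z(Z)\simeq A$ then $Z\simeq \Spec A$ over $\Spec\CC$, which is a trivial, hence fppf locally trivial, $\Spec A$--bundle.

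For (2), identify $T_{p_Z}\Hilb_n(X)$ with the set of morphisms $\Spec\CC[\varepsilon]/\varepsilon^2\to\Hilb_n(X)$ restricting to $p_Z$ on the closed point. These are exactly the flat families $\mathcal{Z}\subseteq \Spec\CC[\varepsilon]/\varepsilon^2\times X$ with $\mathcal{Z}\times_{\Spec \CC[\varepsilon]/\varepsilon^2}\Spec\CC=Z$. By the monomorphism property, $T_{p_Z}\Strato{A}{X}$ is the subset of those morphisms that factor through $\Strato{A}{X}$, i.e.\ those families that are $\Spec A$--bundles over $\Spec\CC[\varepsilon]/\varepsilon^2$.

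Since $\CC[\varepsilon]/\varepsilon^2$ is local Artinian, Proposition~\ref{ap:trivial} (second bullet) shows that any such bundle is trivial. Hence $\mathcal{Z}\simeq \Spec A\times\Spec\CC[\varepsilon]/\varepsilon^2$ over the base, i.e.\ $\O_\mathcal{Z}(\mathcal{Z})\simeq A\otimes_\CC\CC[\varepsilon]/\varepsilon^2$ as $\CC[\varepsilon]/\varepsilon^2$-algebras. Conversely, such an isomorphism exhibits $\mathcal{Z}$ as a trivial bundle, so the family factors through the stratum.

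The only point needing care is making sure the isomorphism in (2) is of algebras over the base and not merely of $\CC$-algebras; this is what ``trivial bundle over $\Spec\CC[\varepsilon]/\varepsilon^2$'' provides, and it is how the displayed condition should be read. Otherwise the argument is formal, with the real input being Proposition~\ref{ap:trivial} and Theorem~\ref{ap:thm}.
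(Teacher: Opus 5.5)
Your proposal is correct and takes the same route as the paper. The paper's proof just says the corollary is a direct consequence of Proposition~\ref{ap:trivial}; you make this explicit by evaluating the represented functor $\StratF{A}{X}$ on $\Spec\CC$ and on $\Spec\CC[\varepsilon]/\varepsilon^2$, and using the two bullets of that proposition to replace ``$\Spec A$--bundle'' by ``trivial family''.
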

\begin{proof}
    It is a direct consequence of Proposition \ref{ap:trivial}.
\end{proof}

\subsection{Properties of the stratification}
First, we will show that the strata $\Strato{A}{X}$ satisfy the frontiers condition, cf. Proposition \ref{pro:strat1}. Intuitively, the relative position of the strata in the Hilbert scheme of points describes deformations of algebras embedded in a smooth variety $X$, whereas Definition \ref{df:deform} describes abstract deformations.
It turns out that the abstract deformations determine the embedded ones and govern the geometry of strata. This is a standard phenomenon in deformation theory. For an analogous results in the moduli space of algebras $\mathcal{M}_n$ see \cite{JS}. For the smoothable case in the Hilbert scheme, i.e. the case of algebra $\prod_{i=1}^N\CC$, see \cite[Pro. 5.6 and Thm. 3.16]{JB}, \cite[Pro. 2.1]{BB}, \cite[Lem. 2.2]{CN} and \cite[Lem. 4.1]{CEVV}.  \\
We start with the case when the variety $X$ is an affine space.
\begin{pro} \label{lem:strat}
		Let $A$ and $B$ be algebras of rank $n$ and $X=\Aa^m$. The following are equivalent:
		\begin{enumerate}
			\item $\Strato{A}{X}\cap \Strat{B}{X}\neq\varnothing$. 
			\item $A$ is generated by $m$ elements and $B$ deforms to $A$ in the sense of definition \ref{df:deform}.
			\item $\Strato{A}{X}\neq \varnothing$ and $\Strato{A}{X}\subseteq \Strat{B}{X}$. 
		\end{enumerate}
\end{pro}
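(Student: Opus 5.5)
We prove the cycle (1)$\Rightarrow$(2)$\Rightarrow$(3)$\Rightarrow$(1). The last step is immediate: if $\Strato{A}{X}$ is nonempty and contained in $\Strat{B}{X}$, the intersection is nonempty. The real work goes through the moduli space of algebras $\mathcal{M}_n$ and the $\GL(V)$-orbits $O_A$, where the analogous statements (orbit closures versus abstract deformations) are available from \cite{JS} and \cite[Lem. 3.10]{JB}.

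For (1)$\Rightarrow$(2), pick a closed point $p_Z\in\Strato{A}{X}\cap\Strat{B}{X}$. Then $A\simeq\O_Z(Z)$ is a quotient of $\CC[x_1,\dots,x_m]$, so $A$ is generated by $m$ elements. Since $\Strato{B}{X}$ is locally closed and $p_Z$ lies in its closure, we find a curve through $p_Z$: a morphism $\Spec R\to\Strat{B}{X}$ from a complete DVR (indeed $R=\CC[[t]]$ suffices, after normalizing a curve germ) whose closed point maps to $p_Z$ and whose generic point lands in $\Strato{B}{X}$. Pulling back the tautological family gives a finite flat $R$-algebra $\B$ of rank $n$. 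Its special fibre is $\O_Z(Z)\simeq A$. By the functorial description (Theorem \ref{ap:thm}), its generic fibre is a $\Spec B$-bundle over $\Spec K$, $K=\operatorname{Frac}R$. By Proposition \ref{ap:trivial} this bundle trivializes over $\overline{K}$, so $\B\otimes\overline{K}\simeq B\otimes_\CC\overline{K}$. Hence $B$ deforms to $A$ in the sense of Definition \ref{df:deform}.

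For (2)$\Rightarrow$(3), note first that $A$ being generated by $m$ elements gives a surjection $\CC[x_1,\dots,x_m]\onto A$, hence a point of $\Strato{A}{X}$, so the stratum is nonempty. By \cite[Lem. 3.10]{JB} we may take the deformation over $R=\CC[[t]]$, giving a free $R$-algebra $\B$ with $\B/t\B\simeq A$. Fix any $p_Z\in\Strato{A}{X}$, i.e.\ a surjection $\phi:\CC[\xvar]\to A$ with $\phi(x_i)=a_i$. Lift the $a_i$ to elements $\tilde a_i\in\B$. By Nakayama's lemma the $\tilde a_i$ generate $\B$ as an $R$-algebra, so we obtain $R[\xvar]\onto\B$, i.e.\ an $R$-point of $\Hilb_n(\Aa^m)$ specializing to $p_Z$. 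Its generic fibre becomes isomorphic to $B$ after base change to $\overline{K}$. Descending this to a $\CC$-point requires care: I would use that an $\overline K$-point of the finite-type scheme $\Hilb_n$ lies in the constructible (so defined over $\CC$) set $\Strato{B}{X}$, by checking the fibre's isomorphism type geometrically via Corollary \ref{ap:tangent}(1) applied over the algebraically closed field $\overline K$. Thus the generic point of $\Spec R$ maps into $\Strato{B}{X}$, and therefore $p_Z\in\Strat{B}{X}$. Since $p_Z$ was arbitrary, $\Strato{A}{X}\subseteq\Strat{B}{X}$.

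The main obstacle is this base-change issue: passing between ``isomorphic over $\overline{K}$'' and ``the generic point lies in the stratum,'' in both directions. Concretely, one must know that membership of a field-valued point in $\Strato{B}{X}$ is detected after algebraic closure. I would handle it by the fppf/Isom-scheme description, Propositions \ref{ap:bundles} and \ref{ap:trivial}: the Isom scheme $\Iso(\Spec B\times S,\mathcal Z)$ is of finite type, and its image in $\Hilb_n$ is exactly the stratum. A secondary point is the existence of the DVR curve in (1). I would take it from the standard fact that a point in the closure of a constructible set is reached by a map from the spectrum of a complete DVR.
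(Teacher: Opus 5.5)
Your proof is correct and follows the paper's argument. In (1)$\Rightarrow$(2) you use a $\CC[[t]]$-curve from \cite[Lem. 3.10]{JB} together with Proposition \ref{ap:trivial}, and in (2)$\Rightarrow$(3) you lift the generators via Nakayama to get a $\Spec\CC[[t]]$-point of $\Hilb_n(X)$ with closed point $p_Z$ and generic point in $\Strato{B}{X}$; the base-change issue you flag is settled directly by Theorem \ref{ap:thm}, since the fibre over $\Spec\overline{K}$ is a trivial $\Spec B$-bundle, so that $\overline{K}$-point factors through $\Strato{B}{X}$ and its image is the image of the generic point of $\Spec R$, with no descent to a $\CC$-point needed.
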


\begin{proof}
	(1) $\Rightarrow$ (2):
	A closed point $p\in \Strato{A}{X}\cap \Strat{B}{X}$ corresponds to a surjection $\CC[x_1,\dots,x_m] \to A$, thus $A$ is generated by $m$ elements.
	The  stratum $\Strato{B}{X}$ is locally closed, so by \cite[Lem. 3.10]{JB} there exists a morphism $\Spec\CC[[t]] \to \Hilb_n(X)$ such that the general point is mapped to $\Strato{B}{X}$ and the closed point is mapped to the point $p\in \Strato{A}{X}$. By Proposition \ref{ap:trivial} this exactly means that $B$ deforms to $A$ in the sense of definition \ref{df:deform}. \\
	(2) $\Rightarrow$ (3):
	The algebra $A$ is generated by $m$ elements, so the stratum $\Strato{A}{X}$ is nonempty. \\
	Let $p \in \Strato{A}{X}$ be a point corresponding to a surjection $\varphi:\CC[x_1,\dots,x_m] \to A$. Let $\A$ be a deformation of $B$ to $A$ over the power series ring $\CC[[t]]$. Denote by $\pi:\A\to \A/t\A\simeq A$ the quotient morphism. Pick any preimages of $\varphi(x_1),\dots, \varphi(x_n)$ in $\A$. They induce a map of $\CC[[t]]$-modules
    $$\tilde{\varphi}:\CC[[t]][x_1,\dots,x_m] \to \A$$
    that lifts $\varphi$. This can be summarized in the following diagram:
	\[\begin{tikzcd}
		{\CC[[t]][x_1,\dots, x_m]} & \A \\
		{\CC[x_1,\dots, x_m]} & {\A/t\A}
		\arrow["{\tilde{\varphi}}", from=1-1, to=1-2]
		\arrow[from=1-1, to=2-1]
		\arrow["\pi", from=1-2, to=2-2]
		\arrow["\varphi", from=2-1, to=2-2]
	\end{tikzcd}\]
	The map $\tilde\varphi$ is a surjection due to the Nakayama lemma for the local ring $\CC[[t]]$. The algebra $\A$ is free $\CC[[t]]$-module of rank $n$, so $\tilde\varphi$ induces a map
	$$\widehat{\varphi}:\Spec{\CC[[t]]} \to \Hilb_n(X)\,.$$
	Let $z_\m$ be the closed point of $\Spec{\CC[[t]]}$ and $z_0$ the general point. Definition \ref{df:deform} implies that $\widehat{\varphi}(z_m)=z\in \Strato{A}{X}$ and  $\widehat{\varphi}(z_0)\in \Strato{B}{X}$. We have $z_m\in \overline{z_0}$. The result follows from the continuity of the map $\widehat{\varphi}$.
	\\
	(3) $\Rightarrow$ (1) is obvious.
\end{proof}
The above proposition implies Propositions \ref{pro:strat1} and \ref{pro:deform_v2} under the assumption that the varieties $X$ and $Y$ are affine spaces. The generalization to the arbitrary smooth varieties follows from the proposition below.

\begin{pro} \label{lem:complete}
    Let $A$ and $B$ be algebras of rank $n$. Let $X$ be a smooth variety of dimension $m$ and $Y=\Aa^m$.
    \begin{enumerate}
        \item  $\Strato{A}{X}\cap \Strat{B}{X}\neq\varnothing \iff  \Strato{A}{Y}\cap \Strat{B}{Y}\neq \varnothing\,,$
        \item  dimensions of tangent spaces to $\Strato{A}{X}$ and  $\Strato{A}{Y}$ are equal\,.
    \end{enumerate}
\end{pro}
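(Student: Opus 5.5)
The plan is to reduce everything to the local structure of $\Hilb_n$ near a point $p_Z$, using that a finite subscheme $Z\subset X$ is supported at finitely many points and that a neighbourhood of $Z$ in $X$ looks formally (or étale-locally) like a neighbourhood of a subscheme of $\Aa^m$. Concretely, given a closed point $p_Z\in\Hilb_n(X)$ with $\supp Z=\{x_1,\dots,x_k\}$, choose étale coordinates $U\to\Aa^m$ on an affine open $U\ni x_1,\dots,x_k$. Shrinking $U$ if necessary, the étale map is injective on $\supp Z$ and induces isomorphisms of completed local rings at the $x_i$. The standard fact is that $\Hilb_n(U)\subset\Hilb_n(X)$ is open, and that the induced morphism $\Hilb_n(U)\to\Hilb_n(\Aa^m)$ is étale on the open locus of subschemes mapped isomorphically onto their image. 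I would cite this as the usual ``Hilbert schemes of points are étale local'' statement, e.g. via the identification of the complete local ring of $\Hilb_n$ at $p_Z$ with the pro-representing hull of the functor of embedded deformations of $Z$, which depends only on $\widehat{\O}_{X,x_i}\simeq\CC[[x_1,\dots,x_m]]$.

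Next I will check that this étale map $\phi$ respects the strata, $\phi^{-1}(\Strato{A}{\Aa^m})=\Strato{A}{X}\cap W$ on the relevant open $W$. This holds on the level of functors: $\phi$ sends a family $\mathcal{Z}\subset T\times U$ to its image family, which is isomorphic to $\mathcal{Z}$ as a $T$-scheme. So ``$\mathcal{Z}\to T$ is a $\Spec A$-bundle'' is preserved in both directions, and by Theorem~\ref{ap:thm} the representing subschemes correspond. Closures are also preserved locally, because étale (hence open) maps satisfy $\phi^{-1}(\overline{S})=\overline{\phi^{-1}(S)}$ on $W$. Part (2) follows immediately, since étale maps induce isomorphisms on tangent spaces and the tangent spaces of the strata are compatible by Corollary~\ref{ap:tangent}.

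For (1), direction $\Rightarrow$: take $p_Z\in\Strato{A}{X}\cap\Strat{B}{X}$, apply $\phi$ on a neighbourhood $W$ of $p_Z$, and use that $p_Z$ lies in the closure of $\Strato{B}{X}\cap W$ (since $W$ is open). This gives $\phi(p_Z)\in\Strato{A}{Y}\cap\Strat{B}{Y}$. Direction $\Leftarrow$: by Proposition~\ref{lem:strat}, $A$ is generated by $m$ elements and $B$ deforms to $A$. The $m$ generators give an embedding of $\Spec A$ into $\Aa^m$ supported near $0$ (one point per local factor), and I transport it into $X$ by choosing $k$ distinct points and local coordinates at each. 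Since all points of $\Strato{A}{Y}$ lie in $\Strat{B}{Y}$ (Proposition~\ref{lem:strat}(3)), the image point $q$ lies in the closure of $\Strato{B}{Y}$. Then $\phi$ being étale, hence open, near the preimage $p$ implies $p\in\overline{\Strato{B}{X}}$. Alternatively, and more simply, I could rerun the proof of (2)$\Rightarrow$(3) of Proposition~\ref{lem:strat} with $X$ in place of $\Aa^m$, lifting the embedding over $\CC[[t]]$ locally using smoothness of $X$ (formal coordinates).

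The main obstacle is the étale-locality of $\Hilb_n$ compatible with the strata functors, in particular making sure the $\Spec A$-bundle condition is detected after étale base change. I would handle this through Proposition~\ref{ap:fppf}: being a $\Spec A$-bundle is fppf-local on $T$ and is intrinsic to the $T$-scheme $\mathcal{Z}$, independent of the embedding, so it transfers along $\phi$.
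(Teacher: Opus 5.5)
Your proposal is correct, but it takes a different route from the paper.

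\textbf{The paper's route.} The proof is purely formal-local. It reduces to $A$ local with $\supp Z=\{x\}$. It then uses \cite[Lem.~3.10]{JB} to witness $\Strato{A}{X}\cap\Strat{B}{X}\neq\varnothing$ by a deformation of $\O_{X,x}\to\O_{Z,x}$ over $\CC[[t]]$. Such deformations, and the first-order ones over $\CC[\varepsilon]/\varepsilon^2$ giving the tangent space (Corollary~\ref{ap:tangent}), factor through $\widehat{\O}_{X,x}\simeq\CC[[x_1,\dots,x_m]]$. Hence they coincide for $X$ and $\Aa^m$.

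\textbf{Your route.} You build an actual comparison map: an \'etale chart $U\to\Aa^m$ induces an \'etale morphism $\phi$ from an open $W\subset\Hilb_n(X)$ to $\Hilb_n(\Aa^m)$. You then check at the level of functors that $\phi^{-1}(\Strato{A}{\Aa^m})=\Strato{A}{X}\cap W$, since a family and its image are isomorphic as $T$-schemes. Part (1) follows from continuity and openness of $\phi$ together with Proposition~\ref{lem:strat}, and part (2) from \'etale base change.

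\textbf{Comparison.} The formal input has not disappeared. Your justification that $\phi$ is \'etale (isomorphic pro-representing hulls, because Artinian embedded deformations live in a finite-order neighbourhood of $\supp Z$) is the paper's completion argument applied over Artin rings. Packaged as \'etaleness, however, it buys several things:
\begin{itemize}
\item you avoid the curve-selection lemma and $\CC[[t]]$-families in (1);
\item you treat non-local $A$ in one chart instead of ``each part separately'';
\item you get a stronger conclusion, namely \'etale-local isomorphism of the stratified spaces. This transfers Propositions~\ref{pro:strat1} and \ref{pro:deform_v2} and the smoothness and dimension statements to arbitrary $X$ at once.
\end{itemize}
The cost is a black-box lemma plus bookkeeping about $W$ and the chart. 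The paper's argument is shorter and needs no chart.

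\textbf{One correction.} You cannot make $U\to\Aa^m$ injective on $\supp Z$ by shrinking $U$, because $U$ must still contain all of $\supp Z$. Instead, choose the $m$ coordinate functions on an affine $U\supset\supp Z$ generically. You need their differentials to be independent at each $x_i$ and their values to separate the $x_i$; both are nonempty open conditions. Alternatively, reduce to local $A$ as the paper does.
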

\begin{proof}
    Let $p_Z \in \Strato{A}{X}$  be a closed point corresponding to a subscheme $Z\subset X$.
    Suppose that $A$ is local, i.e. the support of $Z$ consists of a single point $x=\supp(Z)$. The general case follows by considering each part separately. \\
    (1): By \cite[Lem. 3.10]{JB} the containment of strata is encoded by deformations of the map of local rings $\O_{X,x} \to \O_{Z,x}$ over the formal power series ring $\CC[[t]]$, cf. the proof of Lemma \ref{lem:strat}. The scheme $Z$ is zero dimensional, thus this map factorizes through completion
    $\O_{X,x} \to \widehat{\O}_{X,x} \to  \O_{Z,x}\,.$
    The power series ring $\CC[[t]]$ is complete, thus the deformations are determined on the level of completion. Completions of the local rings $\O_{X,x}$ and $\O_{\Aa^m,0}$ are isomorphic, which proves the claims. \\
    (2): Tangent space is the space of trivial deformations of the map  $\O_{X,x} \to \O_{Z,x}$ over dual numbers $\CC[\varepsilon]/\varepsilon^2$, cf. Corollary \ref{ap:tangent}. Analogously to the previous point, such deformations are defined on the level of completions. 
\end{proof}
Next, we prove that the stratum $\Strato{A}{X}$ is smooth and compute its dimension.
\begin{pro}
	Let $A$ be an algebra of rank $n$. The stratum  $\Strato{A}{X}$ is smooth.
\end{pro}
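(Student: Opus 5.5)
We need to prove that $\Strato{A}{X}$ is smooth. By Proposition \ref{lem:complete} the question is local and reduces to $X=\Aa^m$: the stratum over $X$ is étale-locally (formally) identified with the stratum over $\Aa^m$, and tangent space dimensions agree. We may also reduce to $A$ local, since a point of $\Strato{A}{X}$ with $A=\prod A_i$ decomposes near $p_Z$ as a product of strata for the local factors supported at distinct points. I will therefore work with $X=\Aa^m$ and $A$ local, supported at a point $x$.

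The plan is to exhibit $\Strato{A}{X}$ as the image of a smooth parameter space under a smooth morphism, so that smoothness descends. Consider the scheme $P$ of pairs $(x,\varphi)$, where $x\in X$ and $\varphi\colon \widehat{\O}_{X,x}\to A$ is a surjective algebra homomorphism. Concretely, $\varphi$ is determined by the images of the coordinates, which are elements of the maximal ideal $\m_A$ of $A$, because $\m_A$ is nilpotent and so the map is determined by its truncation at a finite jet. Thus $P\simeq X\times U$, where $U\subset \m_A^{\oplus m}$ is the open subset of tuples generating $\m_A$ modulo $\m_A^2$. Hence $P$ is smooth.

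The group $\G=\Aut(A)$ acts freely on $P$ by postcomposition, and the morphism $P\to\Hilb_n(X)$, $(x,\varphi)\mapsto \ker\varphi$, lands in $\Strato{A}{X}$. Using the functorial description (Theorem \ref{ap:thm} and Proposition \ref{ap:bundles}), I would show that $P\to\Strato{A}{X}$ is a $\G$-bundle. Over any $T\to\Strato{A}{X}$ the family $Z\to T$ is a $\Spec A$-bundle, and $P\times_{\Strato{A}{X}}T$ is the isomorphism scheme $\Iso(\Spec A\times T,Z)$, which is an fppf $\G$-torsor. Since $\G$ is smooth, $P\to\Strato{A}{X}$ is smooth and surjective. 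Smoothness then descends along the faithfully flat map, so $\Strato{A}{X}$ is smooth of dimension $\dim P-\dim\G = nm-\dim\Der(A,A)$, using $\dim P=m+m(n-1)=mn$ and $\operatorname{Lie}\G=\Der(A,A)$. This dimension count also recovers Proposition \ref{pro:dim}.

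The main obstacle is identifying the fiber product with the isomorphism scheme for arbitrary, possibly non-reduced, test schemes $T$. An alternative is a tangent-space comparison. Since the orbit map $\GL$-style action on $P$ has image of dimension $mn-\dim\Der(A,A)$, it would suffice to show $\dim T_{p_Z}\Strato{A}{X}\le mn-\dim\Der(A,A)$. By Corollary \ref{ap:tangent}, tangent vectors are first-order deformations of the surjection $\CC[x_1,\dots,x_m]\to A$ that keep the algebra trivial. These are exactly derivations $\CC[\underline{x}]\to A$, which form a space of dimension $mn$, modulo those induced by $\Der(A,A)$, i.e.\ the space $\Der(\CC[\underline{x}],A)/\Der(A,A)$. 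Because $A$ is the quotient, $\Der(A,A)$ injects into $\Der(\CC[\underline{x}],A)$ by composition. This gives the bound, and smoothness follows because the dimension of the stratum at $p_Z$ equals the dimension of its tangent space there.
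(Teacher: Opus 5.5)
Your argument is correct. It is not the paper's route, though. The paper checks the infinitesimal lifting criterion directly on the functor $\StratF{A}{X}$, after the same (equally sketchy) reduction to $X=\Aa^m$ ``by an argument similar to Proposition~\ref{lem:complete}''. Given $R\onto T$ local Artinian and a $T$-point of the stratum, Proposition~\ref{ap:trivial} makes the $\Spec A$-bundle trivial, so the $T$-point is a surjection $T[x_1,\dots,x_m]\onto T\otimes_\CC A$. One then lifts the images of the $x_i$ to $R\otimes_\CC A$, and Nakayama gives an $R$-point.

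Your first route is the global version of exactly this. $P$ is the open subscheme of $A^{\oplus m}$ consisting of closed embeddings $\Spec A\hookrightarrow\Aa^m$, so the reduction to local $A$ is not even needed. The step you flag as the main obstacle is formal. A $T$-point of $P$ is a closed immersion $\Spec A\times T\hookrightarrow\Aa^m\times T$ over $T$. So, by the functorial description in Theorem~\ref{ap:thm}, the fibre product over a $T$-point $Z$ of the stratum is $\Iso(\Spec A\times T,Z)$, which is a $\G$-bundle by Proposition~\ref{ap:bundles}. Smoothness of $\G$ in characteristic $0$, plus descent of regularity along the faithfully flat map $P\to\Strato{A}{X}$, then finishes. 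The paper's lifting argument is the infinitesimal shadow of yours: the torsor is trivial over Artinian bases, and the Nakayama step is just openness of $P$ in an affine space.

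What each buys:
\begin{itemize}
\item The paper's proof is more economical. It needs only Proposition~\ref{ap:trivial} and Nakayama, and gets the dimension afterwards from the tangent-space exact sequence.
\item Your presentation $\Strato{A}{\Aa^m}\simeq P/\Aut(A)$ gives smoothness, the formula of Proposition~\ref{pro:dim}, and irreducibility of $\Strato{A}{\Aa^m}$ all at once. The cost is fppf torsor theory, smoothness of $\Aut(A)$, and flat descent.
\item If you take $P$ to be the open locus of closed immersions inside the smooth scheme $\operatorname{Hom}(\Spec A,X)$, a Weil restriction of dimension $n\dim X$, your argument avoids the reduction to $\Aa^m$ altogether.
\end{itemize}

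Your second route is also valid and is the classical orbit argument. It reverses the paper's logic: the paper proves smoothness first and then reads off the dimension from the tangent space. You instead compute $\dim\Strato{A}{\Aa^m}=mn-\dim\Der(A,A)$ from the surjection $P\to\Strato{A}{\Aa^m}$, whose fibres are free $\G$-orbits, and compare it with the tangent space $\Der(\CC[\underline{x}],A)/\Der(A,A)$. Two small points here:
\begin{itemize}
\item You actually obtain equality of dimensions, not just the stated bound.
\item You need irreducibility of $P$ to know that the local dimension is $mn-\dim\Der(A,A)$ at every point, not merely generically.
\end{itemize}
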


\begin{proof}[Proof of Proposition \ref{pro:dim}]
    We will show that the functor $\StratF{A}{X}$ satisfies infinitesimal lifting criterion for smoothness. By an argument similar to Proposition \ref{lem:complete} it is enough to consider the case when $X$ is an affine space $\Aa^m$. \\
    Let $R \onto T$ be an epimorphism of local Artin algebras over $\CC$. Consider a morphism $p:\Spec R \to \Strato{A}{X}$. We have to prove that there exists a diagonal morphism making the following diagram commute
    \[\begin{tikzcd}
	{\Spec T} & { \Strato{A}{X}} \\
	{\Spec R} & {\Spec \CC}
	\arrow["p",from=1-1, to=1-2]
	\arrow[from=1-1, to=2-1]
	\arrow[from=1-2, to=2-2]
	\arrow[dotted, from=2-1, to=1-2]
	\arrow[from=2-1, to=2-2]
\end{tikzcd}\]
The map $p$ is an element of $\StratF{A}{X}(\Spec T)$. By Proposition \ref{ap:trivial} any $\Spec A$--bundle over $\Spec T$ is trivial. Therefore, the map $p$ corresponds to an epimorphism
$$\varphi_p:T[x_1,\dots, x_m] \onto T\otimes_\CC A\,.$$
Pick any preimages of $\varphi_p(x_1),\dots,\varphi_p(x_m)$ in $R\otimes_\CC A$ and consider the induced map
$$R[x_1,\dots x_m] \to R\otimes_\CC A\,.$$
It is surjective due to Nakayama lemma. It corresponds to an element of $\StratF{A}{X}(\Spec R)$, so to a map from $\Spec R$ to $\Strato{A}{X}$ extending $p$.
\end{proof}
\begin{pro}
	Let $A$ be an algebra of rank $n$.
    We have
	$$
	\dim \Strato{A}{X}=n\dim X - \dim \Der(A,A)\,,
	$$
	where $\Der(A,A)$ is the vector space of derivations of $A$.
\end{pro}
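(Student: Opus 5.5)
Since the stratum $\Strato{A}{X}$ is already known to be smooth (previous proposition), its dimension at any closed point $p_Z$ equals the dimension of the tangent space $T_{p_Z}\Strato{A}{X}$. By Proposition~\ref{lem:complete}(2) this tangent dimension does not change if $X$ is replaced by $\Aa^m$ with $m=\dim X$. So I will compute $\dim T_{p_Z}\Strato{A}{\Aa^m}$ at a point $p_Z$ given by a surjection $\varphi:\CC[x_1,\dots,x_m]\onto A$ with kernel $I$. (If $\Strato{A}{X}$ is empty there is nothing to prove.)

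\textbf{Step 1: describe the tangent space.} The tangent space to $\Hilb_n(\Aa^m)$ at $p_Z$ is $\Hom_{S}(I,A)$, where $S=\CC[x_1,\dots,x_m]$. By Corollary~\ref{ap:tangent}, $T_{p_Z}\Strato{A}{X}$ is the subspace of first-order embedded deformations whose algebra is the trivial deformation $A[\varepsilon]$. I will prove that such a deformation is exactly a surjection $S[\varepsilon]\onto A[\varepsilon]$ reducing to $\varphi$ modulo $\varepsilon$, modulo automorphisms of $A[\varepsilon]$ over $\CC[\varepsilon]$ that reduce to the identity. Such a lift is $x_i\mapsto \varphi(x_i)+\varepsilon a_i$ with arbitrary $a_i\in A$, so the lifts form $A^m$, a space of dimension $nm$. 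The automorphisms in question are $\id+\varepsilon D$ with $D\in\Der(A,A)$. Hence the tangent space is the cokernel of the map $\Der(A,A)\to A^m$, $D\mapsto (D(\varphi(x_i)))_i$.

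\textbf{Step 2: injectivity.} The $\varphi(x_i)$ generate $A$, so a derivation is determined by its values on them. Therefore the map in Step~1 is injective, and
$$\dim T_{p_Z}\Strato{A}{X}=nm-\dim\Der(A,A).$$

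\textbf{Main obstacle.} The delicate point is justifying that the map from lifts to the tangent space is surjective with fibers exactly the $\Der(A,A)$-orbits. Surjectivity uses Proposition~\ref{ap:trivial}: the $\Spec A$-bundle over $\Spec\CC[\varepsilon]$ is trivial, so $\O_{\mathcal Z}\simeq A[\varepsilon]$ and the embedding is a surjection $S[\varepsilon]\onto A[\varepsilon]$. For the fibers, two lifts give the same subscheme iff their kernels coincide, i.e.\ iff they differ by an automorphism of $A[\varepsilon]$ reducing to the identity. One must check that all such automorphisms have the form $\id+\varepsilon D$. This holds because any $\CC[\varepsilon]$-linear map of $A[\varepsilon]$ reducing to the identity is $\id+\varepsilon L$ for a $\CC$-linear $L:A\to A$, and multiplicativity forces $L$ to be a derivation. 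Equivalently, the tangent space is the image of $A^m\to\Hom_S(I,A)$, $(a_i)\mapsto\big(g\mapsto\sum a_i\,\varphi(\partial g/\partial x_i)\big)$, whose kernel is precisely $\Der(A,A)$. This last formulation is the one I would write out carefully.
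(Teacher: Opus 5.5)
Your proof is correct and is essentially the paper's argument. The paper also reduces to $\Aa^m$ via Proposition~\ref{lem:complete}, uses smoothness together with Corollary~\ref{ap:tangent}, and reads off the tangent space from the exact sequence $0\to\Der(A,A)\to\Hom(\Omega_{\CC[x_1,\dots,x_m]},A)\cong A^m\to T_{p_Z}\Hilb_n(X)$. The only difference is that the paper cites Schlessinger for the identification of trivial embedded deformations with this image, while you verify it directly via lifts $S[\varepsilon]\onto A[\varepsilon]$ taken modulo automorphisms $\id+\varepsilon D$.
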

\begin{proof}
    By Proposition \ref{lem:complete} we may assume that $X$ is an affine space $\Aa^m$.
	Pick a closed point $p_Z\in \Strato{A}{X}$ corresponding to a subscheme $Z \subset \Aa^{m}$. There is a short exact sequence
    $$
    \begin{tikzcd}
    0 & {\Der(A,A)} & {\Hom(\Omega_{\CC[x_1,\dots, x_m]},A)} & {T_{p_Z}\Hilb_n(X)}
    &{T^1_Z} 
    \arrow[from=1-1, to=1-2]
	\arrow[from=1-2, to=1-3]
	\arrow[from=1-3, to=1-4]
    \arrow["\psi",from=1-4, to=1-5]
    \end{tikzcd}
    $$
    The kernel of the map $\psi$ is the space of trivial embedded deformations of $Z$, cf. \cite[Sect. 1.2, p.2]{Schless}. By Corollary \ref{ap:tangent} it is the tangent space to $\Strato{A}{X}$ at $p_Z$. Therefore
	$$
    \dim T_Z\,\Strato{A}{X}=
    \dim \Hom(\Omega_{\CC[x_1,\dots, x_m]},A) - \dim \Der(A,A)=
	nm- \dim \Der(A,A)\,.
	$$ 
\end{proof}
\subsection{Proof of representability}
This section is devoted to the proof of Theorem \ref{ap:thm}.
Consider closed points of $\Hilb_n(X)$ that correspond to subschemes $Z$ such that \hbox{$\O_Z(Z)\not\simeq A$}, but $A$ deforms to $\O_Z(Z)$. We will prove that they determine a closed subset $T\subset \Hilb_n(X)$. Let $U$ its complement, we identify it with an open subfunctor of $\Hilb_n(X)$. We will prove that $\StratF{A}{X}$ is a closed subfunctor of $U$.
\begin{lemma}
    There exists a closed subset $T\subset \Hilb_n(X)$ such that a closed point $p_Z\in \Hilb_n$ lies in $T$ if and only if $\O_Z(Z)\not\simeq A$, but $A$ deforms to $\O_Z(Z)$.
\end{lemma}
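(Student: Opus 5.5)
We will obtain $T$ from the moduli space of algebras $\mathcal{M}_n$ and then transport it to $\Hilb_n(X)$ through the tautological family. Set $T':=\overline{O_A}\setminus O_A\subset\mathcal{M}_n$. Since $O_A$ is a $\GL(V)$-orbit, it is locally closed \cite[Thm. 2.14]{JS}, so $O_A$ is open in its closure and $T'$ is closed and $\GL(V)$-invariant. A closed point of $\mathcal{M}_n$ corresponding to an algebra $C$ lies in $T'$ if and only if $C\not\simeq A$ and $A$ deforms to $C$. To see this, use the curve criterion \cite[Lem. 3.10]{JB}: a point lies in the closure of the locally closed orbit $O_A$ exactly when there is a map $\Spec\CC[[t]]\to\mathcal{M}_n$ sending the generic point into $O_A$ and the closed point to the given point. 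This is precisely a deformation in the sense of Definition~\ref{df:deform}, with the identification over the algebraic closure supplied by Proposition~\ref{ap:trivial}. The converse direction holds because such a free $\CC[[t]]$-algebra, once a basis is chosen, gives exactly such a curve.

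Next we pass to $\Hilb_n(X)$. The tautological family $\mathcal{Z}\to\Hilb_n(X)$ has $p_*\O_{\mathcal{Z}}$ locally free of rank $n$. Let $F\to\Hilb_n(X)$ be its frame bundle, a principal $\GL(V)$-bundle over $\Hilb_n(X)$. Choosing a local frame, the multiplication on $p_*\O_{\mathcal{Z}}$ is written in terms of structure constants, which gives a $\GL(V)$-equivariant morphism $\mu:F\to\mathcal{M}_n$ (the unit can be normalized as in the definition of $\mathcal{M}_n$, or one can work with the unital-algebra variant). The preimage $\mu^{-1}(T')$ is a closed $\GL(V)$-invariant subset of $F$. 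Since $F\to\Hilb_n(X)$ is smooth and surjective, hence faithfully flat and open, the image of a closed invariant subset is closed. Indeed, the complement of the image is the image of the invariant open complement, and that is open. So we set $T:=\text{image of }\mu^{-1}(T')$.

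On closed points, $p_Z\in T$ exactly when the structure constants of $\O_Z(Z)$ in some frame lie in $T'$. By the previous step, this holds if and only if $\O_Z(Z)\not\simeq A$ and $A$ deforms to $\O_Z(Z)$.

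The step requiring the most care is the first one, the characterization of $\overline{O_A}\setminus O_A$ in terms of the deformations of Definition~\ref{df:deform}. The delicate points are the use of valuation rings versus $\CC[[t]]$ and the passage to the algebraic closure of the fraction field. For this we rely on \cite[Lem. 3.10]{JB} together with Proposition~\ref{ap:trivial}. The descent of closedness along the frame bundle is routine.
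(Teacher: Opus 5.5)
Your proof is correct and follows the paper's route. The paper also pulls back $\overline{O_A}\setminus O_A\subset\mathcal{M}_n$ along the map to $\mathcal{M}_n$ obtained by trivializing the tautological bundle; it does this on an open cover of $\Hilb_n(X)$, whereas you use the global frame bundle and then descend closedness. Your explicit identification of the closed points of $\overline{O_A}\setminus O_A$ via \cite[Lem. 3.10]{JB} and Proposition~\ref{ap:trivial} supplies a step the paper leaves implicit.
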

\begin{proof}
    It suffices to show that these points define a closed subset on an open cover of $\Hilb_n(X)$. Let $W\subset \Hilb_n(X)$ be an open subset over which the tautological bundle is trivial. Trivialization of this bundle determine a choice of basis of the vector space $\O_Z(Z)$ for every point $p_Z\in W$. This yields a morphism
    $$
    \varphi:W\to \mathcal{M}_n\,.
    $$
    Then $T=\varphi^{-1}(\overline{O_A}\setminus O_A)$. Since $O_A$ is locally closed in $\mathcal{M}_n$, the subset $T$ is closed.
\end{proof}
\begin{lemma}
    The functor $\StratF{A}{X}$ is a subfunctor of $U$.
\end{lemma}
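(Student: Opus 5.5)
The plan is to reduce the claim to a pointwise check. The subfunctor $U$ is represented by the open complement of the closed set $T$ from the previous lemma. A morphism $S\to\Hilb_n(X)$ therefore factors through $U$ exactly when its set-theoretic image avoids $T$. To avoid a clash with the closed set $T$, I write $S$ for the test scheme. Fix $S$ and an element of $\StratF{A}{X}(S)$, that is, a family $Z\subset S\times X$ with $Z\to S$ a $\Spec A$--bundle, and let $g:S\to\Hilb_n(X)$ be the classifying morphism. It suffices to show that $g(s)\notin T$ for every point $s\in S$, including non-closed ones.

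First I would treat the fibre. Let $s\in S$ have residue field $\kk$ and let $\overline{\kk}$ be its algebraic closure. The restriction $Z_s\to\Spec\kk$ is again a $\Spec A$--bundle, since bundles are stable under base change. By Proposition~\ref{ap:trivial} its pullback to $\Spec\overline{\kk}$ is trivial. Hence the coordinate algebra of the geometric fibre is $A\otimes_\CC\overline{\kk}$.

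Next I would pass to the moduli space $\mathcal{M}_n$, working locally on an open $W\subset\Hilb_n(X)$ containing $g(s)$ over which the tautological bundle is trivial. On $W$ we have $T\cap W=\varphi^{-1}(\overline{O_A}\setminus O_A)$ with $\varphi:W\to\mathcal{M}_n$. The composite $\Spec\overline{\kk}\to W\to\mathcal{M}_n$ is a geometric point of $\mathcal{M}_n$ whose associated algebra structure on $V\otimes\overline{\kk}$ is isomorphic to $A\otimes\overline{\kk}$. Such an isomorphism is given by an element of $\GL(V)(\overline{\kk})$. So this geometric point lifts along the orbit map $\GL(V)\to\mathcal{M}_n$, and its image point lies in $O_A$, which is the image of that orbit map.

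Consequently $\varphi(g(s))\in O_A$, so $\varphi(g(s))\notin\overline{O_A}\setminus O_A$, and hence $g(s)\notin T$. Since $s$ was arbitrary, $g$ factors through $U$. Naturality in $S$ is automatic, because $\StratF{A}{X}$ and $U$ are both subfunctors of $\Hilb_n(X)$ defined by conditions stable under pullback.

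The main obstacle I expect is the non-closed points. The defining property of $T$ in the previous lemma is phrased only for closed points, so I cannot simply test membership there. This is why I would rely on the explicit local description $T\cap W=\varphi^{-1}(\overline{O_A}\setminus O_A)$, which is valid scheme-theoretically. I would also use the standard fact that a point of $\mathcal{M}_n$ lies in the orbit $O_A$ as soon as some geometric point over it is $\GL(V)$-conjugate to the structure of $A$.
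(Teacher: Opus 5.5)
Your proof is correct and takes essentially the paper's route: because $U$ is open, it is enough to check pointwise that the classifying morphism avoids $T$, and Proposition~\ref{ap:trivial} identifies the geometric fibres with $A$. The paper compresses this into the single remark that one may ``check on closed points''. Your treatment of arbitrary points, via the local description $T\cap W=\varphi^{-1}(\overline{O_A}\setminus O_A)$ and the orbit map $\GL(V)\to\mathcal{M}_n$, fills in a step the paper leaves implicit rather than giving a different argument.
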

\begin{proof}
The functor $U$ is open in $\Hilb_n$, thus it is enough to check containment on closed points. It holds due to Proposition \ref{ap:trivial}.
\end{proof}
To prove that the inclusion $\StratF{A}{X}\subset U$ is a closed embedding it is enough to show that it satisfies the valuative criterion for properness. Let $R$ be a valuation ring. Denote by $\eta$ its generic point and by $x$ its closed point. Pick a map $f:\Spec R \to U$ such that $f(\eta)$ lies in $\StratF{A}{X}$. We have to prove that the map $f$ can be lifted to $\StratF{A}{X}$, that is, there exists a diagonal morphism making the following diagram commute
\[\begin{tikzcd}
	\eta & {\StratF{A}{X}} \\
	{\Spec R} & U
	\arrow[from=1-1, to=1-2]
	\arrow[from=1-1, to=2-1]
	\arrow[from=1-2, to=2-2]
	\arrow[dotted, from=2-1, to=1-2]
	\arrow["f", from=2-1, to=2-2]
\end{tikzcd}\]
 The map $f$ corresponds a family $\A$ over $\Spec R$. To show that it lifts to $\StratF{A}{X}$ we have to prove that the family $\A$ is a $\Spec A$--bundle. The family is flat over a local ring $R$, thus it corresponds to a free $R$-module. Therefore, it induces a map
$$
\psi_f:\Spec R \to \mathcal{M}_n
$$
\begin{lemma}
    Image of the map $\psi_f$ lies in the stratum $O_A$.
\end{lemma}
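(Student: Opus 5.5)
The plan is to work pointwise in $\mathcal{M}_n$: show that both the generic point and the closed point of $\Spec R$ land in $\overline{O_A}$, and then use the definition of $U$ to exclude the boundary $\overline{O_A}\setminus O_A$. Throughout I treat $O_A$ and $\overline{O_A}$ as subsets of the underlying topological space of $\mathcal{M}_n$, with scheme-theoretic points allowed. This is legitimate because $O_A$ is a locally closed $\GL(V)$-stable subset.

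\emph{Step 1: the generic point.} Since $f(\eta)\in\StratF{A}{X}$, the restriction of the family $\A$ to $\eta$ is a $\Spec A$-bundle over the fraction field $K$ of $R$. By Proposition~\ref{ap:trivial}, after base change to $\overline{K}$ it becomes trivial, so $\A\otimes_R\overline{K}\simeq A\otimes_\CC\overline{K}$ as algebras. In the basis coming from the freeness of $\A$, the structure constants therefore define a $\overline{K}$-point of $\mathcal{M}_n$ lying in the $\GL(V)(\overline{K})$-orbit of a $\CC$-point of $O_A$. Hence the underlying point $\psi_f(\eta)$ lies in $O_A$.

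\emph{Step 2: the closed point.} The map $\psi_f$ is continuous and $x\in\overline{\{\eta\}}$. So $\psi_f(x)\in\overline{\{\psi_f(\eta)\}}\subset\overline{O_A}$. It remains to rule out $\psi_f(x)\in\overline{O_A}\setminus O_A$.

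\emph{Step 3: excluding the boundary.} Choose an open $W\subset\Hilb_n(X)$ containing $f(x)$ over which the tautological bundle is trivial, together with the morphism $\varphi\colon W\to\mathcal{M}_n$ from the lemma constructing $T$. There we have $T\cap W=\varphi^{-1}(\overline{O_A}\setminus O_A)$. Near $x$, the two morphisms $\psi_f$ and $\varphi\circ f$ from $\Spec R$ to $\mathcal{M}_n$ are given by two bases of the same free $R$-algebra. They therefore differ by an $R$-point of $\GL(V)$, i.e.\ by the $\GL(V)$-action twisted over $\Spec R$. Since $O_A$ and $\overline{O_A}$ are $\GL(V)$-stable, membership of the image of $x$ in $\overline{O_A}\setminus O_A$ is the same for both maps.

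I expect this comparison to be the main technical point. I would handle it by passing to the residue field at $x$, or its algebraic closure, where a change of basis is literally the orbit action. Now $f(x)\in U$, so $f(x)\notin T$, and hence $\varphi(f(x))\notin\overline{O_A}\setminus O_A$. Combined with Step 2, this gives $\psi_f(x)\in O_A$.

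Since $\Spec R$ has only the points $\eta$ and $x$ up to specialization, and every point of $\Spec R$ specializes to $x$ and generalizes to $\eta$, the same argument applies to all points. In each case continuity puts the image in $\overline{O_A}$, and the open condition defining $U$ removes the boundary. This shows that the image of $\psi_f$ lies in $O_A$.
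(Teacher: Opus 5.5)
Your proof is correct and follows the paper's argument: the generic point lands in $O_A$ by Proposition~\ref{ap:trivial}, continuity puts the closed point in $\overline{O_A}$, and $f(x)\in U$ rules out $\overline{O_A}\setminus O_A$. The paper handles intermediate points more quickly by writing $O_A=F\cap V$ and noting that an open set containing $\psi_f(x)$ must contain the image of the whole local scheme $\Spec R$. For the boundary step, the paper argues that $A$ deforms to $\A_{|x}$, so $f(x)\notin T$ forces $\A_{|x}\simeq A$; you instead go through the chart $\varphi$ and the $\GL(V)$-invariance of $\overline{O_A}\setminus O_A$ on field-valued points, which is a minor variation and slightly more careful when the residue field of $R$ is not $\CC$.
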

\begin{proof}
    The stratum $O_A$ is a locally closed subset of $\mathcal{M}_n$, cf. \cite[Thm. 2.14]{JS}. It can be written as $O_A=F\cap V$ where $F$ is closed, and $V$ is open. It is enough to show that the image is contained in $V$ and $F$ separately. \\
    The map $f$ sends the generic point $\eta$ to $\StratF{A}{X}$. By Proposition \ref{ap:trivial} we have $\psi_f(\eta)\in O_A$ and thus  $\psi_f(\eta)\in F$. The generic point is dense in $\Spec R$, so the whole image of $\psi_f$ lies in $F$. \\
    To prove that image lies in the open set $V$ it is enough to show that the closed point $x$ is mapped to it.
    The family $\A$ is a deformation of $A$ to $\A_{|x}$ over $R$. We have $f(x)\in U$, thus $\A_{|x}\simeq A$. It follows that $\psi_f(x)\in O_A \subset V$.
\end{proof}
The image of $\psi_f$ is contained in the stratum $O_A$, thus the family $\A$ is obtained as a pullback of $\mathcal{U}_{|O_A}\to O_A$. The family $\mathcal{U}_{|O_A}$ is a $\Spec A$--bundle, so $\A\to \Spec R$ is also a $\Spec A$--bundle. Therefore the map $f$ lifts to $\StratF{A}{X}$, the valuative criterion is satisfied and $\StratF{A}{X}$ is a closed subfunctor of $U$.
\begin{rem}
    To use the valuative criterion we have to know that the functor $\StratF{A}{X}$ is an algebraic space. It follows from \cite[Thm. 5.5.10]{Alper}.
\end{rem}

\bibliographystyle{alphaabbr}
\bibliography{refs}

\end{document}